\newcommand{\bS}{{\mathbb S}}  
\newcommand{\R}{{\mathbb R}}       
\newcommand{\HH}{{\mathcal H}}
\newcommand{\PP}{{\mathcal P}}
\newcommand\eqn[1]{\eqref{e:#1}} 
\def\bB{\mathbb{B}}
\def\d{\partial}
\def\ext{\textrm{ext}}
\def\bB{\mathbb{B}}
\newcommand{\RR}{{\mathcal R}}
\newcommand{\MM}{{\mathcal M}}
\newcommand{\NN}{{\mathcal N}}
\newcommand{\EE}{{\mathcal E}}
\newcommand{\diam}{\mathop{\rm diam}}
\newcommand{\dist}{{\rm dist}}
\newcommand{\ds}{\displaystyle }
\newcommand{\fiproof}{{\hspace*{\fill} $\square$ \vspace{2pt}}}
\newcommand{\rf}[1]{{(\ref{#1})}}
\newcommand{\supp}{\operatorname{supp}}
\newcommand{\vphi}{{\varphi}}
\newcommand{\ve}{{\varepsilon}}
\newcommand{\vv}{{\vspace{2mm}}}
\newcommand{\vvv}{\vspace{4mm}}
\newcommand{\wt}[1]{{\widetilde{#1}}}
\newcommand{\noi}{\noindent}
\newcommand{\dv}{\mathop\mathrm{div }}			
\def\Xint#1{\mathchoice
{\XXint\displaystyle\textstyle{#1}}%
{\XXint\textstyle\scriptstyle{#1}}%
{\XXint\scriptstyle\scriptscriptstyle{#1}}%
{\XXint\scriptscriptstyle\scriptscriptstyle{#1}}%
\!\int}
\def\XXint#1#2#3{{\setbox0=\hbox{$#1{#2#3}{\int}$ }
\vcenter{\hbox{$#2#3$ }}\kern-.58\wd0}}
\def\avint{\Xint-}
\newtheorem{theorem}{Theorem}[section]
\newtheorem{lemma}[theorem]{Lemma}
\newtheorem{coro}[theorem]{Corollary}
\newtheorem{propo}[theorem]{Proposition}
\newtheorem*{lemma*}{Lemma}
\newtheorem*{theorema}{Theorem A}
\newtheorem*{theoremb}{Theorem B}
\theoremstyle{definition}
\newtheorem{definition}[theorem]{Definition}
\theoremstyle{remark}
\newtheorem{rem}[theorem]{\bf Remark}
\numberwithin{equation}{section}
\newcommand{\brem}{\begin{rem}}
\newcommand{\erem}{\end{rem}}
\newcommand\Lemma[1]{Lemma \ref{l:#1}}
\def\loc{{\textrm loc}}
\def\VMO{{\textrm{VMO}}}
\begin{document}

\title[The one-phase problem for harmonic measure]{The one-phase problem for harmonic measure in two-sided NTA domains}


\author[Azzam]{Jonas Azzam}
\address{Jonas Azzam
\\
Departament de Matem\`atiques
\\
Universitat Aut\`onoma de Barcelona
\\
Edifici C Facultat de Ci\`encies
\\
08193 Bellaterra (Barcelona), Catalonia
}
\email{jazzam@mat.uab.cat}

\author[Mourgoglou]{Mihalis Mourgoglou}

\address{Mihalis Mourgoglou
\\
Departament de Matem\`atiques\\
 Universitat Aut\`onoma de Barcelona
\\
Edifici C Facultat de Ci\`encies
\\
08193 Bellaterra (Barcelona), Catalonia
}
\email{mourgoglou@mat.uab.cat}

\author[Tolsa]{Xavier Tolsa}
\address{Xavier Tolsa\\
ICREA,  Universitat Aut\`onoma de Barcelona, and BGSMath \\  Departament de Matem\`atiques
\\
Edifici C Facultat de Ci\`encies
\\
08193 Bellaterra (Barcelona), Catalonia}
\email{xtolsa@mat.uab.cat}

\subjclass[2010]{31A15,28A75,28A78}
\thanks{The three authors were supported by the ERC grant 320501 of the European Research Council (FP7/2007-2013). X.T. was also supported by 2014-SGR-75 (Catalonia), MTM2013-44304-P (Spain), and by the Marie Curie ITN MAnET (FP7-607647).}

\begin{abstract}
We show that if $\Omega\subset\R^3$ is a two-sided NTA domain with AD-regular boundary such that the logarithm of the Poisson kernel belongs to $\textrm{VMO}(\sigma)$, where $\sigma$ is the surface measure of $\Omega$, then the outer unit normal to $\partial\Omega$ belongs to $\textrm{VMO}(\sigma)$ too.
The analogous result fails for dimensions larger than $3$. This answers a question posed by Kenig and Toro and also by Bortz and Hofmann.
\end{abstract}

\maketitle

\section{Introduction}

In this paper we study a one-phase free boundary problem in connection with the Poisson kernel. The study of this type of problems was initiated by Alt and Caffarelli in their pioneering work \cite{Alt-Caffarelli}, where they showed that for a Reifenberg flat domain 
with $n$-AD-regular boundary in $\R^{n+1}$, if the logarithm of the Poisson kernel is in $C^\alpha$ for some $\alpha>0$, then the domain is of class $C^{1,\beta}$ for some $\beta>0$.
Later on, Jerison \cite{Jerison} showed that, in fact, one can take $\beta=\alpha$. In the works \cite{KT97}, \cite{Kenig-Toro-annals} and \cite{Kenig-Toro2},
Kenig and Toro considered the endpoint case of the logarithm of the Poisson kernel being in VMO, and they obtained the following remarkable result:

\begin{theorema}[\cite{Kenig-Toro2}]\label{thm:KTannal}
Suppose $\Omega\subset \mathbb{R}^{n+1}$ is a $\delta$-Reifenberg flat chord-arc domain for some $\delta>0$ small enough. Denote by $\sigma$ the surface measure of $\Omega$ and by
$h$ the Poisson kernel with a pole in $\Omega$ if $\Omega$ is bounded or with the pole at infinity if $\Omega$ is unbounded. Then 
$\log h\in \textrm{VMO}(\sigma)$ if and only if 
the outer unit normal $\vec n$ to $\partial\Omega$ is in $\textrm{VMO}(\sigma)$. 
\end{theorema}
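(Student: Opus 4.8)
\medskip
\noindent\emph{Proof strategy.} The plan is to derive both implications from a single rigidity statement about \emph{global} (unbounded) solutions, via a compactness/blow-up argument. The starting point is the characterization of $\VMO$ by triviality of blow-ups: a function $f\in L^1_{\loc}(\sigma)$ lies in $\VMO(\sigma)$ if and only if for every sequence of points $x_j\in\partial\Omega$ and radii $r_j\to 0$ the renormalized functions $f_j:=f(x_j+r_j\,\cdot\,)$, viewed on the rescaled boundaries $\Sigma_j:=r_j^{-1}(\partial\Omega-x_j)$, converge (after extracting a subsequence and using $n$-AD-regularity to take a Hausdorff limit $\Sigma_\infty$ of the $\Sigma_j$) to a constant on $\Sigma_\infty$. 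Applying this to $f=\log h$ and to $f=\vec n$, Theorem~A reduces to the assertion that, along every blow-up, the Poisson kernel is asymptotically constant if and only if the boundary is asymptotically a hyperplane.

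The first step is to show that the class of $\delta$-Reifenberg flat chord-arc domains is stable under blow-up: if $x_j\in\partial\Omega$, $r_j\to0$, then along a subsequence $\Omega_j:=r_j^{-1}(\Omega-x_j)$ and its complement converge in the local Hausdorff sense to a pair $(\Omega_\infty,\R^{n+1}\setminus\overline{\Omega_\infty})$ with $\Omega_\infty$ an \emph{unbounded} $\delta$-Reifenberg flat chord-arc domain with the same constants; moreover the renormalized surface measures $\sigma_j$ converge weakly-$*$ to $\HH^n\rest\partial\Omega_\infty$, the renormalized harmonic measures $\omega_j$ converge weakly-$*$ to the harmonic measure at infinity $\omega_\infty$ of $\Omega_\infty$, and the Green functions converge locally uniformly to the Green function at infinity $u_\infty$. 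This rests on the scale invariance of the two-sided corkscrew and Harnack chain conditions, on the uniformity (in the NTA constants) of the boundary Harnack principle and the H\"older continuity of quotients of positive harmonic functions vanishing on the boundary, and on the non-degeneracy/doubling of harmonic measure in chord-arc domains, which prevents loss of mass in the weak limit.

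The crux is the rigidity theorem for the limit objects: \emph{if $\Omega_\infty$ is an unbounded $\delta$-Reifenberg flat chord-arc domain whose Poisson kernel at infinity is constant, i.e.\ $\omega_\infty=c\,\HH^n\rest\partial\Omega_\infty$, then $\Omega_\infty$ is a half-space}; conversely, the Poisson kernel at infinity of a half-space is a constant multiple of surface measure. I would prove the hard direction by showing $u_\infty$ is linear. The hypothesis $\omega_\infty=c\,\HH^n\rest\partial\Omega_\infty$ encodes, via the representation of harmonic measure, that the normal derivative of $u_\infty$ equals the constant $c$ on $\partial\Omega_\infty$; together with $u_\infty\ge 0$ harmonic in $\Omega_\infty$ and vanishing on $\partial\Omega_\infty$, this is an overdetermined one-phase Bernoulli problem on a global domain. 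I would then invoke an appropriate scale-invariant monotonicity formula of Weiss--Alt--Caffarelli type: the associated energy $r\mapsto W(r,u_\infty)$ is monotone, and using Reifenberg flatness both as $r\to0$ at $0\in\partial\Omega_\infty$ and as $r\to\infty$ its two limits coincide with the half-space value, so $W$ is constant; this forces $u_\infty$ to be homogeneous of degree $1$, and a nonnegative $1$-homogeneous harmonic function vanishing on the (connected, Reifenberg-flat) boundary of an unbounded domain must be linear, whence $\Omega_\infty$ is a half-space.

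Granting this, Theorem~A follows by assembling the pieces. If $\log h\in\VMO(\sigma)$, then every blow-up has $\log h_j\to$ const in $\mathrm{BMO}$, so $\omega_\infty$ is a constant multiple of $\HH^n\rest\partial\Omega_\infty$ for every blow-up, hence every $\Omega_\infty$ is a half-space, hence every blow-up of $\vec n$ is constant, i.e.\ $\vec n\in\VMO(\sigma)$. Conversely, if $\vec n\in\VMO(\sigma)$ then every blow-up domain is a half-space, so (by the easy direction of the rigidity theorem) $\omega_\infty$ is a constant multiple of $\HH^n\rest\partial\Omega_\infty$ in every blow-up; using that harmonic measure and $\sigma$ are mutually absolutely continuous with uniform $A_\infty$ characteristic in chord-arc domains and that this characteristic improves to ``asymptotically $1$'' along blow-ups whose limit measures agree up to a constant, one upgrades this to $\log h_j\to$ const in $\mathrm{BMO}$, i.e.\ $\log h\in\VMO(\sigma)$. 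The main obstacle I anticipate is the rigidity theorem of the third paragraph: making the boundary condition $\partial_n u_\infty=c$ meaningful and the monotonicity formula well-defined, finite, and with the correct limiting values requires quantitative Reifenberg flatness at \emph{all} scales together with nontrivial boundary regularity for NTA domains; a secondary difficulty is the no-mass-loss convergence of harmonic measures under Hausdorff convergence and the careful bookkeeping of normalizations (pole in $\Omega$ versus pole at infinity) needed to pass between ``trivial blow-ups'' and ``$\VMO$''.
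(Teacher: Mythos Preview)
Your high-level strategy---reduce to a rigidity statement for the blow-up limit $\Omega_\infty$---is correct and matches the Kenig--Toro approach that the paper cites and adapts.  The compactness step (pseudo-blow-ups converge to an unbounded $\delta$-Reifenberg flat chord-arc domain with Poisson kernel at infinity identically~$1$) is exactly what is carried out in Section~6 here, and the conclusion that $\vec n\in\VMO(\sigma)$ once every blow-up is a half-space is exactly Section~9.

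The gap is in your rigidity argument.  You propose to show $\Omega_\infty$ is a half-space via the Weiss monotonicity formula, asserting that the limits of $W(r,u_\infty)$ as $r\to 0$ and $r\to\infty$ both equal the half-space value ``by Reifenberg flatness''.  But $\delta$-Reifenberg flatness only says $\Theta_{\partial\Omega_\infty}(0,r)<\delta$ for every $r$; it does \emph{not} give that blow-ups or blow-downs of $\Omega_\infty$ are half-spaces, only that they are $\delta$-close to half-spaces at every scale.  So you cannot conclude $W$ is constant, and your argument is circular.  What Kenig--Toro actually do (and what the paper records as Lemma~\ref{lemac23}, from \cite{Kenig-Toro3}) is invoke the Alt--Caffarelli improvement-of-flatness mechanism directly: once $\frac{d\omega_\infty}{d\sigma_\infty}\equiv 1$ and $\Theta_{\partial\Omega_\infty}(0,r)\le\delta_0$ for all $r>1$, an iteration of Lemmas~\ref{l:ktlemma1} and~\ref{l:ktlemma2} forces $\Theta_{\partial\Omega_\infty}(0,s)\to 0$, hence $\partial\Omega_\infty$ is a plane.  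The Reifenberg hypothesis supplies the initial flatness at all large scales, so Weiss is unnecessary here.  (The paper \emph{does} use Weiss, but only for its main Theorem~\ref{teo}, where Reifenberg flatness is absent and one must first manufacture large-scale flatness by showing blow-downs of $\Omega_\infty$ are cones, then half-spaces in $\R^3$; see Section~7.)

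For the converse direction $\vec n\in\VMO\Rightarrow\log h\in\VMO$, your sketch is too thin: knowing that every blow-up domain is a half-space and that $\omega_\infty=c\,\sigma_\infty$ does not, via a soft $A_\infty$ argument, yield convergence of $\log h_j$ to a constant in BMO.  This direction is proved in \cite{KT97} and \cite{Kenig-Toro-annals} by different, more quantitative methods (Rellich-type identities relating the tangential and normal parts of $\nabla u$ on $\partial\Omega$), not by the blow-up scheme you outline.
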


A domain $\Omega\subset \mathbb{R}^{n+1}$ is called {\it chord-arc} if it is an NTA domain with $n$-AD-regular boundary. Its Poisson kernel with pole at $p\in\Omega$ equals $h=\frac{d\omega^p}{d\sigma}$, where $\omega^p$ stands for the harmonic measure of $\Omega$ with pole at $p$. 
For the definitions of Reifenberg flatness, NTA, and VMO, we refer the reader to the preliminaries section. 

We also remark that, in fact, Kenig and Toro proved in \cite{Kenig-Toro2} a slightly weaker statement that the one in Theorem \ref{thm:KTannal}. Indeed, instead of
showing that when $\log h\in \textrm{VMO}(\sigma) $ then
the outer unit normal $\vec n$ to $\partial\Omega$ is in $\textrm{VMO}(\sigma)$, they proved that $\vec n$ belongs to $\textrm{VMO}_{loc}(\sigma)$ (which coincides with $\textrm{VMO}(\sigma)$ when $\Omega$ is bounded). However, as we explain in Remark \ref{rem***}, a minor modification of their arguments in \cite{Kenig-Toro2} proves the full statement above in Theorem \ref{thm:KTannal}.

Without the Reifenberg flatness assumption and just assuming the NTA condition, the conclusion of the theorem above need not hold: in \cite[Proposition 3.1]{Kenig-Toro-annals}, Kenig and Toro showed that for the Kowalski-Preiss cone $\Omega=\{(x,y,z,w):x^{2}+y^{2}+z^{2}>w^{2}\}\subset \mathbb{R}^{4}$, the harmonic measure with pole at infinity coincides with the surface measure modulo a constant factor, and thus 
has $\log h\in \textrm{VMO}(\sigma)$, even though the outer unit normal is not in $\textrm{VMO}(\sigma)$. In fact, a similar conical example in $\R^3$ was shown previously by Alt and Caffarelli in \cite[Section 2.7]{Alt-Caffarelli}.

It was conjectured by Kenig and Toro \cite{Kenig-Toro} and Bortz and Hofmann
\cite{BH} that, instead of the Reifenberg flatness assumption,  being a two-sided chord-arc domain should be enough for the implication $\log h\in \textrm{VMO}(\sigma) \Rightarrow 
\vec n\in \textrm{VMO}(\sigma)$.  
By a two-sided chord-arc domain we mean a chord-arc domain such that its exterior is also connected and chord-arc. Kenig and Toro showed that this holds under the additional assumption that the logarithm of the Poisson kernel of  the exterior domain is also in $\textrm{VMO}(\sigma)$. Their precise result reads as follows:

\begin{theoremb}[{\cite[Corollary 5.2]{Kenig-Toro}}]\label{t:kt2} Let $\Omega$ be a two-sided chord-arc domain in $\mathbb{R}^{n+1}$. Assume further that $\log \frac{d\omega}{d\sigma},\log \frac{d\omega_{\ext}}{d\sigma}\in \textrm{VMO}_{\loc}(\sigma)$. Then $\vec{n}\in \textrm{VMO}_{\loc}(\sigma)$.
\end{theoremb}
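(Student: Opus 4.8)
The plan is to run a blow-up (pseudo-tangent) argument, reducing matters to a rigidity statement: a two-sided chord-arc domain whose harmonic measure with pole at infinity --- for both the domain and its complement --- is a constant multiple of surface measure must be a half-space. Arguing by contradiction, suppose $\vec n\notin\textrm{VMO}_{\loc}(\sigma)$; then there are $\varepsilon_0>0$ and balls $B(x_j,r_j)$ with $x_j\in\partial\Omega$ and $r_j\to0$ such that $\frac1{\sigma(B(x_j,r_j))}\int_{B(x_j,r_j)}|\vec n-(\vec n)_{B(x_j,r_j)}|\,d\sigma\ge\varepsilon_0$ for all $j$. Rescale, setting $\Omega_j=r_j^{-1}(\Omega-x_j)$. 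Since being a two-sided chord-arc domain is a scale-invariant condition, the $\Omega_j$ are two-sided chord-arc domains with uniform NTA and AD-regularity constants, and the compactness theory for such domains allows one to pass to a subsequence along which $\partial\Omega_j\to\partial\Omega_\infty$ and $\mathbb R^{n+1}\setminus\overline{\Omega_j}\to\mathbb R^{n+1}\setminus\overline{\Omega_\infty}$ locally in Hausdorff distance, where $\Omega_\infty$ is again a two-sided chord-arc domain with the same constants, $0\in\partial\Omega_\infty$, and $\sigma_j:=\mathcal H^n|_{\partial\Omega_j}\to\sigma_\infty:=\mathcal H^n|_{\partial\Omega_\infty}$ weakly. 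By the David--Jerison theorem the boundary of a two-sided chord-arc domain is uniformly rectifiable, so the outer unit normals $\vec n_j$ are well defined; by uniform AD-regularity and the Hausdorff convergence one also gets $\vec n_j\,\sigma_j\to\vec n_\infty\,\sigma_\infty$ weakly.

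Rescaling the Green functions with pole at infinity accordingly and invoking the Carleson-type (CFMS) estimates for such Green functions in NTA domains, I would extract, along a further subsequence, locally uniform limits $u$ and $v$ which, after normalization, are the Green functions with pole at infinity of $\Omega_\infty$ and of its exterior, extended by zero; then $\Delta u$ and $\Delta v$ are positive measures supported on $\partial\Omega_\infty$, equal to the respective harmonic measures with pole at infinity, and the correspondingly rescaled harmonic measures converge weakly to them. Since $\log\frac{d\omega}{d\sigma}$ and $\log\frac{d\omega_{\ext}}{d\sigma}$ belong to $\textrm{VMO}_{\loc}(\sigma)$, and the mean oscillation of the logarithm of a density is unchanged by the rescaling, the Poisson kernels of the blow-ups have vanishing oscillation at every scale; hence $\Delta u=c\,\sigma_\infty$ and $\Delta v=c'\,\sigma_\infty$ for some constants $c,c'>0$.

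The rigidity step is now short, and it is precisely the control on both sides that lets one avoid the Alt--Caffarelli--Friedman monotonicity formula required in purely one-phase situations. Set $w=c'u-c\,v$ on $\mathbb R^{n+1}$. On $\overline{\Omega_\infty}$ one has $v\equiv0$, so $w=c'u\ge0$ and $\{w>0\}=\Omega_\infty$; on the closure of the exterior $u\equiv0$, so $w=-c\,v\le0$; and $\Delta w=c'\Delta u-c\,\Delta v=c'c\,\sigma_\infty-c\,c'\,\sigma_\infty=0$, so $w$ is harmonic on all of $\mathbb R^{n+1}$. On the other hand, the CFMS estimate for the Green function with pole at infinity together with the AD-regularity of $\sigma_\infty$ yields $u(x)\approx\dist(x,\partial\Omega_\infty)\le|x|$ (using $0\in\partial\Omega_\infty$), and likewise for $v$, so $|w(x)|\lesssim|x|$. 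A harmonic function on $\mathbb R^{n+1}$ with at most linear growth is affine, hence, since $w(0)=0$, linear; and since $\Omega_\infty\ne\emptyset$ forces $w\not\equiv0$, we get $w(x)=\ell\cdot x$ with $\ell\ne0$, so $\Omega_\infty=\{w>0\}=\{x:\ell\cdot x>0\}$ is a half-space.

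Finally, I would close the loop: the blow-up $\Omega_\infty$ being a half-space means $\partial\Omega_j$ converges in Hausdorff distance to a hyperplane through the origin, so $\vec n_\infty$ is constant and $\frac1{\sigma_j(B(0,1))}\int_{B(0,1)}|\vec n_j-(\vec n_j)_{B(0,1)}|\,d\sigma_j\to0$; by scale invariance this is exactly the quantity over $B(x_j,r_j)$ for $\vec n$ and $\sigma$, contradicting the lower bound $\varepsilon_0$. Thus $\vec n\in\textrm{VMO}_{\loc}(\sigma)$. I expect the main obstacle to be not the rigidity argument but the compactness package underpinning the first two paragraphs --- carrying the NTA and AD-regularity constants, the surface measures, the normals, the Green functions with pole at infinity, and the harmonic measures all to the blow-up limit --- together with the passage from the $\textrm{VMO}$ hypothesis to the exact constancy of the limiting harmonic measures.
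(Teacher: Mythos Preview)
The paper does not itself prove Theorem~B; it is quoted as a known result of Kenig and Toro \cite{Kenig-Toro}.  Your proposal is essentially the Kenig--Toro two-phase argument and is correct in outline: blow up along a sequence witnessing failure of the VMO condition, use the two-sided VMO hypothesis so that both the interior and exterior harmonic measures of the limit domain equal constant multiples of surface measure, form $w=c'u-cv$, observe it is globally harmonic with linear growth (here the linear growth follows from $\omega_\infty=c\,\sigma_\infty$ together with \eqref{e:wsimu} and Lemma~\ref{lemholder}), conclude by Liouville that $w$ is linear and hence $\Omega_\infty=\{w>0\}$ is a half-space, and close the contradiction.

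It is worth contrasting this with what the present paper actually does for its own result, Theorem~\ref{teo}, which is a \emph{one-phase} statement in $\R^3$.  There the exterior VMO hypothesis is absent, so the globally-harmonic linear combination $c'u-cv$ is unavailable; this is exactly why the paper must work harder.  Instead, to prove that the blow-up $\Omega_\infty$ is a half-space the paper first applies the Weiss monotonicity formula (Lemmas~\ref{lem:weakisvar}--\ref{lemweiss}) to show that every blow-down of $\Omega_\infty$ is a cone, then uses a topological/curvature argument specific to $\R^3$ (Lemma~\ref{lemhalf}) together with an Alt--Caffarelli-type flatness improvement (Lemma~\ref{lemac23}) to force $\Omega_\infty$ itself to be a half-space.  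Your two-sided argument sidesteps all of this, which is precisely the point of the extra hypothesis in Theorem~B.

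One technical remark on your final step: the passage from ``$\Omega_\infty$ is a half-space'' to ``the mean oscillation of $\vec n_j$ on $B(0,1)$ tends to zero'' is not just weak convergence of $\vec n_j\,\sigma_j$; it requires an argument.  The paper handles this in Section~9 via the divergence theorem, writing $\int_{\partial\Omega_i}\phi\,\vec n_i\cdot e_{n+1}\,d\sigma_i=\int_{\Omega_i}\mathrm{div}(\phi\,e_{n+1})\,dm$ and using $\chi_{\Omega_i}\to\chi_{\Omega_\infty}$ in $L^1_{loc}$, which you may want to make explicit.
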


In \cite{BH} Bortz and Hofmann showed that this same result holds under the assumption that  $\d\Omega$ is uniformly $n$-rectifiable and so that the measure theoretic boundary has full surface measure, instead  of the two-sided chord-arc condition above. The boundary of any
two-sided chord-arc domain is always uniformly $n$-rectifiable by results due to David and Jerison \cite{DJ}, and thus this is more general than Theorem B. We also note that,
by Proposition 4.10 in \cite{HMT}, such domains with $\vec{n}\in \VMO_{\loc}(\sigma)$ are also vanishing Reifenberg flat. It is also worth mentioning that the arguments in \cite{BH} are very different from the ones in \cite{Kenig-Toro}: while in \cite{Kenig-Toro} Kenig and Toro use blow-up techniques, in \cite{BH} 
Bortz and Hofmann rely on the relationship between the Riesz transform and harmonic measure and 
and exploit the jump relations for the gradient of the single layer potential.  

%

In this paper we resolve the conjecture mentioned above:

\begin{theorem}\label{teo}
Let $\Omega\subset \R^3$ be a two-sided chord-arc domain. Denote by $\sigma$ the surface measure of $\Omega$ and by
$h$ the Poisson kernel with a pole in $\Omega$ if $\Omega$ is bounded or with the pole at infinity if $\Omega$ is unbounded. 
If $\log h\in \textrm{VMO}(\sigma)$, then the outer unit normal of $\Omega$ also belongs to $\textrm{VMO}(\sigma)$.

On the other hand, for $d\geq4$, there are two-sided chord-arc domains $\Omega\subset\R^d$ satisfying
$h \equiv1$ and such that the outer unit normal of $\Omega$ does not belong to $\textrm{VMO}(\sigma)$.
\end{theorem}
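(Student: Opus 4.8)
The plan is to split into the two halves of Theorem~\ref{teo} and to concentrate on the positive part in $\R^3$, since this is where the substance lies; the negative part for $d\ge 4$ is simply pointing to (a variant of) the Kowalski--Preiss cone.

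\textbf{Strategy for the positive statement ($\Omega\subset\R^3$).} The natural approach is a \emph{compactness/blow-up} argument in the spirit of Kenig--Toro \cite{Kenig-Toro-annals,Kenig-Toro2,Kenig-Toro}, combined with the classification of ``flat'' minimizers/blow-ups available \emph{only} in $\R^3$. First I would reduce to a contradiction/compactness setup: assume $\log h\in\VMO(\sigma)$ but $\vec n\notin\VMO(\sigma)$; then there is a sequence of scales/points at which the mean oscillation of $\vec n$ stays bounded below, and rescaling around those points one extracts a blow-up domain $\Omega_\infty$. Using that $\d\Omega$ is uniformly $2$-rectifiable (David--Jerison \cite{DJ}) together with the two-sided chord-arc property, one checks that $\Omega_\infty$ is again a two-sided chord-arc domain (with uniform constants), that its harmonic measure at infinity $\omega_\infty$ has a \emph{constant} Poisson kernel (because $\log h\in\VMO$ forces the density to become constant in the blow-up), i.e. $\omega_\infty=c\,\HH^2\rest\d\Omega_\infty$, and — crucially — that $\vec n_{\Omega_\infty}$ is \emph{not} VMO, in particular $\d\Omega_\infty$ is not a hyperplane.

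\textbf{The key new input.} At this point one has a two-sided chord-arc ``pole at infinity'' configuration in $\R^3$ whose harmonic measure is (a constant times) surface measure, and one must derive a contradiction. This is exactly the place where dimension $3$ is special: I expect to invoke a \emph{monotonicity formula} (Weiss-type, or the Alt--Caffarelli--Friedman two-phase monotonicity functional applied to the two harmonic functions $u^+$ on $\Omega$ and $u^-$ on $\Omega^c$ with $\nabla u^\pm$ having the \emph{same} boundary density because the Poisson kernels of both sides agree — here one uses that $\omega=c\sigma$ on \emph{both} sides, which in the two-sided situation follows from the one-sided hypothesis) to show that the blow-up is a cone, and then that a two-sided chord-arc \emph{minimal cone} in $\R^3$ with this density identity must be a half-space. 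The relevant rigidity is the classification of one-homogeneous harmonic functions / minimal cones for the one-phase functional in low dimensions — in $\R^3$ the only such cones are the trivial ones, whereas in $\R^4$ the Kowalski--Preiss cone intervenes, which is precisely why the theorem is sharp. So the logical skeleton is: $\VMO$ hypothesis $\Rightarrow$ every blow-up has constant Poisson kernel on both sides $\Rightarrow$ every blow-up is a minimal cone for the two-phase ACF functional $\Rightarrow$ (dimension $3$!) every blow-up is a half-space $\Rightarrow$ $\vec n\in\VMO$, contradiction.

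\textbf{Main obstacle and loose ends.} The hard part is the rigidity step: proving that a two-sided chord-arc cone $\Omega\subset\R^3$ with $\omega_\infty=c\,\HH^2\rest\d\Omega_\infty$ on both sides is a half-space. One must (i) get the ``both sides'' density identity — i.e. show the hypothesis $\log h\in\VMO(\sigma)$ on $\Omega$ alone propagates, \emph{after blow-up}, to the exterior domain as well, presumably via the comparison principle / boundary Harnack and the two-sided NTA structure; (ii) run the ACF monotonicity formula, which requires enough regularity of $\d\Omega_\infty$ to make sense of the boundary traces — here uniform rectifiability and the NTA condition should suffice, but the details of the monotonicity argument on a merely rectifiable (not $C^1$) boundary are delicate; and (iii) classify the resulting homogeneous solution. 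Secondary technical points: making the blow-up extraction rigorous (convergence of domains in the local Hausdorff sense, convergence of harmonic measures, lower semicontinuity of the oscillation of $\vec n$), and handling the pole-at-infinity normalization versus finite-pole normalization uniformly. For the \emph{negative} part, it suffices to take $\Omega$ to be (a thickened/smoothed two-sided version of) the Kowalski--Preiss cone in $\R^4$, or its product with $\R^{d-4}$, for which it is known that the harmonic measure with pole at infinity equals surface measure (so $h\equiv 1$, hence trivially $\log h\in\VMO$), while $\vec n$ is locally constant off the singular cone but jumps across it, so $\vec n\notin\VMO(\sigma)$; I would cite \cite[Proposition 3.1]{Kenig-Toro-annals} and \cite{Alt-Caffarelli} for these facts and only check that the domain is genuinely two-sided chord-arc.
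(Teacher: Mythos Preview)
Your overall blow-up strategy is right, but the crucial rigidity step is misconceived, and the counterexample you name does not work.

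\textbf{The negative part.} The Kowalski--Preiss cone $\{x^2+y^2+z^2>w^2\}$ is \emph{not} two-sided chord-arc: its exterior $\{x^2+y^2+z^2<w^2\}$ is a double cone joined only at the origin, hence not NTA. That is exactly why it served in \cite{Kenig-Toro-annals} as a counterexample only to the one-sided statement. The paper instead uses a different conical example due to Hong \cite{Hong}, namely the cone in $\R^4$ bi-Lipschitz equivalent to $\{x_1^2+x_2^2>x_3^2+x_4^2\}$, for which one checks directly that both $\Omega$ and $\Omega_{\ext}$ are chord-arc and that the Poisson kernel with pole at infinity is identically $1$. Products with $\R^{d-4}$ handle $d>4$.

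\textbf{The positive part.} Your proposed mechanism for the rigidity step --- propagate the constant-density condition to the \emph{exterior} domain via boundary Harnack, then run the two-phase ACF monotonicity formula --- is exactly what one would like to avoid: if you could show $\log h_{\ext}\in\VMO$ from $\log h\in\VMO$, you would be reducing to Theorem~B of Kenig--Toro, and there is no known way to do this. The paper never touches the exterior Poisson kernel. Instead it stays entirely on the one-phase side: after the pseudo-blow-up produces $\Omega_\infty$ with $d\omega_\infty/d\sigma_\infty\equiv 1$, one applies the \emph{Weiss} one-phase monotonicity formula to show that every blow-\emph{down} of $\Omega_\infty$ is a cone. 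The two-sided NTA hypothesis is used only \emph{topologically}: a two-sided conical NTA domain in $\R^3$ has $\partial\Omega_0\cap S^2$ a single closed curve, and the mean-curvature argument of Caffarelli--Jerison--Kenig \cite{CJK} forces $\Omega_0^c$ to be a convex (hence Lipschitz) cone, after which the overdetermined-problem rigidity of Farina--Valdinoci \cite{Farina-Valdinoci} gives a half-space. This shows all blow-downs of $\Omega_\infty$ are flat; one then needs a further step --- an Alt--Caffarelli type flatness-improvement argument (Lemma~\ref{lemac23}) --- to conclude $\Omega_\infty$ itself is a half-space. Your outline collapses the blow-down and the Alt--Caffarelli improvement into a single ``blow-ups are cones'' step, which is not how the argument runs.
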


Most of the paper is devoted to prove the positive result stated in the theorem for $\R^3$. 
Our arguments use the powerful blow-up techniques developed by Kenig and Toro in \cite{Kenig-Toro2}. Indeed, by arguments analogous to the ones of Kenig and Toro, we reduce our problem to the study of the case
when $\Omega_\infty$ is an unbounded two-sided chord-arc domain such that its Poisson kernel with pole at infinity is constantly equal to $1$. By combining a monotonicity formula due to Weiss \cite{Weiss} and some topological arguments inspired by a work from Caffarelli, Jerison and Kenig \cite{CJK}, we then show that for such domains all blow-downs are flat. This is probably one
of the main novelties in our paper. Then an application of a variant of a well known theorem of Alt and Caffarelli \cite{Alt-Caffarelli} shows that $\Omega_\infty$ must be a half space.

The aforementioned reduction of the problem to the case 
when the Poisson kernel with pole at infinity is constantly equal to $1$ requires estimating from above the gradient of the Green function. This estimate is obtained in \cite{Kenig-Toro2} under the assumption that the domain $\Omega$ is Reifenberg flat, and this is one of the main technical difficulties of that paper. In \cite{Kenig-Toro} it is shown how these estimates can be extended to the case when $\Omega$ is not Reifenberg flat. In our present paper we provide some alternative arguments to estimate the gradient of the Green function. The main difference with respect to the ones in \cite{Kenig-Toro2} and
\cite{Kenig-Toro} is that in the present paper we use the jump relations for the gradient of single layer potentials, instead of the (perhaps) less standard approach in the aforementioned works. We think that our approach has some independent interest (specially because of the connection between harmonic measure with pole at infinity and the Riesz transform that we describe in Section \ref{sec2}).

Concerning the negative result for dimensions $d\geq4$ in Theorem \ref{teo}, basically we recall in the last section of the paper an example of a conical domain in $\R^4$ by Guanghao Hong\footnote{ So the statement
in the theorem referring to the case $d\geq4$ should not be attributed to us. 
} \cite{Hong} such that the harmonic measure with pole at infinity coincides with
surface measure, and so that the outer unit normal does not belong to $\textrm{VMO}(\sigma)$. 
 One can check easily that such domain is two-sided
NTA. Probably, this example was unnoticed in some recent works in this area.

\vspace{3mm}
We would like to thank Simon Bortz and Tatiana Toro for explaining to us some of the result in \cite{Kenig-Toro2}, of which we were not aware.


\vv

\section{Preliminaries}

For $a,b\geq 0$, we will write $a\lesssim b$ if there is $C>0$ so that $a\leq Cb$ and $a\lesssim_{t} b$ if the constant $C$ depends on the parameter $t$. We write $a\approx b$ to mean $a\lesssim b\lesssim a$ and define $a\approx_{t}b$ similarly.

\begin{definition}
Given a closed set $E$, $x\in \mathbb{R}^{d}$, $r>0$, and $P$ a $d$-plane, we set
\[\Theta_{E}(x,r,P)=r^{-1}\max\left\{\sup_{y\in E\cap B(x,r)}\dist(y,P), \sup_{y\in P\cap B(x,r)}\dist(y,E)\right\}.\]
Also define
\[
\Theta_{E}(x,r)=\inf_{P}\Theta_{E}(x,r,P)\]
where the infimum is over all $d$-planes $P$. A set $E$ is {\it $\delta$-Reifenberg flat} if $\Theta_{E}(x,r)<\delta$ for all $x\in E$ and $r>0$, and is {\it vanishing Reifenberg flat} if 
\[
\lim_{r\rightarrow 0} \sup_{x\in E} \Theta_{E}(x,r)=0.\]
 \end{definition}

\begin{definition}\label{defreif}
Let $\Omega\subset \R^{n+1}$ be an open set, and let $0<\delta<1/2$. We say that $\Omega$
is a $\delta$-Reifenberg flat domain if it satisfies the following conditions:
\begin{itemize}
\item[(a)] $\partial\Omega$ is $\delta$-Reifenberg flat.

\item[(b)] For every $x\in\partial \Omega$ and $r>0$, denote by $\PP(x,r)$ an $n$-plane that minimizes $\Theta_{E}(x,r)$. Then one of the connected components of 
$$B(x,r)\cap \bigl\{x\in\R^{n+1}:\dist(x,\PP(x,r))\geq 2\delta\,r\bigr\}$$
is contained in $\Omega$ and the other is contained in $\R^{n+1}\setminus\Omega$.
\end{itemize}
If, additionally,  $\partial\Omega$ is vanishing Reifenberg flat, then $\Omega$ is said to be vanishing Reifenberg flat, too.
\end{definition}

\begin{definition}
 Let $\Omega\subset \mathbb{R}^{n+1}$.
We say that $\Omega$ satisfies the {\it Harnack chain condition} if there is a uniform constant $C$ such that
for every $\rho >0,\, \Lambda\geq 1$, and every pair of points
$x,y \in \Omega$ with $\dist(x,\d\Omega),\,\dist(y,\d\Omega) \geq\rho$ and $|x-y|<\Lambda\,\rho$, there is a chain of
open balls
$B_1,\dots,B_N \subset \Omega$, $N\leq C(\Lambda)$,
with $x\in B_1,\, y\in B_N,$ $B_k\cap B_{k+1}\neq \emptyset$
and $C^{-1}\diam (B_k) \leq \dist (B_k,\partial\Omega)\leq C\diam (B_k).$  The chain of balls is called
a {\it Harnack Chain}. Note that if such a chain exists, then
\[u(x)\approx_{N}u(y).\]
For $C\geq 2$, $\Omega$ is a {\it $C$-corkscrew domain} if for all $\xi\in \d\Omega$ and $r>0$ there are two balls of radius $r/C$ contained in $B(\xi,r)\cap \Omega$ and $B(\xi,r)\backslash \Omega$ respectively. If $B(x,r/C)\subseteq B(\xi,r)\cap \Omega$, we call $x$ a {\it corkscrew point} for the ball $B(\xi,r)$. Finally, we say $\Omega$ is {\it $C$-nontangentially accessible (or $C$-NTA, or just NTA)} if it satisfies the Harnack chain condition and  is a $C$-corkscrew domain. We say $\Omega$ is {\it two-sided $C$-NTA} if both $\Omega$ and $\Omega_{\ext}:=(\overline{\Omega})^{c}$ are $C$-NTA. Finally, it is {\it chord-arc} if,
additionally, $\d\Omega$ is $n$-AD-regular, meaning there is $C>0$ so that, if $\sigma$ denote surface measure, then
\[
C^{-1} r^{n} < \sigma(B(x,r))< Cr^{n} \mbox{ for all }x\in \d\Omega, \;\; 0<r\leq\diam(\Omega).\]
\end{definition}
 
 Any measure $\sigma$ that satisfies the preceding estimate for all $x\in \supp\sigma$ and $0<r\leq\diam(\supp\sigma)$ is called
 $n$-AD-regular.

\begin{definition}\label{defvm00}
Let $\sigma$ be an n-AD-regular measure in $\mathbb{R}^{n}$ and $f$ a locally integrable function with respect to $\sigma$. We say $f\in \textrm{VMO}(\sigma)$ if
\begin{equation}\label{defvmo}
\lim_{r\rightarrow 0}\sup_{x\in \supp \sigma}\,\, \avint_{B(x,r)} \left| f-\,\avint_{B(x,r)}f\,d\sigma\right|^{2}d\sigma =0.
\end{equation}
We say $f\in \textrm{VMO}_{\textrm{loc}}(\sigma)$ if, for any compact set $K$,
\[
\lim_{r\rightarrow 0}\sup_{x\in \supp \sigma\cap K} \,\,\avint_{B(x,r)} \left|f-\,\avint_{B(x,r)}f\,d\sigma\right|^{2}d\sigma =0.\]
\end{definition}

It is well known  that the space VMO  coincides with the closure of the set of bounded uniformly continuous functions on $\supp \sigma$ in the BMO norm. 

We also remark that one can find slightly different definitions of VMO in the literature. 
For example, in some references besides \rf{defvmo} one asks the additional condition that
$$\lim_{r\rightarrow \infty}\sup_{x\in \supp \sigma} \avint_{B(x,r)} \left| f-\avint_{B(x,r)}fd\sigma\right|^{2}d\sigma =0.$$
In this case, it turns out that VMO coincides with the closure of the set of {\em compactly supported} continuous functions on $\supp \sigma$ in the BMO norm. However, the definition we will use in our paper is Definition \ref{defvm00} (as in other works like \cite{Kenig-Toro-annals}
or \cite{Kenig-Toro2}) .

\vv

\section{The Riesz transform of the harmonic measure with pole at infinity}\label{sec2}

We warn readers that are familiar with the results in \cite{Kenig-Toro2} and \cite{Kenig-Toro} that they may skip 
this section, as well as Sections \ref{section4} and \ref{section5}, and go directly to Section \ref{section6} without much harm.
In fact, in Sections 3-5 we provide the alternative arguments to estimate the gradient of the Green function that we mentioned
in the Introduction. Our approach uses the jump relations for the gradient of the single layer potential (derived by 
Hofmann, Mitrea, and Taylor \cite{HMT} in the case of chord-arc domains and somewhat more general settings). Modulo
these standard relations our arguments are reasonably self contained and shorter than the ones in \cite{Kenig-Toro2} and \cite{Kenig-Toro}.

Recall from \cite[Lemma 3.7]{Kenig-Toro-annals} that if $\Omega\subset\R^{n+1}$ is an unbounded NTA domain, then there exists a function $u\in C(\overline\Omega)$
and a measure $\omega$ in $\partial \Omega$
such that
\begin{equation}\label{eq:hminfty}
\left\{ \begin{array}{ll} \Delta u = 0 & \mbox{ in $\Omega$,}\\
u=0  & \mbox{ in $\partial \Omega$,}\\
u > 0 & \mbox{ in $\Omega$,}
\end{array}
\right.
\end{equation}
and
\begin{equation}\label{eq:Green-hminfty}
\int_\Omega u \,\Delta\vphi\,dm = \int_{\partial\Omega} \vphi\,d\omega \;\; \mbox{ for all }\;\; \varphi\in C_{c}^{\infty}(\R^{n+1}).
\end{equation}
The function $u$ and the measure $\omega$ are unique modulo constant factors, and $u$ is the so called Green function
with pole at infinity and $\omega$ the harmonic measure with pole at infinity.

From now on, we will assume that $u$ is also defined in $\R^{n+1}\setminus\Omega$ and vanishes identically
here, so that $u\in C(\R^{n+1})$. 

Given a Radon measure $\mu$ in $\R^{n+1}$, its $n$-dimensional Riesz transform is defined by
$$\RR\mu(x) = c_n\int \frac{x-y}{|x-y|^{n+1}}\,d\mu(y),$$
whenever the integral makes sense. We assume that the constant $c_n$ is chosen so that $K(x):= c_n\,x/|x|^{n+1}$ coincides with
the gradient of the fundamental solution of the Laplacian.

\vv
The main result of this section is the following.

\begin{propo}\label{proporiesz}
Let $\Omega\subset\R^{n+1}$ be an unbounded NTA domain, and let $u$ and $\omega$ be the Green function
and the associated harmonic measure with pole at infinity, respectively. Suppose that for all $x\in\partial\Omega$ there exist some constants
$0<\delta<1$ and $C>0$ (both possibly depending on $x$) such that
\begin{equation}\label{eqgrow}
\omega(B(x,r))\leq C\,r^{n+\delta}\quad  \mbox{ for all $r\geq 1$.}
\end{equation}
Then we have
\begin{equation}\label{eqriesz}
\RR\omega(x) - \RR\omega(y) = \nabla u(y) - \nabla u(x) \quad  \mbox{ for all $x,y\in\R^{n+1}\setminus
\partial\Omega$.}
\end{equation}
\end{propo}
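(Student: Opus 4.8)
The plan is to relate both sides of \eqref{eqriesz} to the Newtonian potential of the distribution $\Delta u$. The key observation is that, as a distribution, $\Delta u = \omega$: indeed, for any $\varphi\in C_c^\infty(\R^{n+1})$, since $u$ vanishes off $\Omega$ and is harmonic in $\Omega$, integrating by parts in $\Omega$ (using $u=0$ on $\partial\Omega$) gives $\int u\,\Delta\varphi\,dm = \int_{\partial\Omega}\varphi\,d\omega$, which is precisely \eqref{eq:Green-hminfty}. So $u$ solves $\Delta u = \omega$ in the sense of distributions on all of $\R^{n+1}$. Formally this means $\nabla u = K*\omega = \RR\omega$ up to a harmonic (in fact affine, given the growth) correction, and the content of the proposition is to make this rigorous despite $\omega$ being non-integrable at infinity, so that neither $\RR\omega$ nor the convolution $K*\omega$ converges absolutely.

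The main technical device I would use to handle the growth at infinity is a \emph{difference} of potentials. Fix $x_0\in\partial\Omega$ and a radius, and for $x,y\notin\partial\Omega$ write, with a smooth cutoff $\chi_{R}$ equal to $1$ on $B(x_0,R)$ and supported in $B(x_0,2R)$, the splitting $\omega = \chi_R\omega + (1-\chi_R)\omega$. The potential $\RR(\chi_R\omega)(x) - \RR(\chi_R\omega)(y) = \int (K(x-z)-K(y-z))\,\chi_R(z)\,d\omega(z)$ is a well-defined absolutely convergent integral for $x,y\notin\supp(\chi_R\omega)$; for the far piece, the kernel difference $K(x-z)-K(y-z)$ decays like $|x-y|\,|z|^{-(n+1)}$ for $|z|$ large, and combined with the hypothesis \eqref{eqgrow} (which gives $\omega(B(x_0,r))\lesssim r^{n+\delta}$ for $r\ge 1$) a dyadic summation shows $\int_{|z-x_0|\ge R}|K(x-z)-K(y-z)|\,d\omega(z) \lesssim |x-y|\,R^{\delta-1}\to 0$ as $R\to\infty$. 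Hence the difference $\RR\omega(x)-\RR\omega(y)$ exists as the limit of the truncated differences; this is where the hypothesis $\delta<1$ is essential.

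Next I would identify the distributional gradient. Since $\Delta u=\omega$ in $\mathcal D'(\R^{n+1})$ and $\mathcal E := K\cdot(\,\cdot\,)$... more precisely, letting $E$ be the fundamental solution of the Laplacian so that $\nabla E = K$, the function $v_R := E*(\chi_R\omega)$ satisfies $\Delta v_R = \chi_R\omega$ and $\nabla v_R = K*(\chi_R\omega) = \RR(\chi_R\omega)$ (a locally integrable, indeed smooth-off-$\supp$, function). Therefore $u - v_R$ is harmonic in the interior of $\{\chi_R = 1\}\supset B(x_0,R)$, so $\nabla(u-v_R)$ is harmonic there; I would then show, using the growth bound on $\omega$ together with standard estimates on $u$ for NTA domains (e.g.\ $u(z)\lesssim \dist(z,\partial\Omega)\cdot\omega(B(z,\dist(z,\partial\Omega)))/\dist(z,\partial\Omega)^n$ type bounds, or more simply a Caccioppoli/gradient estimate giving $\sup_{B(x_0,R/2)}|\nabla(u-v_R)|$ controlled by $R^{-1}$ times the oscillation, which is $o(R^{\delta-1}\cdot R) = o(R^\delta)$), that the harmonic function $\nabla u - \RR(\chi_R\omega)$ on $B(x_0,R)$ is, after subtracting its value at a fixed reference point, uniformly small on compact sets as $R\to\infty$. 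Letting $R\to\infty$ and combining with the convergence of $\RR(\chi_R\omega)(x)-\RR(\chi_R\omega)(y)$ from the previous paragraph yields $\nabla u(x) - \nabla u(y) = \RR(\chi_R\omega)(x)-\RR(\chi_R\omega)(y) + o(1) \to \RR\omega(x)-\RR\omega(y)$, which rearranges to \eqref{eqriesz}.

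The step I expect to be the main obstacle is the uniform control of the harmonic function $\nabla u - \RR(\chi_R\omega)$ on $B(x_0,R)$: one must rule out a nonconstant affine (or higher-degree harmonic) part surviving in the limit. This requires a genuine quantitative upper bound on $|\nabla u|$ at scale $R$ driven by the growth exponent $n+\delta$ of $\omega$ — a Liouville-type argument showing that a harmonic function on all of $\R^{n+1}$ with sub-linear-plus-$\delta$ growth whose Laplacian-potential part has been removed must be constant. I would handle this by fixing the normalization $\nabla u(y) - \RR\omega(y)$ at one point $y$ and showing the difference at any other point $x$ tends to $0$; the affine ambiguity in $u$ is harmless because it contributes equally to $\nabla u(x)$ and $\nabla u(y)$ and cancels in the difference, which is exactly why the proposition is stated as an identity between differences rather than as $\nabla u = -\RR\omega + \text{const}$.
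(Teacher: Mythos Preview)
Your approach is genuinely different from the paper's and, with some care, can be made to work. The paper does \emph{not} truncate $\omega$ spatially. Instead it approximates the pair $(u,\omega)$ by the finite--pole pair $(u_j,\omega_j)=\bigl(g(\cdot,p_j)/g(a,p_j),\,\omega^{p_j}/g(a,p_j)\bigr)$ with $p_j\to\infty$; at each finite pole the exact representation $g(x,p)=\EE(x-p)-\int\EE(x-z)\,d\omega^p(z)$ gives the identity $\nabla u_j(y)-\nabla u_j(x)=\RR\omega_j(x)-\RR\omega_j(y)$ plus a term $\tfrac1{g(a,p_j)}(K(y-p_j)-K(x-p_j))$, and the work lies in passing to the limit. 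Your route --- use $\Delta u=\omega$ in $\mathcal D'(\R^{n+1})$, subtract $v_R=E*(\chi_R\omega)$ so that $u-v_R$ is harmonic on $B(x_0,R)$, then kill the harmonic part by interior estimates --- is more intrinsic (no approximating sequence) and arguably cleaner.

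Two points, however, need attention. First, there is a sign inconsistency: with the paper's convention $K=\nabla\EE$ one has $\Delta\EE=-\delta_0$ (since $\EE(x)=c_n|x|^{1-n}>0$), hence $\Delta v_R=-\chi_R\omega$ and it is $u+v_R$ (not $u-v_R$) that is harmonic in $B(x_0,R)$. Tracking this gives $\nabla u+\RR\omega=\mathrm{const}$, i.e.\ exactly \eqref{eqriesz}; your ``rearranges to'' is hiding a genuine sign slip.

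Second, and more substantively, your Liouville step is stated imprecisely. Controlling $\sup_{B(x_0,R/2)}|\nabla(u\pm v_R)|$ by $R^{-1}$ times the oscillation and claiming this is $o(R^\delta)$ is not what is needed (and the oscillation is not $o(R^\delta)$). What you actually want is the \emph{second}-derivative estimate
\[
|D^2(u+v_R)|\ \lesssim\ R^{-2}\sup_{B(x_0,R/2)}|u+v_R|,
\]
so that $\nabla F_R(x)-\nabla F_R(y)=O(|x-y|\,R^{-2}\sup|F_R|)$. The correct growth is $\sup_{B(x_0,R/2)}|u+v_R|=O(R^{1+\delta})$, which yields $D^2 F_R=O(R^{\delta-1})\to0$ since $\delta<1$. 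The bound on $u$ follows from $u(z)\le Cu(x_R)\approx R^{1-n}\omega(B(x_0,R))\lesssim R^{1+\delta}$ (Lemma~\ref{lemholder} and \rf{e:wsimu}). The bound on $v_R$ is where real work hides: for $z\in B(x_0,R/2)$ with $d=d_\Omega(z)$ one has $|z-w|\ge d$ for all $w\in\partial\Omega$, and using the NTA H\"older estimate $\omega(B(\xi,r))\approx r^{n-1}u(\xi_r)\lesssim (r/R)^{n-1+\alpha}\,\omega(B(x_0,R))$ for $r\le R$ (again Lemma~\ref{lemholder}) a dyadic summation gives
\[
|v_R(z)|\ \lesssim\ \int_{B(x_0,2R)}|z-w|^{1-n}\,d\omega(w)\ \lesssim\ R^{1-n}\,\omega(B(x_0,R))\ \lesssim\ R^{1+\delta},
\]
uniformly in $z$. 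You should state this bound explicitly; it is the analogue in your framework of the paper's boundary Harnack estimate $g(a,p_j)\approx u(a)/\omega(B(p_j))$.
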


Some remarks are in order.  First, it is easy to check that if the condition \rf{eqgrow} holds for all $x\in\partial\Omega$, then it also holds for all
$x\in\R^{n+1}$ (with some constants $C,\delta$ depending also on $x$).
For the identity \rf{eqriesz} to be true, it is important to define the Riesz transform so that its kernel
is the gradient of the fundamental solution of the Laplacian, as we did above.
 On the other hand, the function $\RR\omega$ is defined modulo
a constant term (i.e., in a BMO sense). So for all $x,y\in\R^{n+1}\setminus
\partial\Omega$, by definition we write
$$\RR\omega(x) - \RR\omega(y) = \int \bigl(K(x-z) - K(y-z)\bigr)\,d\omega(z).$$
Then it turns out that the integral on the right hand side above is absolute convergent. Indeed, denoting
$d=\max(2|x-y|,1)$,
we have
\begin{align}\label{eq**0}
\int_{|x-z|\geq d} \bigl|K(x-z) - K(y-z)\bigr|\,d\omega(z) & \lesssim \int_{|x-z|\geq d} \frac{|x-y|}{|x-z|^{n+1}} \,d\omega(z)\\
& \lesssim \sum_{k\geq 0} \frac{|x-y|}{\bigl(2^kd\bigr)^{n+1}}\,\omega(B(x,2^kd))\nonumber\\
& \lesssim_x \sum_{k\geq 0} \frac{|x-y|}{\bigl(2^kd\bigr)^{n+1}}\,(2^{k}d)^{k(n+\delta)} <\infty,\nonumber
\end{align}
which implies that
$$\int \bigl|K(x-z) - K(y-z)\bigr|\,d\omega(z) <\infty,$$
since $x,y\not\in \supp\omega =\partial\Omega$.

\vv

Before proceeding with the proof of Proposition \ref{proporiesz}, we recall a few lemmas about NTA domains. These lemmas were originally shown in \cite{Jerison-Kenig} for bounded NTA domains, but as the arguments for these results are purely local, they also hold for unbounded NTA domains. 

\begin{lemma}[{\cite[Lemma 4.4]{Jerison-Kenig}}] Let $\Omega\subseteq \R^{n+1}$ be NTA and $B$ a ball centered on $\d\Omega$ with $0<r(B)<\diam \d\Omega$. Let $x_{B}$ be a corkscrew point for $B$ in $\Omega$ and let $g$ be the Green function for $\Omega$. Then
\begin{equation}\label{e:wsimg}
\omega^{z}(B)\approx g(x_B,z)r^{1-n} \mbox{ for all }z\in \Omega\backslash 2B.
\end{equation}
\end{lemma}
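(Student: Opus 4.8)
The statement to prove is the boundary Harnack estimate
\eqn{wsimg}, namely $\omega^{z}(B)\approx g(x_B,z)\,r^{1-n}$ for $z\in\Omega\setminus 2B$, where $B=B(\xi,r)$ is centered on $\d\Omega$, $x_B$ is a corkscrew point for $B$, and $g$ is the Green function. Since the cited reference is \cite[Lemma 4.4]{Jerison-Kenig} and the remark preceding the lemma explicitly says the proof is purely local and carries over verbatim from the bounded case, the natural approach is to reproduce the Jerison--Kenig argument, which rests on three standard ingredients for NTA domains: (i) the corkscrew/Harnack chain structure, (ii) Hölder continuity of quotients of positive harmonic functions vanishing on a boundary piece (boundary Harnack principle), and (iii) the basic comparison of harmonic measure with the Green function via the maximum principle. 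I would not attempt anything genuinely new here; the point is just to record that the localness of all estimates involved means the unboundedness of $\Omega$ is irrelevant.

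\textbf{Key steps.} First I would fix the ball $B=B(\xi,r)$ with $\xi\in\d\Omega$ and $0<r<\diam\d\Omega$, and a corkscrew point $x_B$ with $B(x_B,r/C)\subseteq B\cap\Omega$. The upper bound $\omega^z(B)\lesssim g(x_B,z)r^{1-n}$ is obtained by a maximum principle comparison: on the region $\Omega\setminus \tfrac12 B$ one compares the harmonic function $z\mapsto \omega^z(B)$ with the function $z\mapsto r^{n-1}g(x_B,z)$, using that on $\d(\tfrac12 B)\cap\Omega$ one has $g(x_B,z)\gtrsim r^{1-n}$ (this follows from the corkscrew condition, Harnack's inequality, and the standard lower bound $g(x_B,z)\gtrsim |x_B-z|^{1-n}$ for $z$ at distance $\approx r$ from $x_B$ inside a Harnack-chain-connected region), while $\omega^z(B)\le 1$ there, and on $\d\Omega\setminus B$ both functions vanish. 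For the lower bound $\omega^z(B)\gtrsim g(x_B,z)r^{1-n}$ the roles are reversed: one shows $g(x_B,z)\lesssim r^{1-n}$ on $\d(2B)\cap\Omega$ by writing $g(x_B,\cdot)$ in terms of harmonic measure of $B(\xi,2r)$ evaluated at $x_B$ and using AD-type upper regularity of harmonic measure (equivalently, $\omega^{x_B}(B(\xi,2r))\lesssim 1$ combined with the Green function bound near a corkscrew point), and then compares against $\omega^z(B)$ on $\Omega\setminus 2B$ via the maximum principle, noting $\omega^z(B)\gtrsim \omega^{x_B}(B)\gtrsim 1$ on a suitable interior reference point by the corkscrew condition and Harnack chains. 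Finally the two inequalities combine, with all implicit constants depending only on the NTA constants of $\Omega$.

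\textbf{Main obstacle.} There is no deep obstacle — this is a cited classical lemma — so the ``hard part'' is purely bookkeeping: making sure that every constant that appears genuinely depends only on the NTA constants and the dimension, and in particular does not degenerate as $r\to 0$ or (in the unbounded case) as one moves far from a fixed base point. The only place where unboundedness could conceivably matter is in invoking the existence and basic properties of the Green function $g$ on an unbounded NTA domain, but the comparisons above only ever use $g$ near the corkscrew point $x_B$ and on the annular region $B(\xi,2r)\setminus\tfrac12 B$, so they are insensitive to the behavior of $g$ at infinity; this is exactly the ``purely local'' remark that licenses quoting \cite{Jerison-Kenig}. I would therefore present the proof as: cite \cite[Lemma 4.4]{Jerison-Kenig} for the statement in bounded NTA domains, then observe that each inequality in that proof is established inside a bounded region (a fixed multiple of $B$ together with a Harnack chain from $x_B$ to the evaluation point, of length controlled by the NTA constants), so the argument applies verbatim, yielding \eqn{wsimg} with constants depending only on the NTA character of $\Omega$.
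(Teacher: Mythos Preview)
Your proposal is correct and aligns with the paper's treatment: the paper does not give a proof of this lemma at all, but simply cites \cite[Lemma 4.4]{Jerison-Kenig} and remarks in the sentence preceding the statement that ``as the arguments for these results are purely local, they also hold for unbounded NTA domains.'' Your sketch of the Jerison--Kenig maximum-principle comparison is accurate and, if anything, more detailed than what the paper provides; the key observation --- that every estimate lives inside a bounded multiple of $B$ and so the unboundedness of $\Omega$ is irrelevant --- is exactly the point the authors are making.
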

\vv

\begin{lemma}[{\cite[Lemma 4.10]{Jerison-Kenig}}] Let $\Omega\subseteq \R^{n+1}$ be an NTA domain and $B$ a ball centered on $\d\Omega$ with $0<Mr(B)<\diam \d\Omega$, where $M$ depends on the NTA character of $\Omega$. Suppose $u,v$ are two positive harmonic functions in $\Omega$ vanishing continuously on $MB\cap \d\Omega$ and let $x_{B}$ be a corkscrew point for $B$ in $\Omega$. Then
\begin{equation}\label{e:bharnack}
\frac{u(z)}{v(z)}\approx \frac{u(x_{B})}{v(x_{B})} \;\;  \mbox{ for all }\;\; z\in B\cap \Omega.
\end{equation}
\end{lemma}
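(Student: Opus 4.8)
\textbf{Plan for the boundary Harnack inequality \eqref{e:bharnack}.}
The statement is the classical boundary Harnack principle for NTA domains, and my plan is to follow the scheme of Jerison and Kenig \cite{Jerison-Kenig}: first establish a Carleson-type estimate bounding a positive harmonic function vanishing on a boundary portion by its value at a corkscrew point, then promote this to a two-sided comparison of two such functions. Throughout I work with a ball $B = B(\xi,r)$ centered on $\d\Omega$, with $Mr < \diam\d\Omega$ for a large structural constant $M$, and with $x_B$ a corkscrew point for $B$; the NTA constant $C$ and $M$ are fixed by $\Omega$ and all implicit constants below depend only on these.

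\emph{Step 1 (Carleson estimate).} I would first prove: if $w>0$ is harmonic in $\Omega$, vanishes continuously on $MB\cap\d\Omega$, and $w \le A$ on $\Omega\cap M B$ away from the boundary in some quantitative sense, then in fact $w(z)\lesssim w(x_{B})$ for all $z\in B\cap\Omega$. The mechanism is the interplay of the corkscrew condition, the Harnack chain condition, and the maximum principle. Fix $z\in B\cap\Omega$ and let $d=\dist(z,\d\Omega)$. If $d\gtrsim r$, then $z$ and $x_B$ are connected by a Harnack chain of bounded length and $w(z)\approx w(x_B)$ directly. Otherwise pick $\xi'\in\d\Omega$ with $|z-\xi'|=d$ and iterate: the corkscrew points $x_{B(\xi',2^{k}d)}$ for $k=0,1,\dots$ march from near $z$ out to scale $r$, consecutive ones are joined by bounded Harnack chains, and a maximum-principle comparison on the corkscrew balls — using that $w$ vanishes on the boundary and a barrier built from the corkscrew condition in the complement — gives a \emph{geometric decay} $w(x_{B(\xi',2^{-j}d_0)})\lesssim \lambda^{j}\sup_{B\cap\Omega}w$ for some $\lambda<1$, or dually a controlled growth outward. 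Summing the geometric series and invoking Harnack along the chain from the last corkscrew point to $x_B$ yields $w(z)\lesssim w(x_B)$. (This is \cite[Lemma 4.4 / Lemma 4.8]{Jerison-Kenig}; since all the ingredients — corkscrew, Harnack chains, interior Harnack, maximum principle — are purely local, the proof transfers verbatim to unbounded $\Omega$.)

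\emph{Step 2 (one-sided comparison).} Now let $u,v>0$ be harmonic in $\Omega$, vanishing continuously on $MB\cap\d\Omega$. Applying Step 1 to $u$ gives $\sup_{B'\cap\Omega}u\lesssim u(x_B)$, where $B'=B(\xi, r')$ for an intermediate radius $r\lesssim r'\lesssim Mr$. Next I bound $v$ from \emph{below} by $v(x_B)$ times a barrier: using the exterior corkscrew condition for $\Omega$ at nearby boundary points one constructs, via the maximum principle, a function comparable to $v(x_B)$ on a slightly shrunk ball — concretely, $v(z)\gtrsim v(x_B)$ on $B''\cap\Omega$ for a still smaller radius, again by a Harnack-chain/maximum-principle argument (\cite[Lemma 4.10]{Jerison-Kenig}). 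Combining,
\[
\frac{u(z)}{v(z)} \;\lesssim\; \frac{u(x_B)}{v(x_B)} \qquad\text{for all } z\in B\cap\Omega.
\]
Interchanging the roles of $u$ and $v$ gives the reverse inequality, hence \eqref{e:bharnack}.

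\emph{Main obstacle.} The delicate point is Step 1: establishing the geometric-rate decay (equivalently, the doubling-type control) of a positive harmonic function along the sequence of corkscrew points approaching the boundary. This requires carefully chaining the maximum principle across the corkscrew balls while keeping the multiplicative constants uniform, and it is exactly where the full force of the NTA axioms (both corkscrew \emph{and} Harnack chains) is used. Everything else — interior Harnack, the final summation of a geometric series, and the symmetrization in Step 2 — is routine once Step 1 is in hand. Since \cite{Jerison-Kenig} carries this out in detail for bounded NTA domains and every estimate involved is scale-invariant and local, I would simply cite their argument and remark, as the paper already does, that it applies unchanged to unbounded NTA domains.
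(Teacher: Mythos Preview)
The paper does not prove this lemma; it is quoted directly from \cite[Lemma 4.10]{Jerison-Kenig}, with only the remark that the arguments there are purely local and hence carry over to unbounded NTA domains. Your bottom line --- cite Jerison--Kenig and note locality --- therefore matches the paper exactly.

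That said, the sketch you offer in Step~2 contains a genuine gap. The assertion ``$v(z)\gtrsim v(x_B)$ on $B''\cap\Omega$'' is false: $v$ vanishes continuously on $\partial\Omega$, so $v(z)/v(x_B)\to 0$ as $z$ approaches the boundary, and no uniform lower bound of that form can hold on all of $B''\cap\Omega$. The boundary Harnack principle is \emph{not} obtained by combining an upper bound for $u$ with a separate lower bound for $v$; the whole content of the result is that although neither function individually admits a two-sided bound near $\partial\Omega$, their ratio does. Even the softer reading ``$v(z)\gtrsim v(x_B)\cdot\varphi(z)$ for some barrier $\varphi$'' does not help unless you also know $u(z)\lesssim u(x_B)\cdot\varphi(z)$ for the \emph{same} $\varphi$ --- and establishing that is equivalent to the boundary Harnack principle itself. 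The actual Jerison--Kenig argument normalizes $u(x_B)=v(x_B)$, uses the Carleson estimate (your Step~1, which is fine) to get $u,v\lesssim 1$ on $B\cap\Omega$, and then proves the comparison for $u/v$ by an iterative scheme across dyadic scales (equivalently, by comparison of each with the Green function together with the doubling property of harmonic measure). If you wish to sketch the proof rather than simply cite it, Step~2 should be replaced by that iteration.
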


\vv
\begin{proof}[Proof of Proposition \ref{proporiesz}]
As shown in \cite[Section 3]{Kenig-Toro-annals}, the Green function $u$ and the harmonic measure $\omega$ 
with pole at infinity can be constructed as follows. Given a fixed point $a\in \Omega$ and a sequence of points $p_j\in\Omega$ such that
$p_j\to\infty$, we consider the function
$$u_j(x) = \left\{\begin{array}{ll} \dfrac{g(x,p_j)}{g(a,p_j)}& \quad \mbox{ if $x\in\Omega$},\\{}\\
0& \quad \mbox{ if $x\not\in\overline\Omega$,}\end{array}\right.
$$
and the measure
$$\omega_j = \frac1{g(a,p_j)}\,\omega^{p_j}.$$
Passing to a subsequence and relabelling if necessary, we may assume that $u_j$ is locally uniformly convergent and that $\omega_j$ is weakly
convergent. Then it turns out that $u$ is the weak limit of the sequence $u_j$ and $\omega$ is the
weak limit of $\omega_j$. 
For simplicity, we choose points $p_j$ such that $|p_j-a|\approx \dist(p_j,\d\Omega)\to\infty$. Observe that \eqn{wsimg} and our definitions of $u$ and $\omega$, it follows that for all balls $B$ centered on $\d\Omega$, if $x_{B}$ is a corkscrew point for $B$ in $\Omega$, then
\begin{equation}\label{e:wsimu}
\omega(B)r^{1-n}\approx u(x_{B}).
\end{equation}


It is well known that the Green function $g(\cdot,\cdot)$ equals
$$g(x,p) = \EE(x-p) - \int \EE(x-z)\,d\omega^p(z)\qquad \mbox{for $x,p\in\Omega$,}$$
where $\EE$ stands for the fundamental solution of the Laplacian.
On the other hand, the right hand side above vanishes if $x\in\R^{n+1}\setminus\overline\Omega$, $p\in\Omega$. So we deduce that for all $x\not \in\partial\Omega$,
$$\nabla u_j(x) = \frac{1}{g(a,p_j)}\,K(x-p_j) - \RR\omega_j(x).$$
Thus, for all $x,y\not \in\partial\Omega$,
$$\nabla u_j(y)  - \nabla u_j(x) = \frac{1}{g(a,p_j)}\,\bigl(K(y-p_j) - K(x-p_j)\bigr)
 + \RR\omega_j(x) - \RR\omega_j(y).$$
Since $u_j$ is harmonic out of $\partial\Omega$ and $u_j$ converges locally uniformly to $u$, it turns out that $\nabla u_j$ converges also locally uniformly to $\nabla u$ out of $\partial\Omega$. Hence to prove the
proposition it suffices to show that
\begin{equation}\label{eqlim1}
\lim_{j\to\infty}\frac{1}{g(a,p_j)}\,\bigl(K(y-p_j) - K(x-p_j)\bigr)=0
\end{equation}
and
\begin{equation}\label{eqlim2}
\lim_{j\to\infty}\bigl(\RR\omega_j(x) - \RR\omega_j(y)\bigr)=\RR\omega(x) - \RR\omega(y).
\end{equation}

To prove the above identities first we will estimate $g(a,p_j)$ in terms of $u$ and $\omega$. To this end,
we will apply the boundary Harnack principle. 

Let $\xi_j\in\partial\Omega$ be such that $|\xi_j-p_j|=
\dist(p_j,\partial\Omega)$, and consider the ball $B(p_j)=B(\xi_j,|\xi_j-p_j|)$. Suppose that
 $|\xi_j-p_j|\gg \dist(a,\partial\Omega)$. 
Consider a corkscrew point $\wt p_j
\in\frac12 B(p_j)\cap \Omega$, so that $\dist(\wt p_j,\partial\Omega)\approx r(B(p_j))$. Since $u$ and $g(\cdot,p_j)$ are harmonic in $\Omega\cap B(p_j)$ and vanish identically in $\partial\Omega$, 
we deduce  from \eqn{bharnack} that
$$\frac{g(\wt p_j,p_j)}{g(a,p_j)} \approx \frac{u(\wt p_j)}{u(a)},$$
since $a$ belongs to $C\,B(p_j)$, for some fixed constant $C$, and $\dist(a,\partial\Omega)\ll r(B(p_j))$ by
assumption. Taking into account that by \eqn{wsimu}
$$u(\wt p_j)\approx u(p_j)\approx \omega(B(p_j))\,|p_j-\xi_j|^{1-n}\approx \omega(B(p_j))\,|p_j-a|^{1-n}$$
and
$$g(\wt p_j,p_j)\approx \frac1{|\wt p_j- p_j|^{n-1}} \approx 
\frac1{|p_j-a|^{n-1}},$$ we infer that
\begin{equation}\label{eqga1}
g(a,p_j) \approx \frac{u(a)}{\omega(B(p_j))}.
\end{equation}

With \rf{eqga1} at hand, we are ready to prove \rf{eqlim1}:
$$\frac{1}{g(a,p_j)}\,\bigl|K(y-p_j) - K(x-p_j)\bigr| \lesssim
\frac{\omega(B(p_j))}{u(a)}\,\frac{|x-y|}{|x-p_j|^{n+1}}.$$
For $j$ big enough, we have $r(B(p_j))\approx |x-p_j|$, and then we derive
$$\frac{\omega(B(p_j))}{|x-p_j|^{n+1}} \lesssim_x \frac{|x-p_j|^{n+\delta}}{|x-p_j|^{n+1}} = 
\frac1{|x-p_j|^{1-\delta}},$$
and thus
$$\frac{1}{g(a,p_j)}\,\bigl|K(y-p_j) - K(x-p_j)\bigr| \lesssim_x
\frac{|x-y|}{u(a)}\,\frac1{|x-p_j|^{1-\delta}} \to 0 \quad \mbox{ as $j\to\infty$.}$$

We turn our attention to the identity \rf{eqlim2} now. Take an auxiliary radial $C^\infty$ function $\vphi:\R^{n+1}\to\R$ such that $\chi_{B(0,1)}\leq \vphi\leq\chi_{B(0,2)}$ and denote $\vphi_\ve(z) = \vphi(z/\ve)$.
For $\ve>0$, we denote 
$$K_\ve = (1-\vphi_\ve)\,K\quad \mbox{ and }\quad  \wt K_\ve = \vphi_\ve\,K.$$
Notice that $K_\ve$ and $\wt K_\ve$ are standard Calder\'on-Zygmund kernels. We denote by $\RR_\ve$ and
$\wt\RR_\ve$ the respective associated operators, so that, at least formally, $\wt \RR_\ve$ tends to $\RR$ as $\ve\to\infty$. Then we write
\begin{align}\label{eq**1}
\bigl|\bigl(\RR\omega_j(x) - \RR\omega_j(y)\bigr)- \bigl(\RR\omega&(x) - \RR\omega(y)\bigr)\bigr|\\
& \leq 
\bigl|\bigl(\wt \RR_\ve\omega_j(x) - \wt\RR_\ve\omega_j(y)\bigr)- \bigl(\wt\RR_\ve\omega(x) - 
\wt\RR_\ve\omega(y)\bigr)\bigr| \nonumber
\\ &\quad
+ \bigl| \RR_\ve\omega_j(x) - \RR_\ve\omega_j(y)\bigr| + \bigl|\RR_\ve\omega(x) - 
\RR_\ve\omega(y)\bigr|.\nonumber
\end{align}
Since the function $\wt K_\ve(x-\cdot)- \wt K_\ve(y-\cdot)$ is continuous on $\partial\Omega$ (recall that $x,y\in\R^{n+1}\setminus \partial \Omega$) and has compact support, we infer that 
\begin{equation}\label{eq**1.5}
\bigl|\bigl(\wt \RR_\ve\omega_j(x) - \wt\RR_\ve\omega_j(y)\bigr)- \bigl(\wt\RR_\ve\omega(x) - 
\wt\RR_\ve\omega(y)\bigr)\bigr|\to 0 \quad \mbox{ as $j\to\infty$,}
\end{equation}
by the weak convergence of $\omega_j$ to $\omega$.

Concerning the second term on the right hand side of \rf{eq**1}, we will show below that
\begin{equation}\label{eq**2}
\bigl| \RR_\ve\omega_j(x) - \RR_\ve\omega_j(y)\bigr|\lesssim_x
\frac{|x-y|}{u(a)}\,\left(\frac1{\ve^{1-\delta}} + \frac1{|x-p_j|^{1-\delta}}\right)
\end{equation}

The last term in \rf{eq**1} is estimated as in \rf{eq**0}. Indeed, for $\ve\gg|x-y|$,
\begin{align}\label{eq**3}
\bigl|\RR_\ve\omega(x) - 
\RR_\ve\omega(y)\bigr| & \lesssim
\int \bigl| K_\ve(x-z) -  K_\ve(y-z)\bigr|\,d\omega(z)\\
 & \lesssim \int_{|x-z|\geq \ve/2} \frac{|x-y|}{|x-z|^{n+1}} \,d\omega(z)\nonumber \\
& \lesssim \sum_{k\geq 0} \frac{|x-y|}{\bigl(2^k\ve\bigr)^{n+1}}\,\omega(B(x,2^k\ve))\nonumber\\
&   \lesssim_x \sum_{k\geq 0} \frac{|x-y|}{\bigl(2^k\ve\bigr)^{n+1}}\,(2^{k}\ve)^{n+\delta}
\approx \frac{|x-y|}{\ve^{1-\delta}}.\nonumber
\end{align}

From \rf{eq**1}, \rf{eq**1.5}, \rf{eq**2} and \rf{eq**3} we deduce that
$$\limsup_{j\to\infty}\bigl|\bigl(\RR\omega_j(x) - \RR\omega_j(y)\bigr)- \bigl(\RR\omega(x) - \RR\omega(y)\bigr)\bigr| \lesssim_x \frac{|x-y|}{u(a)\,\ve^{1-\delta}} + \frac{|x-y|}{\ve^{1-\delta}}.$$
Since this holds for any arbitrarily big $\ve>0$, the limit vanishes and this concludes the proof of
the identity \rf{eqriesz}.

Finally we deal with the estimate \rf{eq**2}. Arguing as in \rf{eq**3}, with $\omega$ replaced by $\omega_j$,
we obtain
$$
\bigl| \RR_\ve\omega_j(x) - \RR_\ve\omega_j(y)\bigr|
\lesssim \sum_{k\geq 0} \frac{|x-y|}{\bigl(2^k\ve\bigr)^{n+1}}\,\omega_j(B(x,2^k\ve))
.$$ 
We split the last sum according to whether $2^k\ve \leq |p_j-x|$ or not, so that
$$\bigl| \RR_\ve\omega_j(x) - \RR_\ve\omega_j(y)\bigr|\leq S_1 + S_2,$$
where
$$S_1= \!\!\!\sum_{\substack{k\geq 0\\2^k\ve\leq |p_j-x|}}\!
 \frac{|x-y|}{\bigl(2^k\ve\bigr)^{n+1}}\,\omega_j(B(x,2^k\ve))
 \quad\! \mbox{ and } \!\quad S_2=\!\!\!\sum_{\substack{k\geq 0\\2^k\ve > |p_j-x|}}\!
 \frac{|x-y|}{\bigl(2^k\ve\bigr)^{n+1}}\,\omega_j(B(x,2^k\ve)).$$
To estimate $S_1$ we use the fact that, for $2^k\ve\leq |p_j-x|$,
$$\omega_j(B(x,2^k\ve)) = \frac1{g(a,p_j)}\,\omega^{p_j}(B(x,2^k\ve)) \approx 
\frac1{g(a,p_j)}\,\frac{\omega(B(x,2^k\ve))}{\omega(B(p_j))}. 
$$
Hence, by \rf{eqga1},
$$\omega_j(B(x,2^k\ve)) \approx\frac{\omega(B(x,2^k\ve))}{u(a)},$$
and so 
$$S_1\lesssim\sum_{k\geq 0}
 \frac{|x-y|}{\bigl(2^k\ve\bigr)^{n+1}}\,\frac{\omega(B(x,2^k\ve))}{u(a)} \lesssim_x 
 \sum_{k\geq 0}
 \frac{|x-y|}{\bigl(2^k\ve\bigr)^{n+1}}\,\frac{(2^k\ve)^{n+\delta}}{u(a)} \lesssim 
  \frac{|x-y|}{u(a)\,\ve^{1-\delta}}.$$

To estimate $S_2$ we use the following trivial estimate:
$$\omega_j(B(x,2^k\ve)) = \frac1{g(a,p_j)}\,\omega^{p_j}(B(x,2^k\ve)) \leq \frac1{g(a,p_j)} \approx
\frac{\omega(B(p_j))}{u(a)}. 
$$
Therefore,
$$ S_2\approx\!\sum_{\substack{k\geq 0\\2^k\ve > |p_j-x|}}\!
 \frac{|x-y|}{\bigl(2^k\ve\bigr)^{n+1}}\,\frac{\omega(B(p_j))}{u(a)}
\lesssim  \frac{|x-y|}{|p_j-x|^{n+1}}\,\frac{\omega(B(p_j))}{u(a)}.$$
Assuming that $|p_j-x|\geq 1$, we have
$$\omega(B(p_j))\lesssim_x r(B(p_j))^{n+\delta}\approx |p_j-x|^{n+\delta},$$
and thus
$$S_2\lesssim_x \frac{|x-y|}{|p_j-x|^{1-\delta}}\,\frac{\omega(B(p_j))}{u(a)}.$$
From this estimate and the one for $S_1$, we obtain \rf{eq**2}, as wished. 
\end{proof}

\vv
We recall now the following version of the jump equations for the gradient of the single layer potential due to Hofmann, Mitrea and Taylor \cite{HMT}:

\begin{propo}[{\cite[Proposition 3.30]{HMT}}]\label{propoplem}
Let $\Omega\subset\R^{n+1}$ be a domain in $\R^{n+1}$ with uniformly rectifiable boundary such that 
$\sigma(\partial\Omega\setminus \partial_*\Omega)=0$, where $\partial_*\Omega$ stands for the measure theoretic boundary and $\sigma$ for the surface measure of $\Omega$.
Let $f\in L^p(\sigma|_{\partial\Omega})$, for $1<p<\infty$. Then, for $\sigma$-a.e.\ $x\in\partial\Omega$,
\begin{equation}\label{eqplem1}
\lim_{\Gamma^-(x)\ni z\to x} \RR (f\sigma)(z) = -\frac12\,\vec{n}(x)\,f(x) +{\rm pv}\,\RR (f\sigma)(x)
\end{equation}
and
\begin{equation}\label{eqplem2}
\lim_{\Gamma^+(x)\ni z\to x} \RR(f\sigma)(z) = \frac12\,\vec{n}(x)\,f(x) +{\rm pv}\,\RR (f\sigma)(x)
\end{equation}
where $\Gamma^+(x)$ is a non-tangential cone at $x$ relative to $\Omega$, (that is,
\[
\Gamma^{+}(x)=\{y\in \Omega: \dist(y,\Omega^{c})>t |y-x|\}\]
for some $t>0$), $\Gamma^-(x)$ is a non-tangential cone at $x$ relative to 
$\R^{n+1}\setminus \overline\Omega$, and $\vec{n}(x)$ is the outer normal to $\Omega$ at $x$.
\end{propo}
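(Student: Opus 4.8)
\medskip
\noindent\emph{Strategy of proof of Proposition~\ref{propoplem}.}
The plan is to adapt the classical argument for jump relations of layer potentials (in the spirit of Coifman--McIntosh--Meyer and Verchota) to the uniformly rectifiable setting via the David--Semmes theory of singular integrals; this is essentially the route of \cite{HMT}. In a first step I would record the harmonic-analytic machinery: since $\partial\Omega$ is $n$-AD-regular and uniformly rectifiable, the truncated Riesz transforms $\RR_\varepsilon$ are bounded on $L^p(\sigma)$ uniformly in $\varepsilon$, and so is the maximal operator $\RR_*f=\sup_{\varepsilon>0}|\RR_\varepsilon f|$; by a Cotlar-type inequality together with AD-regularity, the non-tangential maximal function satisfies $\NN\bigl(\RR(f\sigma)\bigr)\lesssim \RR_*f+M_\sigma f$ pointwise, where $M_\sigma$ is the Hardy--Littlewood maximal operator with respect to $\sigma$, so $f\mapsto \NN\bigl(\RR(f\sigma)\bigr)$ is also bounded on $L^p(\sigma)$; and on a uniformly rectifiable set ${\rm pv}\,\RR(f\sigma)(x)$ exists for $\sigma$-a.e.\ $x$ and every $f\in L^p(\sigma)$ (first for Lipschitz $f$ with compact support, for which it reduces to the $\sigma$-a.e.\ existence of ${\rm pv}\,\RR\sigma$, then for general $f$ via the $L^p$ boundedness of $\RR_*$). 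Granting these facts, a density argument reduces the proposition to $f$ Lipschitz with compact support: the quantities $\lim_{\Gamma^{\pm}(x)}\RR(f\sigma)(z)$ and ${\rm pv}\,\RR(f\sigma)(x)$ are all controlled in $L^p(\sigma)$ by $\RR_*f+M_\sigma f$, so $\sigma$-a.e.\ convergence for a dense family of $f$ propagates to all of $L^p(\sigma)$.

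Next I would localize at a good boundary point. Fix $f$ Lipschitz with compact support and $x\in\partial_*\Omega$ which is a Lebesgue point of $f$, at which ${\rm pv}\,\RR(f\sigma)(x)$ exists, and at which $\partial\Omega$ has an approximate tangent $n$-plane $P=x+\vec n(x)^\perp$ --- so that the rescalings $\sigma_{x,r}:=r^{-n}(T_{x,r})_{\#}\sigma$, with $T_{x,r}(y)=(y-x)/r$, converge weakly to $\HH^n|_P$ as $r\to0$; by $\sigma(\partial\Omega\setminus\partial_*\Omega)=0$, the structure theory of rectifiable sets, and the first step, $\sigma$-a.e.\ $x$ is of this kind. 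Given $z\in\Gamma^{\pm}(x)$, set $\delta=|z-x|$, so $\dist(z,\partial\Omega)\approx\delta$, choose an intermediate radius $\rho=\rho(\delta)$ with $\delta\ll\rho\to0$ (say $\rho=\delta^{1/2}$), and split $\RR(f\sigma)(z)=A_\rho(z)+B_\rho(z)$, with $A_\rho$ the integral over $\{|y-x|>\rho\}$ and $B_\rho$ that over $\{|y-x|\le\rho\}$. In $A_\rho$ I would replace $K(z-y)$ by $K(x-y)$; since $|K(z-y)-K(x-y)|\lesssim\delta\,|x-y|^{-n-1}$ there, AD-regularity and boundedness of $f$ make the error $O(\delta/\rho)\to0$, while $\int_{|y-x|>\rho}K(x-y)f\,d\sigma\to {\rm pv}\,\RR(f\sigma)(x)$ as $\rho\to0$ by the choice of $x$. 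Thus $A_\rho(z)\to {\rm pv}\,\RR(f\sigma)(x)$, and what remains is to show $B_\rho(z)\to\pm\tfrac12\,\vec n(x)\,f(x)$ as $z\to x$ in $\Gamma^{\pm}(x)$.

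For the jump term I would first use $|f(y)-f(x)|\lesssim|y-x|$ to replace $f(y)$ by $f(x)$ in $B_\rho$, the error being $\lesssim{\rm Lip}(f)\,\rho\to0$ (sum $|z-y|^{-n}|y-x|$ over the dyadic annuli between scales $\delta$ and $\rho$). Then, rescaling by $\delta$ and using that $K$ is $(-n)$-homogeneous, $B_\rho(z)$ becomes $f(x)\int_{|w|\le\rho/\delta}K(\zeta-w)\,d\sigma_{x,\delta}(w)$, where $|\zeta|=1$ and $\dist(\zeta,T_{x,\delta}(\partial\Omega))\approx1$; since $x$ has an approximate tangent plane we replace $\sigma_{x,\delta}$ by $\HH^n|_P$, and since $\rho/\delta\to\infty$ the domain of integration fills out all of $P$. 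Finally a direct computation of the flux of $\nabla\EE$ through the hyperplane $P$ gives $\int_P K(\zeta-w)\,d\HH^n(w)=\pm\tfrac12\,\vec n(x)$: the components tangent to $P$ vanish by odd symmetry, and the normal component equals a dimensional constant times $\int_0^\infty r^{n-1}(r^2+1)^{-(n+1)/2}\,dr=\tfrac12$ after rescaling by $\dist(\zeta,P)$, with sign $+$ when $z\to x$ through $\Gamma^{+}(x)$ and $-$ through $\Gamma^{-}(x)$. Together with $A_\rho(z)\to{\rm pv}\,\RR(f\sigma)(x)$ this yields \eqref{eqplem1}--\eqref{eqplem2}.

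The flux computation is the clean, classical core; the real difficulties lie elsewhere. One is the input of the first step --- in particular the $\sigma$-a.e.\ existence of principal values of $\RR(f\sigma)$ on a uniformly rectifiable set and the non-tangential maximal estimate, both of which require the full David--Semmes machinery. The other, which I expect to be the main obstacle, is making rigorous the replacement of $\sigma_{x,\delta}$ by $\HH^n|_P$ in the near term uniformly up to the scale $\rho/\delta\to\infty$: because the Riesz kernel decays only like $|z-y|^{-n}$ (so it fails to be absolutely integrable against $\HH^n|_P$ over all of $P$, and is large at the scale $\dist(z,\partial\Omega)$), one cannot merely invoke closeness of $\partial\Omega$ to $P$ in Hausdorff distance, and must instead control the contribution of the part of $\partial\Omega\cap B(x,\rho)$ deviating from $P$ through its small $\sigma$-measure combined with the $\sigma$-a.e.\ finiteness of $\RR_*$ and of $\NN\bigl(\RR(\,\cdot\,\sigma)\bigr)$.
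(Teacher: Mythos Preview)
The paper does not contain a proof of this proposition: it is quoted verbatim as \cite[Proposition~3.30]{HMT} and used as a black box (the only comment following the statement is that chord-arc domains satisfy the hypotheses). So there is no ``paper's own proof'' to compare against.

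Your sketch is a faithful outline of the classical argument as carried out in \cite{HMT}: $L^p$ and maximal bounds for the Riesz transform on uniformly rectifiable sets, reduction to a dense class by the non-tangential maximal estimate, the far/near splitting with the far part reproducing the principal value and the near part yielding the half-space flux computation. Two remarks. First, the $\sigma$-a.e.\ existence of ${\rm pv}\,\RR(f\sigma)$ on a uniformly rectifiable set is not a consequence of the David--Semmes $L^2$ theory alone; it requires in addition the rectifiability of $\partial\Omega$ together with results on a.e.\ existence of principal values on rectifiable sets (Mattila--Melnikov--Verdera for $n=1$, Tolsa in general), so you should cite those rather than ``the full David--Semmes machinery''. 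Second, you are right that the delicate point is the passage from $\sigma_{x,\delta}$ to $\HH^n|_P$ in the near term with the non-integrable kernel; in \cite{HMT} this is handled not by a direct tangent-measure argument but by first proving the jump formulas on Lipschitz graphs (where the calculation is elementary) and then transferring them to the uniformly rectifiable setting via a big-pieces/corona decomposition, which sidesteps exactly the uniformity issue you flag. Either route works, but the decomposition approach is cleaner to make rigorous.
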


In particular, if $\Omega$ is a chord-arc domain in $\R^{n+1}$, then $\partial\Omega$ is uniformly rectifiable (see \cite{DJ}) and $\sigma(\partial\Omega\setminus \partial_*\Omega)=0$, thus the preceding proposition can be applied.

\vv

\begin{propo}\label{proponontang}
Let $\Omega\subset\R^{n+1}$ be a chord-arc domain in $\R^{n+1}$. Let $\omega$ and $u$ be the harmonic measure and the Green
function with a pole either at infinity or at some point $p\in\Omega$. 
Suppose that for each $x\in\partial\Omega$ there exist some constants
$0<\delta<1$ and $C>0$ such that
\begin{equation}\label{eqgrow'}
\omega(B(x,r))\leq C\,r^{n+\delta}\quad  \mbox{ for all  $r\geq 1$.}
\end{equation}
Suppose that $h:=\dfrac{d\omega}{d\sigma}\in L^p_{loc}(\sigma)$ for some $p>1$.
Then  $\lim_{\Gamma^+(x)\ni z\to x} \nabla u(z)$ exists for $\sigma$-a.e.\ $x\in\partial\Omega$ and
\begin{equation}\label{eq:n.t.limit}
\lim_{\Gamma^+(x)\ni z\to x} \nabla u(z) = -h(x)\,\vec{n}(x).
\end{equation} 
\end{propo}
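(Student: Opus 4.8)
The plan is to represent $\nabla u$ near $\partial\Omega$ as (minus) the gradient of a single layer potential of an $L^p(\sigma)$ density plus a harmless real-analytic remainder, and then to read off the nontangential limit from the jump relations of Proposition \ref{propoplem}, applied from \emph{both} sides of $\partial\Omega$; the vanishing of $u$ on $\R^{n+1}\setminus\overline\Omega$ is what lets us discard the principal-value term.

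First I would localize. Fix $x_0\in\partial\Omega$ and a radius $R>0$ (small enough that $p\notin B(x_0,3R)$ when the pole is at a finite point $p$), write $\omega=h\,\sigma$, and split $\omega=f\sigma+\omega'$ with $f:=h\,\chi_{B(x_0,2R)}\in L^p(\sigma)$ and $\omega':=\omega|_{\R^{n+1}\setminus B(x_0,2R)}$. The goal of this step is the identity
\[
\nabla u(z)=-\RR(f\sigma)(z)+E(z),\qquad z\in B(x_0,R)\setminus\partial\Omega,
\]
where $\RR(f\sigma)(z)=\int K(z-w)f(w)\,d\sigma(w)$ (absolutely convergent for $z\notin\partial\Omega$, by H\"older and AD-regularity) and $E$ is a vector field extending real-analytically to $B(x_0,3R/2)$. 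When the pole is at $p\in\Omega$ this drops out of $u(z)=\EE(z-p)-\int\EE(z-w)\,d\omega(w)$ (the right-hand side being $\equiv0$ on $\R^{n+1}\setminus\overline\Omega$) upon differentiating, with $E(z)=K(z-p)-\int K(z-w)\,d\omega'(w)$; when the pole is at infinity I would instead invoke Proposition \ref{proporiesz}, fixing a base point $z_0\notin\partial\Omega$ and absorbing the constants together with the tail $\int(K(z-w)-K(z_0-w))\,d\omega'(w)$ into $E$. In both cases $E$ is real-analytic across $\partial\Omega$ near $x_0$ because $\supp\omega'$ lies at distance $\geq R/2$ from $B(x_0,3R/2)$, so $K(z-w)$ is smooth there and differentiation under the integral is legitimate — this is precisely where the growth hypothesis \eqref{eqgrow'} is used in the pole-at-infinity case (it is automatic otherwise, $\omega'$ having finite mass).

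Next I would pass to the boundary. Since $\Omega$ is chord-arc, $\partial\Omega$ is uniformly rectifiable with $\sigma(\partial\Omega\setminus\partial_*\Omega)=0$, and the corkscrew condition makes the exterior cone $\Gamma^-(x)\subset\R^{n+1}\setminus\overline\Omega$ a genuine nontangential region, so Proposition \ref{propoplem} applies to $f\in L^p(\sigma)$. For $\sigma$-a.e.\ $x\in B(x_0,R)\cap\partial\Omega$ we have $f(x)=h(x)$ and $E$ is continuous at $x$; since $u\equiv0$ on $\R^{n+1}\setminus\overline\Omega$, $\nabla u$ vanishes identically on $\Gamma^-(x)$, so letting $z\to x$ through $\Gamma^-(x)$ in the representation and using \eqref{eqplem1} gives
\[
0=-\Bigl(-\tfrac12\,\vec{n}(x)h(x)+{\rm pv}\,\RR(f\sigma)(x)\Bigr)+E(x),
\]
that is, ${\rm pv}\,\RR(f\sigma)(x)-E(x)=\tfrac12\,\vec{n}(x)h(x)$. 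Feeding this into the limit from $\Gamma^+(x)$, which by \eqref{eqplem2} reads $\lim_{\Gamma^+(x)\ni z\to x}\nabla u(z)=-\bigl(\tfrac12\vec{n}(x)h(x)+{\rm pv}\,\RR(f\sigma)(x)\bigr)+E(x)$, yields exactly $-h(x)\,\vec{n}(x)$; in particular the limit exists. Covering $\partial\Omega$ by countably many balls $B(x_0,R(x_0))$ (Lindel\"of) and discarding the union of the exceptional null sets completes the proof.

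The only real obstacle is the first step: producing the splitting and checking that the ``far'' part $E$ genuinely extends continuously (even real-analytically) across $\partial\Omega$ near $x_0$. For a finite pole this is elementary, but for the pole at infinity it rests on Proposition \ref{proporiesz} — whose proof already absorbed the work of controlling $\RR\omega$ modulo constants via the growth bound \eqref{eqgrow'} — after which everything reduces to bookkeeping with the two-sided jump relations of Proposition \ref{propoplem}.
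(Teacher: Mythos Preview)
Your proposal is correct and follows essentially the same approach as the paper: localize, apply the two-sided jump relations of Proposition \ref{propoplem} to the compactly supported $L^p$ density $h\chi_{2B}$, use the growth hypothesis \eqref{eqgrow'} (via Proposition \ref{proporiesz} in the pole-at-infinity case) to see that the far part is continuous across $\partial\Omega$, and then exploit $u\equiv 0$ on $\R^{n+1}\setminus\overline\Omega$ to eliminate the principal-value term. Your organization---writing out the decomposition $\nabla u=-\RR(f\sigma)+E$ with $E$ real-analytic across $\partial\Omega$ before invoking the jump formulas---is slightly more explicit than the paper's, which instead subtracts the two limits directly, but the content is the same.
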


\begin{proof}
Assume that the pole for $\omega$ and $u$ is at infinity (the arguments for the case where the pole is finite are analogous).
Let $B$ be a ball centered at $\partial\Omega$. By Proposition \ref{propoplem}, for $\sigma$-a.e.\ $x\in B$,
$$
\lim_{\Gamma^-(x)\ni z\to x} \RR (\chi_{2B}\omega)(z) = -\frac12\,\vec{n}(x)\,h(x) +{\rm pv}\RR (\chi_{2B}\omega)
$$
and
$$
\lim_{\Gamma^+(x)\ni z\to x} \RR (\chi_{2B}\omega)(z) = \frac12\,\vec{n}(x)\,h(x) +{\rm pv}\RR (\chi_{2B}\omega)
$$
In particular,
$$\lim_{\Gamma^+(x)\ni z\to x} \RR (\chi_{2B}\omega)(z) - \lim_{\Gamma^-(x)\ni z\to x} \RR (\chi_{2B}\omega)(z) = \vec{n}(x)\,h(x).$$
Using the condition \rf{eqgrow'}, by estimates analogous to the ones in \rf{eq**0},
it is immediate to check that
$$\lim_{\Gamma^+(x)\ni z\to x} \RR (\chi_{2B}\omega)(z) - \lim_{\Gamma^-(x)\ni z\to x} \RR (\chi_{2B}\omega)(z) = 
\lim_{\Gamma^+(x)\ni z\to x} \RR \omega(z) - \lim_{\Gamma^-(x)\ni z\to x} \RR \omega(z).$$
Then, by Proposition \ref{proporiesz} we infer that
$$\lim_{\Gamma^-(x)\ni z\to x} \nabla u (z) - \lim_{\Gamma^+(x)\ni z\to x} \nabla u(z) = \vec{n}(x)\,h(x).$$
Since $u\equiv 0$ in $\R^{n+1}\setminus \overline \Omega$, we have $\lim_{\Gamma^-(x)\ni z\to x} \nabla u (z)=0$ and so
$$- \lim_{\Gamma^+(x)\ni z\to x} \nabla u(z) = \vec{n}(x)\,h(x) \quad \mbox{for $\sigma$-a.e.\ $x\in \partial\Omega\cap B$.}$$
\end{proof}

\vvv


\section{Some technical lemmas}\label{sec3}\label{section4}

From now on, given a domain $\Omega\subset\R^{n+1}$ and $x\in\R^{n+1}$, we denote
$$d_\Omega(x) = \dist(x,\Omega^c).$$

The following is a well known result. See for example \cite[Section 4]{Jerison-Kenig}.

\begin{lemma}\label{lemholder}
Let $\Omega\subset\R^{n+1}$ be an NTA domain and let $B$ be a ball centered at $\partial\Omega$. There exist some constants $C,\alpha>0$ depending on the NTA character of $\Omega$ such that the following holds.
If $u$ is a non-negative harmonic function on $\Omega\cap 2B$ which vanishes continuously on $\partial \Omega\cap2B$, then
$$u(x)\leq C\,\left(\frac{d_\Omega(x)}{r(B)}\right)^\alpha\,\sup_{y\in\partial( 2B)\cap\Omega}u(y) \quad\mbox{ for all $x\in B\cap\Omega$}.$$
If $x_{B}$ is a corkscrew point for $B$, then
\[
\sup_{y\in B\cap\Omega}u(y) \leq C u(x_{B}).\]
\end{lemma}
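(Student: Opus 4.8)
The statement to prove is Lemma~\ref{lemholder}: two estimates for a nonnegative harmonic function $u$ on $\Omega\cap 2B$ vanishing continuously on $\partial\Omega\cap 2B$, namely the H\"older-type decay $u(x)\lesssim (d_\Omega(x)/r(B))^\alpha\sup_{\partial(2B)\cap\Omega}u$ and the doubling-type bound $\sup_{B\cap\Omega}u\lesssim u(x_B)$.

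\textbf{Plan.} The second estimate is the standard Harnack-chain consequence of the corkscrew condition, so I would dispose of it first. Given any $y\in B\cap\Omega$, I want to bound $u(y)$ by $u(x_B)$. If $d_\Omega(y)$ is comparable to $r(B)$ this is immediate from a single application of the interior Harnack inequality together with the Harnack chain condition (both $y$ and $x_B$ lie deep inside $\Omega$ at scale $\approx r(B)$ and within bounded distance, so there is a Harnack chain of controlled length joining them, giving $u(y)\approx u(x_B)$). The trouble is points $y$ close to $\partial\Omega$; but for those we do not need a two-sided comparison, only the upper bound, and this is exactly where the first (H\"older) estimate does the work: once we know $u(x)\lesssim (d_\Omega(x)/r(B))^\alpha M$ with $M=\sup_{\partial(2B)\cap\Omega}u$, and once we know (again by Harnack chains from $\partial(2B)\cap\Omega$ to $x_B$, all at scale $\approx r(B)$) that $M\lesssim u(x_B)$, we get $u(y)\lesssim u(x_B)$ for every $y\in B\cap\Omega$. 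So the second bullet follows from the first together with routine Harnack-chain arguments; I would present it in that order or, alternatively, just cite \cite[Section 4]{Jerison-Kenig} as the statement already does.

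\textbf{The H\"older estimate.} This is the substantive part and is proved by the classical oscillation/iteration argument. Fix $x_0\in\partial\Omega$ with $B=B(x_0,r)$. The key one-step lemma is: there is $\theta\in(0,1)$, depending only on the NTA character, such that for every $\xi\in\partial\Omega\cap B$ and every $s$ with $0<s\le r/2$,
\[
\sup_{\Omega\cap B(\xi,s/2)}u\ \le\ \theta\,\sup_{\Omega\cap B(\xi,s)}u .
\]
To prove this one-step decay: let $x_{\xi,s}$ be a corkscrew point for $B(\xi,s)$, so $d_\Omega(x_{\xi,s})\gtrsim s$. Since $u$ vanishes continuously on $\partial\Omega\cap B(\xi,s)$, the function $\sup_{\Omega\cap B(\xi,s)}u - u$ is a nonnegative harmonic function on $\Omega\cap B(\xi,s)$; comparing it with harmonic measure of $\Omega\cap B(\xi,s)$ (or using the corkscrew condition in $\Omega^c$ and Bourgain's estimate / the maximum principle) one gets that at the corkscrew point $u(x_{\xi,s})\le (1-c)\sup_{\Omega\cap B(\xi,s)}u$ for a dimensional-and-NTA constant $c>0$ — here one crucially uses that a definite portion of $B(\xi,s)$ lies outside $\Omega$, where $u$ is effectively zero on the boundary, forcing the harmonic function to lose mass. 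Then, by the Harnack chain condition, every point of $\Omega\cap B(\xi,s/2)$ that is at distance $\gtrsim s$ from $\partial\Omega$ is comparable to $u(x_{\xi,s})$, hence also $\le(1-c')\sup_{\Omega\cap B(\xi,s)}u$; and points closer to the boundary are handled by covering them with smaller balls centered on $\partial\Omega$ and recursing, or more cleanly by observing that $v:=\sup_{\Omega\cap B(\xi,s)}u-u$ satisfies a corkscrew-type lower bound that propagates. Carrying this out carefully gives the displayed one-step inequality with a uniform $\theta$.

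\textbf{Iteration and conclusion.} Given $x\in B\cap\Omega$, pick $\xi\in\partial\Omega$ with $|x-\xi|=d_\Omega(x)=:d$. Choose the integer $k\ge 0$ with $2^{-k-1}r< d\le 2^{-k}r$ roughly, i.e. $2^k\approx r/d$. Applying the one-step lemma $k$ times along the balls $B(\xi, r/2^j)$, $j=0,\dots,k-1$, I get
\[
\sup_{\Omega\cap B(\xi, r/2^{k})}u\ \le\ \theta^{\,k}\,\sup_{\Omega\cap B(\xi,r)}u\ \le\ \theta^{\,k}\,\sup_{\partial(2B)\cap\Omega}u
\]
(the last inequality by the maximum principle on $\Omega\cap 2B$, using $u=0$ on $\partial\Omega\cap 2B$). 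Since $x\in B(\xi,r/2^k)$ and $\theta^{k}=2^{k\log_2\theta}\approx (r/d)^{-\alpha}=(d/r)^{\alpha}$ with $\alpha:=\log_2(1/\theta)>0$, this yields $u(x)\le C (d_\Omega(x)/r(B))^\alpha \sup_{\partial(2B)\cap\Omega}u$, as claimed. The main obstacle is the one-step decay lemma — specifically, making quantitative the heuristic that ``$u$ loses a definite fraction of its sup when passing to a half-ball because a corkscrew-sized chunk of the ball is exterior to $\Omega$.'' This is where the NTA hypotheses (both the interior corkscrew/Harnack-chain structure and, implicitly, the exterior corkscrew needed for the boundary decay, or alternatively continuity of $u$ up to the boundary and a barrier) are used, and it is the step I would write out with the most care; everything else is bookkeeping with Harnack chains and a geometric sum. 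Since the excerpt explicitly labels this ``a well known result'' with a reference, a fully rigorous writeup could also simply invoke \cite[Section 4]{Jerison-Kenig}, but the argument above is the one I would reconstruct.
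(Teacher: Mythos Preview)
The paper gives no proof of this lemma; it simply cites \cite[Section 4]{Jerison-Kenig}. So your reconstruction is being compared against the classical Jerison--Kenig arguments. Your treatment of the first (H\"older) estimate via the one-step oscillation decay and dyadic iteration is correct and is exactly the standard argument (the one-step decay uses the exterior corkscrew, as you indicate).

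There is, however, a genuine gap in your derivation of the second inequality, the Carleson estimate $\sup_{B\cap\Omega}u\le C\,u(x_B)$. You claim that $M:=\sup_{\partial(2B)\cap\Omega}u\lesssim u(x_B)$ ``by Harnack chains from $\partial(2B)\cap\Omega$ to $x_B$, all at scale $\approx r(B)$.'' But points of $\partial(2B)\cap\Omega$ can lie arbitrarily close to $\partial\Omega$; they are \emph{not} all at scale $\approx r(B)$, and a Harnack chain from such a point to $x_B$ has length growing like $\log\bigl(r(B)/d_\Omega(y)\bigr)$, which yields no uniform bound. Your argument is in fact circular: you control $u(y)$ for $y$ near $\partial\Omega$ by $(d_\Omega(y)/r)^\alpha M$, but controlling $M$ is essentially the Carleson estimate on the ball $2B$.

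The actual proof in \cite{Jerison-Kenig} is a separate iteration by contradiction: normalize $u(x_B)=1$ and suppose $u(y_0)>N$ at some $y_0\in B\cap\Omega$; combining the H\"older decay with Harnack one produces $y_1$ in a slightly larger ball with $u(y_1)>M\cdot N$ for a fixed $M>1$, and iterating gives $u(y_k)>M^kN$ inside a fixed enlargement of $B$, contradicting boundedness of $u$ there. So the H\"older estimate is indeed an ingredient, but the passage to the Carleson estimate is not the ``routine Harnack-chain'' step you describe.
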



\vv
We will also need the next auxiliary result.

\begin{lemma}\label{lemgreen*}
Let $\Omega\subset\R^{n+1}$ be an NTA domain. There exists some constants $C,\alpha>0$ depending on the NTA character of $\Omega$ such that the Green function of $\Omega$ satisfies:
\begin{equation}\label{eqgreen*}
g(x,y)\leq C\,\frac1{|x-y|^{n-1}}\,\left(\frac{\min\bigl(d_\Omega(x),d_\Omega(y)\bigr)}{|x-y|}\right)^\alpha 
\quad \mbox{ for all $x,y\in\Omega$.}
\end{equation}
\end{lemma}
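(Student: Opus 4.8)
**Proof proposal for Lemma \ref{lemgreen*}.**

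The plan is to combine the two estimates of Lemma \ref{lemholder} (H\"older decay at the boundary and the corkscrew sup bound) with the standard pointwise bound $g(x,y)\lesssim |x-y|^{1-n}$ that holds in any domain by comparison with the fundamental solution. Fix $x,y\in\Omega$ and set $R=|x-y|$; by symmetry of the Green function we may assume $d_\Omega(x)\le d_\Omega(y)$, so we must bound $g(x,y)$ by $C\,R^{1-n}\,(d_\Omega(x)/R)^\alpha$. If $d_\Omega(x)\gtrsim R$ there is nothing to prove, since the factor $(d_\Omega(x)/R)^\alpha$ is then $\approx 1$ and $g(x,y)\lesssim R^{1-n}$ is the classical bound; so we may assume $d_\Omega(x)\le c\,R$ for a small dimensional constant $c$.

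In that regime, pick a boundary point $\xi\in\partial\Omega$ with $|\xi-x|=d_\Omega(x)$ and consider the ball $B=B(\xi,r)$ with $r\approx R$ chosen so that $x\in B$ but $y\notin 2B$ (this is possible precisely because $d_\Omega(x)\le cR$ with $c$ small). The function $z\mapsto g(z,y)$ is non-negative and harmonic on $\Omega\cap 2B$ (as $y\notin 2B$) and vanishes continuously on $\partial\Omega\cap 2B$, so Lemma \ref{lemholder} applies and gives
\[
g(x,y)\le C\left(\frac{d_\Omega(x)}{r(B)}\right)^\alpha \sup_{z\in \partial(2B)\cap\Omega} g(z,y).
\]
It remains to control the supremum by $C\,R^{1-n}$. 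For $z\in\partial(2B)\cap\Omega$ we have $|z-y|\approx R$, and again $g(z,y)\lesssim |z-y|^{1-n}\approx R^{1-n}$ by the comparison with the fundamental solution; alternatively one may route this through the corkscrew bound of Lemma \ref{lemholder} applied to $g(\cdot,y)$ on a suitable ball and then Harnack. Plugging this in and using $r(B)\approx R=|x-y|$ yields
\[
g(x,y)\lesssim \left(\frac{d_\Omega(x)}{|x-y|}\right)^\alpha \frac{1}{|x-y|^{n-1}},
\]
which is the claim after restoring $d_\Omega(x)=\min(d_\Omega(x),d_\Omega(y))$.

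The main point requiring a little care — the ``hard part'', though it is routine — is the bookkeeping of radii and the choice of $B$: one needs $x\in B$, $y\notin 2B$, and $r(B)\approx|x-y|$ simultaneously, which forces the preliminary reduction to the case $d_\Omega(x)\le c|x-y|$ with $c$ small depending only on the NTA constants. The other mildly delicate point is justifying $\sup_{z\in\partial(2B)\cap\Omega} g(z,y)\lesssim |x-y|^{1-n}$ uniformly; this is where one invokes either the elementary upper bound $g(\cdot,y)\le \EE(\cdot-y)$ (valid since $\EE(\cdot-y)$ is a positive superharmonic majorant vanishing at infinity and $g$ is the least such) or a combination of the corkscrew estimate and the Harnack chain condition. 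Everything else is a direct substitution.
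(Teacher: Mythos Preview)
Your proof is correct and follows essentially the same approach as the paper: reduce by symmetry and the trivial bound $g(x,y)\lesssim |x-y|^{1-n}$ to the case $d_\Omega(x)\ll |x-y|$, then apply Lemma~\ref{lemholder} to $g(\cdot,y)$ on a ball $B=B(\xi,r)$ centered at the nearest boundary point with $r\approx |x-y|$ so that $x\in B$ and $y\notin 2B$, and bound the boundary supremum via $g(z,y)\lesssim |z-y|^{1-n}\approx |x-y|^{1-n}$. The paper simply makes the explicit choice $r=|x-y|/8$ (with the reduction $|x-y|>10\,d_\Omega(x)$), which is exactly the ``bookkeeping of radii'' you describe.
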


\begin{proof}
It is enough to show that, for some $C,\alpha'>0$,
\begin{equation}\label{eqgreen*2}
g(x,y)\leq C\,\frac1{|x-y|^{n-1}}\,\left(\frac{d_\Omega(x)}{|x-y|}\right)^{\alpha}
\quad \mbox{ for all $x,y\in\Omega$,}
\end{equation}
because then the analogous inequality interchanging $x$ by $y$ also holds, by symmetry. 

Because of the trivial estimate $g(x,y)\lesssim 1/|x-y|^{n-1}$, to prove \rf{eqgreen*2} we may assume that $|x-y|>10\,d_\Omega(x)$. Let $\xi_x\in\partial\Omega$ be such that $|\xi_x-x| = d_\Omega(x)$ and consider the ball $B:=B(\xi_x,|x-y|/8)$. 
Clearly $x\in B$, as 
$$|x-\xi_x|=d_\Omega(x)\leq \frac1{10}\,|x-y| = \frac8{10}\,r(B).$$
Note also that, for all $z\in\partial(2B)$,
$$|y-z|\geq |x-y|-|x-z| \geq 8\,r(B) - 4\,r(B)= 4\,r(B)\approx |x-y|.$$
Hence $g(z,y)\lesssim 1/|y-z|^{n-1}\lesssim 1/|x-y|^{n-1}$ for all $z\in\partial(2B)$. Thus, \rf{eqgreen*2} follows from Lemma \ref{lemholder} applied to the function $g(\cdot,y)$.
\end{proof}

\vv

The following rather standard result is shown in \cite[Theorem 2.1]{Kenig-Toro2}.

\begin{lemma}\label{lemvmo}
Let $\Omega\subset\R^{n+1}$ be a chord-arc domain, $f\in \textrm{VMO}(\sigma)$, and $h=e^f$. Then, for all $x\in
\partial\Omega$, $0<r\leq\diam(\Omega)$ and $1<p<\infty$,
$$\left(\,\avint_{B(x,r)} h^p\,d\sigma\right)^{1/p} \leq C_p \;\avint_{B(x,r)} h\,d\sigma$$
and
$$\left(\,\avint_{B(x,r)} h^{-p}\,d\sigma\right)^{1/p} \leq C_p \avint_{B(x,r)} h^{-1}\,d\sigma.$$
\end{lemma}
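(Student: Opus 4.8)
The plan is to deduce this from the well-known self-improvement property of the $A_\infty$ / reverse Hölder classes, combined with the fact that $f\in\textrm{VMO}(\sigma)$ forces the oscillation of $f$ to be uniformly small at all sufficiently small scales. First I would recall that since $\sigma$ is $n$-AD-regular on $\partial\Omega$, it is a doubling measure, so the John–Nirenberg inequality holds: there is a dimensional constant $c_0>0$ such that for any ball $B=B(x,r)$ with $0<r\le\diam(\Omega)$ and any $g\in\textrm{BMO}(\sigma)$,
\[
\avint_B \exp\!\left(\frac{c_0}{\|g\|_{\textrm{BMO}}}\,\Bigl|g-\avint_B g\,d\sigma\Bigr|\right)d\sigma \le C,
\]
with $C$ depending only on the AD-regularity constant. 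The point of the VMO hypothesis is that one cannot apply this globally with a good constant, so instead I would localize: fix $p$, and use that $f\in\textrm{VMO}(\sigma)$ to choose a scale $r_0>0$ such that for every $x\in\partial\Omega$ and every $s\le r_0$ one has a BMO-type bound on $f$ over $B(x,s)$ that is small enough — concretely, small enough that $2p\|f\chi_{B(x,s)}\|_{\textrm{BMO,loc}}/c_0 \le 1$ in the John–Nirenberg estimate above.

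The main steps, in order, are as follows. \emph{Step 1 (small scales).} For $B=B(x,r)$ with $r\le r_0$, write $f_B=\avint_B f\,d\sigma$ and apply John–Nirenberg on $B$ to $f$: since the local oscillation is $\le r_0$-controllably small, we get $\avint_B e^{p(f-f_B)}\,d\sigma \le C_p$ and likewise $\avint_B e^{-p(f-f_B)}\,d\sigma\le C_p$. Multiplying the first by $e^{pf_B}$ gives $\avint_B h^p\,d\sigma \le C_p\,e^{pf_B}$; by Jensen's inequality $e^{f_B}\le \avint_B h\,d\sigma$, and since $\avint_B e^{-(f-f_B)}\,d\sigma\le C$ (again John–Nirenberg) we also get $e^{f_B}\le C\,\bigl(\avint_B h^{-1}\,d\sigma\bigr)^{-1}\le C\,\avint_B h\,d\sigma$; combining these yields the first claimed inequality at scales $r\le r_0$, and the second follows symmetrically replacing $f$ by $-f$ (note $-f\in\textrm{VMO}(\sigma)$ with the same modulus). \emph{Step 2 (large scales).} For $r_0<r\le\diam(\Omega)$ we no longer have smallness of the oscillation, but the number of scales between $r_0$ and $\diam(\Omega)$ is not bounded in general, so a naive chaining loses. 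Instead I would cover $B(x,r)$ by boundedly-overlapping balls $\{B_i\}$ of radius $r_0$ centered on $\partial\Omega$, whose number is $\lesssim (r/r_0)^n$ by AD-regularity; apply Step 1 on each $B_i$ to bound $\int_{B_i}h^p\,d\sigma \lesssim_p r_0^n\,\bigl(\avint_{B_i}h\,d\sigma\bigr)^p \le r_0^{n-p}\,\bigl(\int_{B_i}h\,d\sigma\bigr)^p$, sum over $i$ using $p\ge 1$ and $\sum a_i^p\le(\sum a_i)^p$, and divide by $\sigma(B(x,r))\approx r^n$. This gives $\avint_B h^p\,d\sigma \lesssim_{p,r_0} r^{-n}r_0^{n-p}\,\bigl(\int_B h\,d\sigma\bigr)^p = C_{p,r_0}\,r^{(p-1)n}r_0^{n-p}\bigl(\avint_B h\,d\sigma\bigr)^p$; since $r\le\diam(\Omega)$ is bounded (the statement's constants $C_p$ are allowed to depend on the NTA/AD-regularity character of $\Omega$, which controls $\diam(\Omega)$ only when $\Omega$ is bounded), this is the desired bound. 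The reciprocal inequality again follows by symmetry.

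The step I expect to be the main obstacle is handling the \emph{large scales} cleanly — specifically, making sure the constant really is allowed to depend only on $p$ and on the (implicit) character of $\Omega$. If $\Omega$ is unbounded this requires more care, since $\diam(\Omega)=\infty$ and the covering argument above degenerates; in that case one should instead invoke that $\textrm{VMO}(\sigma)$ as defined in Definition \ref{defvm00} together with AD-regularity still yields a uniform (scale-independent) reverse-Hölder constant via the characterization of VMO as the BMO-closure of bounded uniformly continuous functions, so that $h\,d\sigma$ is a uniform $A_\infty$ weight with reverse-Hölder exponent exceeding any given $p$ once the relevant approximation is fine enough. In other words, at large scales one leans on the global BMO bound for $f$ (which need not be small, but is finite and controlled) plus the quantitative self-improvement of reverse Hölder, rather than on smallness of oscillation, which is only needed to get \emph{uniformity down to scale $0$}. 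I would present Step 1 in detail and treat Step 2 by citing the standard $A_\infty$ self-improvement (Coifman–Fefferman / Gehring), since that is the routine part.
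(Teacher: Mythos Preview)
The paper does not prove this lemma; it simply cites \cite[Theorem 2.1]{Kenig-Toro2} and moves on. So there is no in-paper argument to compare against, and the question is just whether your sketch is sound.

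Your Step~1 is fine. The problem is Step~2. The covering estimate you write down produces a constant comparable to $(r/r_0)^{n(p-1)}$ (note the exponent: $r_0^{n}\bigl(\avint_{B_i}h\,d\sigma\bigr)^p = r_0^{n(1-p)}\bigl(\int_{B_i}h\,d\sigma\bigr)^p$, not $r_0^{n-p}$), which blows up as $r\to\infty$. For unbounded $\Omega$ this is fatal, and even for bounded $\Omega$ it gives a constant depending on $\diam(\Omega)/r_0$ in an uncontrolled way. You notice the difficulty and in the last paragraph point to the right idea --- the characterization of $\textrm{VMO}$ as the BMO-closure of bounded uniformly continuous functions --- but then blur it with ``global BMO bound plus self-improvement,'' which by itself does \emph{not} yield $RH_p$ for arbitrary $p$: a merely finite BMO norm only gives $RH_{1+\varepsilon}$ for some small $\varepsilon$, and Gehring's lemma does not push this up to every $p$.

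The clean argument skips the small/large-scale dichotomy entirely. Fix $p$. Since $f\in\textrm{VMO}(\sigma)$, write $f=g+b$ with $g$ bounded and $\|b\|_{\textrm{BMO}(\sigma)}<c_0/(2p)$, where $c_0$ is the John--Nirenberg constant for the doubling measure $\sigma$. Then John--Nirenberg, applied on \emph{every} ball $B$ (no scale restriction needed, because $b$ has globally small BMO norm), gives $\avint_B e^{\pm 2p(b-b_B)}\,d\sigma\le C$; this is exactly your Step~1 computation carried out for $b$ rather than $f$, and it yields $\bigl(\avint_B e^{pb}\,d\sigma\bigr)^{1/p}\le C\,\avint_B e^b\,d\sigma$ uniformly over all balls. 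Since $e^{-\|g\|_\infty}\le e^g\le e^{\|g\|_\infty}$, multiplying through transfers the reverse H\"older inequality from $e^b$ to $h=e^g e^b$, with constant $C_p\le C\,e^{2\|g\|_\infty}$ (depending on $p$ through the choice of decomposition). The inequality for $h^{-1}$ follows by replacing $f$ with $-f$. In short: drop Step~2 and promote the decomposition idea from your final paragraph to the main argument.
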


\vv

The next lemma is proven in \cite[Lemma 4.11]{Jerison-Kenig}:
\begin{lemma}
Let $\Omega$ be an NTA domain, $B$ a ball centered on $\d\Omega$ with $0<r(B)<\diam \d\Omega$, and let $E\subseteq B\cap \d\Omega$ be Borel. If $x_B$ is a corkscrew point for $B$ in $\Omega$, then
\begin{equation}
\frac{\omega^{z}(E)}{\omega^{z}(B)} \approx \omega^{x_{B}}(E) \;\; \mbox{ for }z\in \Omega\backslash 2B.
\label{e:wratio}
\end{equation}
\end{lemma}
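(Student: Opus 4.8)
The statement to be proved is the comparison
\[
\frac{\omega^{z}(E)}{\omega^{z}(B)} \approx \omega^{x_{B}}(E) \qquad \text{for } z\in\Omega\setminus 2B,
\]
where $E\subseteq B\cap\partial\Omega$ is Borel, $B$ is centered on $\partial\Omega$ with $0<r(B)<\diam\partial\Omega$, and $x_B$ is a corkscrew point for $B$. The plan is to first reduce to the case where $E$ is contained in $\tfrac12 B$, and then exploit the boundary Harnack principle \eqn{bharnack} together with the comparison $\omega^z(B)\approx g(x_B,z)\,r^{1-n}$ from \eqn{wsimg}.

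First I would dispose of the doubling-type issues: since $\partial\Omega$ need not be assumed AD-regular here (this is the general NTA version), I would instead replace $B$ by a slightly smaller concentric ball or observe that $\omega^{x_B}(B)\approx 1$ by the maximum principle and the corkscrew condition, so that the normalization on the right-hand side is harmless. The main step is then the following. Apply \eqn{bharnack} to the pair of positive harmonic functions $u(z) = \omega^z(E)$ and $v(z) = \omega^z(B)$ on $\Omega$ (for fixed Borel $E\subseteq B\cap\partial\Omega$, the function $z\mapsto\omega^z(E)$ is harmonic in $\Omega$ and nonnegative); both vanish continuously on $MB\cap\partial\Omega$ away from $\overline{E}$, but to make the boundary Harnack principle directly applicable on all of $MB\cap\partial\Omega$ one restricts attention to $z\in\Omega\setminus 2B$ and uses that $u,v$ vanish on $(\partial\Omega)\setminus B\supseteq \partial\Omega\cap (MB\setminus 2B)$ when the relevant ball is placed off the support. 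Concretely, for $z\in\Omega\setminus 2B$, pick a scale comparable to $|z-\xi|$ where $\xi$ is the center of $B$; the boundary Harnack inequality on an appropriate ball, chained along a Harnack chain back to $x_B$ if necessary, yields
\[
\frac{\omega^z(E)}{\omega^z(B)} \approx \frac{\omega^{x_B}(E)}{\omega^{x_B}(B)} \approx \omega^{x_B}(E),
\]
the last step because $\omega^{x_B}(B)\approx 1$.

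The cleanest route, and the one I would actually write, avoids worrying about where $E$ sits: I would prove it in two moves. First, by \eqn{wsimg}, for $z\in\Omega\setminus 2B$ one has $\omega^z(B)\approx g(x_B,z)\,r(B)^{1-n}$, so it suffices to show $\omega^z(E)\approx \omega^{x_B}(E)\,g(x_B,z)\,r(B)^{1-n}$. Both sides, as functions of $z$, are nonnegative and harmonic on $\Omega\setminus\overline{2B}$... more to the point, one applies \eqn{bharnack} with $u(\cdot)=\omega^{\cdot}(E)$ and $v(\cdot)=g(x_B,\cdot)$ on the domain $\Omega$ outside a ball around the center of $B$ of radius $\sim r(B)$: these two functions are positive and harmonic in $\Omega\cap(MB)^c$-type regions and vanish on the boundary there, so their ratio at $z$ is comparable to their ratio at a corkscrew point $y_B$ for a ball of radius $\sim r(B)$, i.e.\ at a point comparable to $x_B$. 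Since $\omega^{y_B}(E)\approx \omega^{x_B}(E)$ (Harnack) and $g(x_B,y_B)\approx r(B)^{1-n}$, we get $\omega^z(E)/g(x_B,z)\approx \omega^{x_B}(E)\,r(B)^{1-n}$, which combined with \eqn{wsimg} gives the claim.

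The main obstacle I anticipate is the bookkeeping with the NTA constants when applying the boundary Harnack principle \eqn{bharnack}: that lemma as stated requires a ball $B'$ with $0<Mr(B')<\diam\partial\Omega$ and both functions vanishing on $MB'\cap\partial\Omega$, so one must choose $B'$ centered at the center of $B$ with $r(B')$ a fixed small multiple of $r(B)$ so that $MB'$ stays within the region where $\omega^{\cdot}(E)$ and $g(x_B,\cdot)$ (equivalently $\omega^{\cdot}(B)$) both vanish on $\partial\Omega$, and then connect the resulting corkscrew point to $x_B$ by a Harnack chain of bounded length (using $r(B)<\diam\partial\Omega$). Everything else is a routine combination of \eqn{wsimg}, \eqn{bharnack}, Harnack's inequality, and the maximum principle; since this is a known result from \cite[Lemma 4.11]{Jerison-Kenig}, I would keep the write-up short and refer there for the details of the constant-chasing.
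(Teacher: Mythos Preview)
The paper does not prove this lemma at all: it simply states it and cites \cite[Lemma 4.11]{Jerison-Kenig}. Your proposal sketches essentially the classical Jerison--Kenig argument (compare $\omega^{\cdot}(E)$ with $g(x_B,\cdot)$ via the boundary Harnack principle \eqn{bharnack}, then use \eqn{wsimg} and $\omega^{x_B}(B)\approx 1$), which is correct in outline, and you yourself note at the end that you would defer to \cite{Jerison-Kenig} for the constant-chasing; so there is no substantive discrepancy to report.

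One small remark on your write-up: the ``first route'' you describe---applying \eqn{bharnack} directly to $u=\omega^{\cdot}(E)$ and $v=\omega^{\cdot}(B)$ on a ball containing $z$---does not work as stated, since neither function vanishes on $\partial\Omega\cap B$. Your ``cleanest route'' via $g(x_B,\cdot)$ is the right one, but note that the boundary Harnack comparison must be made on balls centered on $\partial\Omega\setminus\overline{B}$ (where both $\omega^{\cdot}(E)$ and $g(x_B,\cdot)$ vanish on the boundary and are harmonic in the interior), and then one chains from $z$ back toward a corkscrew point of $2B$; this is exactly the obstacle you flag in your last paragraph, and it is handled in \cite{Jerison-Kenig}.
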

Note that this implies that if $\omega$ is harmonic measure with pole at infinity, we also have 
\begin{equation}
\frac{\omega(E)}{\omega(B)} \approx \omega^{x_{B}}(E).
\label{e:wratio2}
\end{equation}
\vv

The next corollary is an easy consequence of the preceding lemma, as shown in  \cite[Corollary 2.4]{Kenig-Toro2}.

\begin{coro}\label{corovmo0}
Let $\Omega\subset \R^{n+1}$ be a chord-arc domain. If the harmonic measure $\omega$ in $\Omega$ 
is such that $\frac{d\omega}{d\sigma}\in \textrm{VMO}(\sigma)$, then, for all $\ve>0$, $x\in
\partial\Omega$, $0<r\leq\diam(\Omega)$ and $E\subset B(x,r)\cap\partial\Omega$,
$$C(\ve)^{-1}\left(\frac{\sigma(E)}{\sigma(B(x,r))}\right)^{1+\ve}\leq
\frac{\omega(E)}{\omega(B(x,r))} \leq C(\ve)\left(\frac{\sigma(E)}{\sigma(B(x,r))}\right)^{1-\ve}.
$$
\end{coro}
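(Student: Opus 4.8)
The plan is to read both inequalities as quantitative $A_\infty$ estimates and to derive them from Hölder's inequality combined with the reverse Hölder bounds of Lemma \ref{lemvmo}: the right-hand inequality uses the density $h=\frac{d\omega}{d\sigma}$, while the left-hand one uses the reciprocal density $h^{-1}=\frac{d\sigma}{d\omega}$. Throughout, fix $B=B(x,r)$ with $0<r\le\diam(\Omega)$ and $E\subset B\cap\partial\Omega$, and set $t=\sigma(E)/\sigma(B)$.

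For the upper bound, since $d\omega=h\,d\sigma$ with $h=e^f$ and $f\in\textrm{VMO}(\sigma)$, Hölder's inequality gives, for any $1<p<\infty$ with conjugate exponent $p'$,
\[
\omega(E)=\int_E h\,d\sigma\le \sigma(E)^{1/p'}\Bigl(\int_B h^p\,d\sigma\Bigr)^{1/p}
= \sigma(E)^{1/p'}\,\sigma(B)^{1/p}\Bigl(\,\avint_B h^p\,d\sigma\Bigr)^{1/p}.
\]
By Lemma \ref{lemvmo}, $\bigl(\avint_B h^p\,d\sigma\bigr)^{1/p}\le C_p\,\avint_B h\,d\sigma=C_p\,\omega(B)/\sigma(B)$, and substituting yields
\[
\frac{\omega(E)}{\omega(B)}\le C_p\,\frac{\sigma(E)^{1/p'}\,\sigma(B)^{1/p}}{\sigma(B)}=C_p\,t^{1/p'}.
\]
Choosing $p=1/\ve$ so that $1/p'=1-\ve$ gives the right-hand inequality with $C(\ve)=C_{1/\ve}$; note this half uses only the first reverse Hölder inequality of Lemma \ref{lemvmo}.

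For the lower bound I would run the symmetric argument with the roles of $\sigma$ and $\omega$ interchanged: using $d\sigma=h^{-1}\,d\omega$ and Hölder's inequality with respect to $\omega$,
\[
\sigma(E)=\int_E h^{-1}\,d\omega\le \omega(E)^{1/p'}\Bigl(\int_B h^{-p}\,d\omega\Bigr)^{1/p}
= \omega(E)^{1/p'}\Bigl(\int_B h^{\,1-p}\,d\sigma\Bigr)^{1/p}.
\]
For $p>1$ one writes $\int_B h^{\,1-p}\,d\sigma=\sigma(B)\,\avint_B h^{-(p-1)}\,d\sigma$ and applies the second reverse Hölder inequality of Lemma \ref{lemvmo} (with Jensen when $p-1\le 1$) to get $\avint_B h^{-(p-1)}\,d\sigma\lesssim_p\bigl(\avint_B h^{-1}\,d\sigma\bigr)^{p-1}$. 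Granting the $A_2$-type comparison $\avint_B h^{-1}\,d\sigma\lesssim\bigl(\avint_B h\,d\sigma\bigr)^{-1}=\sigma(B)/\omega(B)$, the chain of inequalities gives $\sigma(E)\lesssim_p\omega(E)^{1/p'}\,\sigma(B)\,\omega(B)^{-1/p'}$, that is $t\lesssim_p\bigl(\omega(E)/\omega(B)\bigr)^{1/p'}$; taking $p'=1+\ve$ yields the left-hand inequality. The step I expect to require the most care is exactly this $A_2$/reverse-Hölder comparison for $h^{-1}$ against $\omega(B)/\sigma(B)$: one checks that the two reverse Hölder inequalities of Lemma \ref{lemvmo} do not by themselves force the $A_2$ condition, so here the full strength of $\log h\in\textrm{VMO}(\sigma)$ must be used — via John--Nirenberg it forces $h\in A_q(\sigma)$ for every $q>1$ with controlled constants, which is also how Lemma \ref{lemvmo} itself is proved — or, alternatively, one passes through the harmonic measure $\omega^{x_B}$ with pole at a corkscrew point of $B$ via \eqref{e:wratio2} and \eqref{e:wratio}, on which these averages are normalized; this is carried out in \cite[Corollary 2.4]{Kenig-Toro2}.
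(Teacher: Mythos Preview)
Your approach is correct and coincides with the paper's: the paper gives no argument of its own, stating only that the result ``is an easy consequence of [Lemma~\ref{lemvmo}], as shown in \cite[Corollary~2.4]{Kenig-Toro2}'' --- precisely the reference you end up citing. Your upper bound via H\"older plus reverse H\"older is the standard proof, and your reduction of the lower bound to the $A_2$ comparison $\bigl(\avint_B h\bigr)\bigl(\avint_B h^{-1}\bigr)\le C$ is the right move.

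One small correction to your self-assessment: contrary to what you write, the two reverse H\"older inequalities of Lemma~\ref{lemvmo} \emph{do} by themselves force the $A_2$ condition. Each gives, respectively, $h\in A_\infty(\sigma)$ and $h^{-1}\in A_\infty(\sigma)$ (since $RH_q\subset A_\infty$ for any $q>1$), and $w,w^{-1}\in A_\infty$ already implies $w\in A_2$. Indeed, if $\avint_{B_k}w\cdot\avint_{B_k}w^{-1}\to\infty$ along some balls $B_k$, then the sets $E_k=\{x\in B_k: w(x)>\tfrac12\avint_{B_k}w\}$ satisfy $w(E_k)\ge\tfrac12\,w(B_k)$, hence $\sigma(E_k)\gtrsim\sigma(B_k)$ by the $A_\infty$ property of $w$; on the other hand $w^{-1}<2\bigl(\avint_{B_k}w\bigr)^{-1}$ on $E_k$, so
\[
\frac{\int_{E_k}w^{-1}\,d\sigma}{\int_{B_k}w^{-1}\,d\sigma}\le \frac{2\,\sigma(B_k)}{\bigl(\avint_{B_k}w\bigr)\,\sigma(B_k)\,\avint_{B_k}w^{-1}}=\frac{2}{\avint_{B_k}w\cdot\avint_{B_k}w^{-1}}\to 0,
\]
which contradicts the $A_\infty$ property of $w^{-1}$. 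So Lemma~\ref{lemvmo} alone really does suffice, just as the paper asserts, and your John--Nirenberg detour, while also valid, is not needed.
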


Let us remark that the pole of harmonic measure above can be either a point $p\in\Omega$ (in which case the constants also depend on $p$) or infinity in case
$\Omega$ is unbounded.

Another easy consequence of Lemma \ref{lemvmo} is the following.

\begin{coro}\label{corovmo}
Let $\Omega\subset \R^{n+1}$ be a chord-arc domain. Suppose that the  harmonic measure $\omega$ in $\Omega$ with pole at infinity
is such that $\log\left(\frac{d\omega}{d\sigma}\right)\in \textrm{VMO}(\sigma)$. For $z\in\Omega$, let $K_z = \frac{d\omega^z}{d\sigma}$ (i.e., $K_z$ is the Poisson kernel). For $1<p<\infty$, if $x\in\partial\Omega$, $0<r\leq\diam(\Omega)$, and $z\in\Omega\setminus B(x,2r)$, then
$$\left(\,\avint_{B(x,r)} (K_z)^p\,d\sigma\right)^{1/p} \leq C(p)\, \avint_{B(x,r)} K_z\,d\sigma.$$
\end{coro}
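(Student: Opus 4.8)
The plan is to compare, on the ball $B:=B(x,r)$, the Poisson kernel $K_z=\frac{d\omega^z}{d\sigma}$ with pole at $z$ against the Poisson kernel $h=\frac{d\omega}{d\sigma}$ with pole at infinity, and then to invoke Lemma \ref{lemvmo}.

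First I would fix $x\in\partial\Omega$, $0<r\le\diam(\Omega)$ and $z\in\Omega\setminus B(x,2r)$, and pick a corkscrew point $x_B$ for $B=B(x,r)$ in $\Omega$. Since $z\in\Omega\setminus 2B$, applying the change-of-pole estimates \eqref{e:wratio} and \eqref{e:wratio2} to an arbitrary Borel set $E\subseteq B\cap\partial\Omega$ and dividing the two comparisons gives
$$\frac{\omega^z(E)}{\omega(E)}\;\approx\;\frac{\omega^z(B)}{\omega(B)}\;=:\;c_z ,$$
with implicit constants depending only on the NTA character of $\Omega$, hence uniform in $E$, $x$, $r$ and $z$. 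A routine measure-theoretic argument (testing this two-sided comparison on the super- and sub-level sets of $\frac{d\omega^z}{d\omega}$) then yields $\frac{d\omega^z}{d\omega}(y)\approx c_z$ for $\omega$-a.e.\ $y\in B\cap\partial\Omega$; since $h=e^{\log h}>0$ $\sigma$-a.e.\ (as $\log h\in\textrm{VMO}(\sigma)$ is finite a.e.), $\omega|_B$ and $\sigma|_B$ are mutually absolutely continuous, and the chain rule gives
$$K_z(y)\;=\;\frac{d\omega^z}{d\omega}(y)\,\frac{d\omega}{d\sigma}(y)\;\approx\;c_z\,h(y)\qquad\text{for }\sigma\text{-a.e.\ }y\in B\cap\partial\Omega .$$

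With this comparison in hand the corollary is immediate. Applying Lemma \ref{lemvmo} to $f=\log h\in\textrm{VMO}(\sigma)$ and $h=e^f$ gives $\big(\avint_B h^p\,d\sigma\big)^{1/p}\le C_p\avint_B h\,d\sigma$, and therefore
$$\left(\,\avint_B K_z^p\,d\sigma\right)^{1/p}\;\approx\;c_z\left(\,\avint_B h^p\,d\sigma\right)^{1/p}\;\le\;C_p\,c_z\avint_B h\,d\sigma\;\approx\;C_p\avint_B K_z\,d\sigma ,$$
as wished. I do not expect a genuine obstacle: the only points needing care are the uniformity of the comparability constants (guaranteed by the Jerison--Kenig lemmas, whose constants depend only on the NTA character) and the degenerate range $r\approx\diam(\Omega)$, which does not occur here since $\omega$ has a pole at infinity and so $\Omega$ is unbounded with $\diam(\Omega)=\diam(\partial\Omega)=\infty$. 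All the content lies in the change-of-pole identity $K_z\approx c_z h$ on $B$ combined with Lemma \ref{lemvmo}.
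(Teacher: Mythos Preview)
Your proposal is correct and follows essentially the same route as the paper: both arguments use the change-of-pole estimates \eqref{e:wratio} and \eqref{e:wratio2} to deduce that $K_z\approx c_z\,h$ pointwise $\sigma$-a.e.\ on $B(x,r)$ with $c_z=\omega^z(B)/\omega(B)$, and then apply Lemma \ref{lemvmo}. The only cosmetic difference is that the paper obtains the pointwise comparability via the Lebesgue differentiation theorem (applied to balls $B(y,s)\subset B(x,r)$ with $s\to0$), whereas you obtain it by testing the set-level inequality on the level sets of $d\omega^z/d\omega$; both are standard and equivalent here.
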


For this corollary to hold we assume either that the pole of $\omega$ is at $\infty$ if $\Omega$ is unbounded, or that it is in $\Omega$.

\begin{proof}
Since $z\in\Omega\setminus B(x,2r)$, if $z_{0}$ is a corkscrew point for $B(x,r)$, then whenever $B(y,s)\subset  B(x,r)$ and all $0<s<r/10$, by \eqn{wratio} and \eqn{wratio2},
$$  \frac{\omega(B(y,s))}{\omega(B(x,r))} \approx \omega^{z_{0}}(B(y,s))\approx \frac{\omega^{z}(B(y,s))}{\omega^{z}(B(x,r))}.$$

Hence, by the Lebesgue differentiation theorem, if we denote $h=\frac{d\omega}{d\sigma}$, 
for $\sigma$-a.e. $y\in B(x,r)\cap\partial\Omega$,
\begin{align*}
K_z(y) & = \frac{d\omega^z}{d\sigma}(y)  = \lim_{s\to 0}\frac{\omega^z(B(y,s))}{\sigma(B(y,s))}\\
& \approx \frac{\omega^{z}(B(x,r))}{\omega(B(x,r))}\,\lim_{s\to 0}\frac{\omega(B(y,s))}{\sigma(B(y,s))}
= \frac{\omega^{z}(B(x,r))}{\omega(B(x,r))} h(y).
\end{align*}
Therefore, by Lemma \ref{lemvmo}, since $\log h\in \textrm{VMO}(\sigma)$,
\begin{align*}
\left(\,\avint_{B(x,r)} K_z(y)^p\,d\sigma(y)\right)^{1/p} &\approx  \frac{\omega^{z}(B(x,r))}{\omega(B(x,r))}  \left(
\,\,\avint_{B(x,r)} h(y)^p \,d\sigma(y)\right)^{1/p}\\
& \lesssim  \frac{\omega^{z}(B(x,r))}{\omega(B(x,r))} 
\,\,\avint_{B(x,r)} h(y) \,d\sigma(y)\\
& \approx \,\avint_{B(x,r)} K_z(y)\,d\sigma(y).
\end{align*}
\end{proof}

\vv

\begin{lemma}\label{lemnontang}
Let $\Omega\subset \R^{n+1}$ be a chord-arc domain. Suppose that the  harmonic measure $\omega$ in $\Omega$
 with pole either at infinity or at some fixed point $p\in\Omega$ 
is such that $\log\left(\frac{d\omega}{d\sigma}\right)\in \textrm{VMO}(\sigma)$.
Denote by $u$ the associated Green function.
Then, for $\sigma$-a.e.\ $x\in\partial\Omega$, $\nabla u(z)$ converges to $-\vec{n}(x)\,\frac{d\omega}{d\sigma}(x)$  as $\Omega\ni z\to
x$ non-tangentially, where $\vec n$ is the outer unit normal of $\Omega$.
\end{lemma}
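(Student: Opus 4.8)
The plan is to deduce this from Proposition \ref{proponontang}, whose hypotheses we must verify. Proposition \ref{proponontang} gives exactly the nontangential convergence $\nabla u(z)\to -h(x)\,\vec n(x)$ with $h=d\omega/d\sigma$, provided that (i) $\Omega$ is chord-arc (given), (ii) the growth condition \rf{eqgrow'} holds, i.e. $\omega(B(x,r))\lesssim_x r^{n+\delta}$ for all $r\geq 1$ and each $x\in\partial\Omega$, and (iii) $h\in L^p_{loc}(\sigma)$ for some $p>1$. So the lemma reduces to checking (ii) and (iii) under the hypothesis $\log h\in \textrm{VMO}(\sigma)$; the case of a finite pole $p\in\Omega$ is handled the same way (in that case the growth condition is trivially satisfied since $\omega^p$ is a finite measure and $\diam\Omega$ may be finite, but if $\Omega$ is unbounded one still needs a large-scale bound, again available from the corollaries below).

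First I would establish (iii). Since $\log h\in\textrm{VMO}(\sigma)$, Lemma \ref{lemvmo} applied with $f=\log h$ gives a reverse H\"older inequality: for every ball $B(x,r)$ with $0<r\leq\diam\Omega$,
\[
\left(\,\avint_{B(x,r)} h^p\,d\sigma\right)^{1/p} \leq C_p \avint_{B(x,r)} h\,d\sigma < \infty,
\]
which in particular shows $h\in L^p_{loc}(\sigma)$ for every $p\in(1,\infty)$, so (iii) holds.

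Next, (ii). By Corollary \ref{corovmo0}, for every $\ve>0$, every $x\in\partial\Omega$, every $0<r\leq\diam\Omega$, and every Borel $E\subseteq B(x,r)\cap\partial\Omega$,
\[
\frac{\omega(E)}{\omega(B(x,r))} \leq C(\ve)\left(\frac{\sigma(E)}{\sigma(B(x,r))}\right)^{1-\ve}.
\]
When $\Omega$ is bounded, $\diam\Omega<\infty$ and \rf{eqgrow'} is vacuous or trivial, so assume $\Omega$ is unbounded, hence $\diam\partial\Omega=\infty$ and the corollary applies at all scales. Fix $x\in\partial\Omega$. For $r\geq 1$, apply the inequality with a fixed reference ball: take $R_0=1$ (or any fixed radius), and for $r> R_0$ compare $B(x,R_0)$ with $B(x,r)$ using $E=B(x,R_0)\cap\partial\Omega$, giving, via $n$-AD-regularity $\sigma(B(x,\rho))\approx \rho^n$,
\[
\frac{\omega(B(x,R_0))}{\omega(B(x,r))} \geq C(\ve)^{-1}\left(\frac{R_0}{r}\right)^{n(1-\ve)} \cdot c,
\]
so that $\omega(B(x,r)) \leq C(\ve)\,\omega(B(x,R_0))\,c^{-1}\,(r/R_0)^{n(1-\ve)}$. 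Choosing $\ve$ small enough that $n(1-\ve)<n+1$ — e.g. any $\ve\in(0,1)$ works since $n(1-\ve)<n<n+1$ — this gives $\omega(B(x,r))\leq C_x\,r^{n(1-\ve)}\leq C_x\,r^{n+\delta}$ for $r\geq 1$ with, say, $\delta=1/2$; in fact one even gets the sharper exponent $n$. Thus \rf{eqgrow'} holds with constants depending on $x$ (through $\omega(B(x,R_0))$), which is all that is required. With (ii) and (iii) verified, Proposition \ref{proponontang} applies and yields the conclusion $\lim_{\Gamma^+(x)\ni z\to x}\nabla u(z) = -h(x)\,\vec n(x)$ for $\sigma$-a.e.\ $x\in\partial\Omega$, which is the assertion of the lemma.

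The main obstacle, such as it is, is bookkeeping around the large-scale growth estimate: one must be slightly careful that the $\ve$ in Corollary \ref{corovmo0} can be chosen independently of $r$ (it can, the corollary being uniform in $r$) and that the reference-ball constant $\omega(B(x,R_0))$, while depending on $x$, is finite and nonzero — both clear since $\omega$ is a nontrivial Radon measure with $\supp\omega=\partial\Omega$. No genuinely new idea is needed beyond assembling Lemma \ref{lemvmo}, Corollary \ref{corovmo0}, and Proposition \ref{proponontang}.
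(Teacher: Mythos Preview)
Your overall plan is exactly the paper's: reduce to Proposition \ref{proponontang} and verify the local $L^p$ integrability of $h$ via Lemma \ref{lemvmo} and the large-scale growth \rf{eqgrow'} via Corollary \ref{corovmo0}. However, your application of Corollary \ref{corovmo0} has the inequality running the wrong way. You quote the upper bound
\[
\frac{\omega(E)}{\omega(B(x,r))} \le C(\ve)\left(\frac{\sigma(E)}{\sigma(B(x,r))}\right)^{1-\ve},
\]
but with $E=B(x,R_0)$ this only yields a \emph{lower} bound on $\omega(B(x,r))$, namely $\omega(B(x,r))\gtrsim \omega(B(x,R_0))(r/R_0)^{n(1-\ve)}$; the displayed $\ge$ you wrote does not follow. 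To get an upper bound on $\omega(B(x,r))$ you must use the \emph{lower} estimate in Corollary \ref{corovmo0},
\[
C(\ve)^{-1}\left(\frac{\sigma(B(x,R_0))}{\sigma(B(x,r))}\right)^{1+\ve}\le \frac{\omega(B(x,R_0))}{\omega(B(x,r))},
\]
which, by AD-regularity, gives $\omega(B(x,r))\le C(\ve)\,\omega(B(x,R_0))\,(r/R_0)^{n(1+\ve)}$. Consequently the correct exponent is $n(1+\ve)$, not $n(1-\ve)$, and the choice of $\ve$ is no longer free: one needs $n(1+\ve)<n+1$, i.e.\ $\ve<1/n$, to obtain \rf{eqgrow'} with some $\delta\in(0,1)$ (the paper takes $\ve=1/(2n)$, giving $\delta=1/2$). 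Once this slip is corrected, your argument matches the paper's proof.
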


This lemma is proved in \cite{Kenig-Toro2} under the additional assumption that
$\Omega$ is Reifenberg flat. In \cite{Kenig-Toro} it is shown how to prove this without the Reifenberg flatness assumption.
The delicate arguments involved  in \cite{Kenig-Toro2} and  \cite{Kenig-Toro} do not use the connection
between harmonic measure and the Riesz transform and instead are of a more geometric nature.

\begin{proof}
This is an immediate consequence of Proposition \ref{proponontang} and Corollary
\ref{corovmo0}. Indeed, this corollary, implies that for all $x\in\partial\Omega$ and all
$0<r_0\leq r\leq\diam(\Omega)$,
$$\left(\frac{\sigma(B(x,r_0))}{\sigma(B(x,r))}\right)^{1+\ve}\leq C(\ve)\,
\frac{\omega(B(x,r_0))}{\omega(B(x,r))},
$$
Hence, using also the AD-regularity of $\sigma$ we get
$$\omega(B(x,r)) \leq  C(\ve)\,\omega(B(x,r_0))\,\left(\frac{\sigma(B(x,r))}{\sigma(B(x,r_0))}\right)^{1+\ve}
 \approx \frac{\omega(B(x,r_0))}{\sigma(B(x,r_0))^{1+\ve}}
\,r^{n(1+\ve)}.$$
Therefore, choosing $\ve=1/(2n)$,
$$\omega(B(x,r)) \leq C(x)\,r^{n+1/2}\quad \mbox{ for $r\geq r_0$.}$$
So the assumption \rf{eqgrow'} in Proposition \ref{proponontang} holds and thus
$$\lim_{\Gamma^+(x)\ni z\to x} \nabla u(z) = -\dfrac{d\omega}{d\sigma}(x)\,\vec{n}(x)\quad \mbox{ for $\sigma$-a.e.\
$x\in\partial\Omega$.}
$$
\end{proof}
\vv

{ The next result is an auxiliary calculation which will be used several times in the next section.
The arguments for the proof are quite standard. Similar calculations appear, for example, in the proofs of Lemma 5.2 of \cite{Kenig-Toro}, Lemma 3.3 of \cite{Kenig-Toro2} or Lemma 3.30 of \cite{Hofmann-Martell}.}

\begin{lemma}\label{lemsumw}
Let $\Omega\subset \R^{n+1}$ be a chord-arc domain, and let $\omega$ be the harmonic measure in $\Omega$
 with pole either at infinity or at some fixed point $p\in\Omega$. Let $B\subset\R^{n+1}$ be a ball centered at $\partial\Omega$
 such that $p\not\in10B$. Then for any constant $\ve>0$,
$$\int_{B\cap\Omega} \!\left(\frac{d_\Omega(y)}{r(B)}\right)^{\ve}
\frac{\omega(B(y,2d_\Omega(y)))}{d_\Omega(y)^{n+1}}\,dy \leq C(\ve)\,\omega(B).$$
\end{lemma}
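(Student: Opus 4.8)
The plan is to decompose $B\cap\Omega$ according to the dyadic scale of $d_\Omega(y)$, apply a Whitney-type covering at each scale, and then sum a geometric series in $\ve$. First I would cover $B\cap\Omega$ by a Whitney decomposition $\{Q_i\}$ into cubes with $\ell(Q_i)\approx \dist(Q_i,\partial\Omega)\approx d_\Omega(y)$ for $y\in Q_i$; on each $Q_i$ the integrand is comparable to $(\ell(Q_i)/r(B))^\ve\,\omega(B(z_i,C\ell(Q_i)))\,\ell(Q_i)^{-n-1}$ for a center $z_i$, so
\begin{equation}\label{e:whitney-split}
\int_{B\cap\Omega}\!\left(\frac{d_\Omega(y)}{r(B)}\right)^{\ve}\frac{\omega(B(y,2d_\Omega(y)))}{d_\Omega(y)^{n+1}}\,dy
\lesssim \sum_i \left(\frac{\ell(Q_i)}{r(B)}\right)^\ve \frac{\omega(B(z_i,C\ell(Q_i)))}{\ell(Q_i)^n}.
\end{equation}
Next I would organize the cubes by generation: for $k\geq 0$ let $\mathcal{W}_k$ be the Whitney cubes $Q_i\subset B\cap\Omega$ with $\ell(Q_i)\approx 2^{-k}r(B)$. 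For a fixed $k$, the balls $B(z_i,C\ell(Q_i))$ have bounded overlap and are centered within distance $C2^{-k}r(B)$ of $\partial\Omega$, and their projections (corkscrew-shadows) onto $\partial\Omega$ are balls of radius $\approx 2^{-k}r(B)$ that tile a neighborhood of $\partial\Omega\cap 2B$ with bounded overlap. Hence $\sum_{Q_i\in\mathcal{W}_k}\omega(B(z_i,C\ell(Q_i)))\lesssim \omega(CB)\lesssim \omega(B)$ (using the doubling property of $\omega$ on chord-arc domains, valid for the pole at infinity or at $p\notin 10B$).

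Combining this with \eqref{e:whitney-split} and the fact that for $Q_i\in\mathcal W_k$ one has $\ell(Q_i)^{-n}\approx (2^{-k}r(B))^{-n}$ while $\#\mathcal W_k\cdot$(typical $\omega$-mass) is controlled as above — more precisely, grouping \eqref{e:whitney-split} by $k$ gives
\begin{equation}\label{e:geom}
\sum_{k\geq 0}\left(2^{-k}\right)^\ve \frac{1}{(2^{-k}r(B))^n}\sum_{Q_i\in\mathcal W_k}\omega(B(z_i,C\ell(Q_i)))
\lesssim \sum_{k\geq 0} 2^{-k\ve}\,\frac{2^{kn}}{r(B)^n}\,\omega(B).
\end{equation}
This last bound is wrong as written; the $2^{kn}$ must be absorbed. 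The fix is to not use the trivial per-scale mass bound $\omega(B)$ but rather note that at scale $k$ the relevant region is the Whitney strip $\{y\in B: d_\Omega(y)\approx 2^{-k}r(B)\}$, whose shadow on $\partial\Omega$ has $\sigma$-measure $\approx r(B)^n$, covered by $\approx 2^{kn}$ boundary balls of radius $2^{-k}r(B)$; by Corollary~\ref{corovmo0} (or just doubling), each such boundary ball $B'$ has $\omega(B')\lesssim \omega(B)(\sigma(B')/\sigma(B))^{1-\ve'} \approx \omega(B)\,2^{-kn(1-\ve')}$. Summing over the $\approx 2^{kn}$ balls gives $\omega(B)\,2^{kn\ve'}$ at scale $k$, so \eqref{e:geom} becomes $\sum_k 2^{-k\ve}2^{-kn}2^{kn}2^{kn\ve'}\omega(B) = \sum_k 2^{-k(\ve-n\ve')}\omega(B)$, which converges once $\ve'<\ve/n$.

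The main obstacle is exactly this bookkeeping: one must be careful that summing $\omega$ over all Whitney cubes of generation $k$ genuinely yields only $\omega(B)$ (bounded overlap of shadows, doubling of $\omega$) and that the factor $\ell(Q_i)^{-n}$ times the number of cubes does not overwhelm the gain $2^{-k\ve}$ from the weight $(d_\Omega/r(B))^\ve$. The clean way, which I would actually write, bypasses the miscalculation above: use that $\omega(B(z_i,C\ell(Q_i)))/\ell(Q_i)^n \approx u(x_{Q_i})\,\ell(Q_i)^{-1}$ is not needed; instead directly estimate $\sum_{Q_i\in\mathcal W_k}\ell(Q_i)^{-n}\omega(B(z_i,C\ell(Q_i))) \lesssim 2^{-k}r(B)^{-1}\sum_{Q_i\in\mathcal W_k}\omega(B(z_i,C\ell(Q_i)))\cdot(2^{-k}r(B))^{-(n-1)}$... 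This is getting circular. The correct pivot: apply Corollary~\ref{corovmo0} with a small $\ve'$ to get $\omega(B(z_i,C\ell(Q_i)))\lesssim_{\ve'}\omega(B)\,(\ell(Q_i)/r(B))^{n(1-\ve')}$, substitute into \eqref{e:whitney-split}, and use $\#\mathcal W_k\lesssim 2^{kn}$ to obtain
\[
\sum_i\left(\frac{\ell(Q_i)}{r(B)}\right)^{\ve}\frac{\omega(B(z_i,C\ell(Q_i)))}{\ell(Q_i)^n}
\lesssim_{\ve'} \omega(B)\sum_{k\geq 0} 2^{kn}\,2^{-k\ve}\,\frac{(2^{-k}r(B))^{n(1-\ve')}}{r(B)^{n(1-\ve')}(2^{-k}r(B))^n}
= \omega(B)\sum_{k\geq 0} 2^{-k(\ve+n\ve')},
\]
which sums to $C(\ve,\ve')\,\omega(B)$; choosing any $\ve'\in(0,1)$ finishes the proof. (In the pole-at-infinity case one uses \eqref{e:wratio2} and that Corollary~\ref{corovmo0} applies verbatim; in the finite-pole case the hypothesis $p\notin 10B$ guarantees the comparisons and doubling hold on $B$.)
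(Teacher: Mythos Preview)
Your overall plan---decompose by dyadic scale of $d_\Omega$, use a covering with bounded overlap at each scale to control the $\omega$-mass, then sum a geometric series in $\ve$---is exactly the paper's approach. The paper uses the dyadic shells $A_j=\{y\in B\cap\Omega:2^{-j-1}r(B)<d_\Omega(y)\le 2^{-j}r(B)\}$ together with a Vitali family of boundary balls, which is a cosmetic variant of your Whitney decomposition.

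However, there is a genuine bookkeeping error in your display \eqref{e:whitney-split}, and it derails everything that follows. A Whitney cube $Q_i\subset\Omega\subset\R^{n+1}$ has Lebesgue measure $|Q_i|\approx\ell(Q_i)^{\,n+1}$, not $\ell(Q_i)$. Since the integrand on $Q_i$ is comparable to $(\ell(Q_i)/r(B))^\ve\,\omega(B(z_i,C\ell(Q_i)))\,\ell(Q_i)^{-(n+1)}$, the integral over $Q_i$ is
\[
\approx \left(\frac{\ell(Q_i)}{r(B)}\right)^\ve \omega(B(z_i,C\ell(Q_i))),
\]
with \emph{no} leftover $\ell(Q_i)^{-n}$. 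With this correction, your own second paragraph finishes the proof immediately: for each $k$ the balls $\{B(z_i,C\ell(Q_i)):Q_i\in\mathcal W_k\}$ have bounded overlap and sit inside $C'B$, so by doubling $\sum_{Q_i\in\mathcal W_k}\omega(B(z_i,C\ell(Q_i)))\lesssim \omega(C'B)\lesssim\omega(B)$, and the full sum is $\lesssim\sum_{k\ge0}2^{-k\ve}\omega(B)=C(\ve)\,\omega(B)$. This is precisely what the paper does.

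Your attempted repair via Corollary~\ref{corovmo0} has two separate problems. First, that corollary requires $\log(d\omega/d\sigma)\in\textrm{VMO}(\sigma)$, which is \emph{not} among the hypotheses of Lemma~\ref{lemsumw}; the lemma is stated and used for an arbitrary chord-arc domain, relying only on the doubling of $\omega$. Second, even granting the corollary, your final simplification is arithmetically wrong: the displayed sum actually equals $\omega(B)\,r(B)^{-n}\sum_{k\ge0}2^{k(n-\ve+n\ve')}$, which diverges for every $\ve<n$, not $\omega(B)\sum_{k\ge0}2^{-k(\ve+n\ve')}$ as you claim. The spurious $\ell(Q_i)^{-n}$ from \eqref{e:whitney-split} cannot be absorbed this way; it simply should not be there.
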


\begin{proof}
We write
\begin{align}\label{eqplug*23}
\int_{B\cap\Omega} \!\left(\frac{d_\Omega(y)}{r(B)}\right)^{\ve}&
\frac{\omega(B(y,2d_\Omega(y)))}{d_\Omega(y)^{n+1}}\,dy \\
&
\lesssim \sum_{j\geq0} 2^{-j\ve}
\int_{\begin{subarray}{l}
y\in B\cap\Omega:\\
2^{-j-1}r(B)<d_\Omega(y)\leq 2^{-j}r(B)\end{subarray}}
\frac{\omega(B(y,2^{-j+1}r(B)))}{(2^{-j}r(B))^{n+1}}\,dy.\nonumber
\end{align}
We denote $A_j:=\bigl\{y\in B\cap\Omega:2^{-j-1}r(B)<d_\Omega(y)\leq 2^{-j}r(B)\bigr\}$. For each $y\in A_j$ consider a ball
$B_y^j$ with radius $2^{-j+1}r(B)$ centered at a point $\xi_y\in\partial\Omega$ such that $|y-\xi_y|=d_\Omega(y)$. Clearly $y\in
B_y^j$ for each $y\in A_j$, and thus we can extract a subfamily of pairwise disjoint balls $\{B_k^j\}_{k}\subset \{B_y^j\}_{y\in A_j}$
so that 
$$A_j \subset \bigcup_k 3B_k^j.$$
Notice that for each $y\in B_k^j$, since $\omega$ is doubling,
$$\omega(B(y,2^{-j+1}r(B))) \leq \omega(6B_k^j) \lesssim \omega(B_k^j).$$
Therefore, taking also into account that the balls $B_k^j$ are contained in $6B$,
\begin{align*}
\int_{\begin{subarray}{l}
y\in B\cap\Omega:\\
2^{-j-1}r(B)<d_\Omega(y)\leq 2^{-j}r(B)\end{subarray}}
\frac{\omega(B(y,2^{-j+1}r(B)))}{(2^{-j}r(B))^{n+1}}\,dy\lesssim & \sum_k 
\int_{B_k^j}
\frac{\omega(B_k^j)}{(2^{-j}r(B))^{n+1}}\,dy \\
& = C  \sum_k \omega(B_k^j) \lesssim \omega(6B) \lesssim \omega(B).
\end{align*}
Plugging this estimate into \rf{eqplug*23}, the lemma follows.
\end{proof}

\vvv


\section{Estimates for the gradient of Green's function}\label{section5}

{ The reader should compare the arguments in this 
section to the ones in Section 3 of \cite{Kenig-Toro2} and Section 2 of \cite{Kenig-Toro}, which in turn rely on the results in the Appendices A1 and A2 of 
\cite{Kenig-Toro2}. 
} 
\vv

\begin{lemma}\label{lemgradgreen1}
Let $\Omega\subset \R^{n+1}$ be an unbounded chord-arc domain. Suppose that the  harmonic measure $\omega$ in $\Omega$  with pole  at infinity satisfies $\log\left(\frac{d\omega}{d\sigma}\right)\in \textrm{VMO}(\sigma)$.
Denote by $u$ the associated Green function. Then
\begin{equation}\label{eqgrad1}
|\nabla u(x)|\leq \int_{\partial\Omega} \frac{d\omega}{d\sigma}(y)\,d\omega^x(y)
\quad\mbox{ for all $x\in\Omega$.}
\end{equation}
\end{lemma}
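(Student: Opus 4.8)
The plan is to represent $\nabla u$ via a Poisson-kernel type formula and then estimate. Fix $x\in\Omega$ and let $r_0 = d_\Omega(x)$. The function $u$ is harmonic and positive in $B(x,r_0)\subset\Omega$, so $\nabla u(x)$ can be recovered from the boundary values of $u$ on any sphere $\partial B(x,t)$ with $t<r_0$ via the gradient of the Poisson kernel of the ball; but a cleaner route is to use harmonic measure of $\Omega$ itself. Since $u$ vanishes on $\partial\Omega$ and is positive and harmonic in $\Omega$, for $\sigma$-a.e.\ $y\in\partial\Omega$ the nontangential limit of $\nabla u$ is $-\frac{d\omega}{d\sigma}(y)\,\vec n(y)$ by \Lemma{lemnontang}. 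The first main step is therefore to justify the representation
\[
\nabla u(x) = \int_{\partial\Omega} \nabla u^*(y)\,d\omega^x(y) \cdot(-\vec n\text{-factor}),
\]
i.e.\ to show that $\partial_j u$ (each partial derivative), being harmonic in $\Omega$, equals the harmonic extension of its nontangential boundary trace, so that $\partial_j u(x) = \int_{\partial\Omega} (-\vec n_j(y))\,\tfrac{d\omega}{d\sigma}(y)\,d\omega^x(y)$. Taking absolute values and using $|\vec n(y)|=1$ then gives $|\nabla u(x)| \le \sum_j |\partial_j u(x)| \cdot(\text{something})$; to get the clean bound \eqref{eqgrad1} without a dimensional constant one should instead argue $|\nabla u(x)| = \bigl|\int_{\partial\Omega}(-\vec n(y))\,\tfrac{d\omega}{d\sigma}(y)\,d\omega^x(y)\bigr| \le \int_{\partial\Omega}\tfrac{d\omega}{d\sigma}(y)\,d\omega^x(y)$, treating the integral as a vector-valued integral and using the triangle inequality for vector-valued integrals together with $|\vec n|=1$.

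The technical content is in justifying that a bounded harmonic function on $\Omega$ with a given $L^1(\omega^x)$ nontangential boundary trace is recovered by the Poisson integral of that trace against $\omega^x$. For this I would first work locally: fix a ball $B$ centered at $\partial\Omega$ containing $x$ in a corkscrew position, split $\partial\Omega = (2B\cap\partial\Omega)\cup(\partial\Omega\setminus 2B)$, and handle the far-away part using the growth estimate on $\omega$ and the decay of $\nabla u$ coming from \Lemma{lemgreen*} (which gives $|\nabla u(y)|\lesssim |y|^{-(n-1+\alpha)}$ type bounds via $u\approx g(\cdot,\infty)$-comparisons and interior estimates). The near part requires knowing that $\nabla u\in L^1$ of harmonic measure near the boundary and that the nontangential maximal function of $\nabla u$ is controlled; this is where \Lemma{lemsumw} and the VMO/reverse-Hölder consequences (Corollary~\ref{corovmo0}, \Lemma{lemvmo}) enter, letting one dominate $\int_{B\cap\Omega}|\nabla u(y)|\,dy$-type quantities by $\omega(B)$ and pass to the limit in a Green's-identity / Poisson-representation argument.

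I expect the main obstacle to be rigorously establishing the representation formula $\partial_j u(x)=\int \partial_j u^*\,d\omega^x$, i.e.\ controlling the nontangential maximal function of $\nabla u$ well enough to invoke dominated convergence and identify the harmonic extension of the boundary trace with $\partial_j u$ itself. The subtlety is that $\nabla u$ need not vanish at the boundary, so one cannot use the simplest maximum-principle comparison; one must instead peel off the singular part carefully, probably by writing $u$ near a boundary ball as a single layer potential plus a remainder (using the jump relations from \Proposition{propoplem} as in Section~\ref{sec2}) whose gradient is genuinely bounded, and then the representation for the smooth remainder is standard while the single-layer part is handled by the $L^p$ theory of the Riesz transform on the uniformly rectifiable boundary. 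Once that representation is in hand, the inequality \eqref{eqgrad1} is immediate from the triangle inequality and $|\vec n|=1$.
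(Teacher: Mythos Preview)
Your overall strategy is correct and matches the paper's: once one knows that $\nabla u$ admits the Poisson representation
\[
\nabla u(x)=-\int_{\partial\Omega}\vec n(y)\,\frac{d\omega}{d\sigma}(y)\,d\omega^x(y),
\]
the bound \eqref{eqgrad1} follows by the triangle inequality for the vector-valued integral. You also correctly identify that justifying this representation is the whole difficulty.

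However, your plan for the justification has a gap. You propose to handle the ``far-away'' contribution using decay of $\nabla u$ at infinity, citing Lemma~\ref{lemgreen*} for a bound of the form $|\nabla u(y)|\lesssim|y|^{-(n-1+\alpha)}$. This is not available: $u$ is the Green function with pole at \emph{infinity}, so $u(y)\approx\omega(B(y,2d_\Omega(y)))\,d_\Omega(y)^{1-n}$ and hence $|\nabla u(y)|\lesssim u(y)/d_\Omega(y)\approx \omega(B(y,2d_\Omega(y)))/d_\Omega(y)^n$, which is merely bounded (up to $r^{\varepsilon}$ factors from Corollary~\ref{corovmo0}) and does \emph{not} decay as $|y|\to\infty$. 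Consequently a direct splitting into near/far pieces does not close, and this is precisely what forces the paper's localisation-by-cutoff technique: one multiplies $\nabla u$ by a smooth compactly supported $\varphi_R$, subtracts the Green potential $w_R(x)=\int_\Omega g(x,y)\,\Delta[\varphi_R\nabla u](y)\,dy$ to restore harmonicity, and then represents the resulting $h_R=\varphi_R\nabla u-w_R$ (which now has compactly supported boundary data and vanishes at infinity) via the Jerison--Kenig theory, after checking $\mathcal N_1 h_R\in L^1(\omega^x)$ by dominating $\mathcal N_1(\varphi_R\nabla u)$ by a truncated maximal function of $h=d\omega/d\sigma$ and invoking Corollary~\ref{corovmo}. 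The delicate estimates on $w_R$ in Lemma~\ref{lemtec1} (which in turn use Lemma~\ref{lemsumw}) are what allow one to let $R\to\infty$ and recover the global representation. Your single-layer-potential idea is not wrong in spirit, but it does not by itself address the lack of decay at infinity; some compactly supported truncation plus a controlled harmonic correction is needed.
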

\vv

 The proof of this lemma would be quite immediate if the function
$\frac{d\omega}{d\sigma}$ inside the integral in \rf{eqgrad1} were compactly supported, 
taking into account that $\nabla u$ is harmonic. However, this is not
the case and so the arguments are more delicate. The next auxiliary lemma will be used to take care of this question
by a localization of singularities technique.

\begin{lemma}\label{lemtec1}
Under the assumptions of Lemma \ref{lemgradgreen1}, suppose that $0\in\partial\Omega$. Fix $R>1$ large and let $\vphi_R\in C_c^\infty(\R^{n+1})$ such that $\chi_{B(0,R)}\leq \vphi_R\leq \chi_{B(0,2R)}$, $|\nabla^j
\vphi_R|\lesssim 1/R^j$ for $j=1,2$. For $x\in\Omega$, define
$$w_R(x) = \int_\Omega g(x,y)\,\Delta[\vphi_R\nabla u ](y)\,dy.$$
Then $w_R\in C^{\alpha/2}(\overline \Omega)$ for some $\alpha>0$, $w_R|_{\partial\Omega}\equiv 0$, 
and the following estimates hold for $x\in\Omega$:
\begin{itemize}
\item[(a)] $\ds |w_R(x)| \lesssim \frac{\omega(B(0,R))}{R^n}\,\left(\frac{d_\Omega(x)}R\right)^{\alpha/2}\;\;$ if $|x|\leq 4R$.
\vv

\item[(b)]  $\ds |w_R(x)| \lesssim \frac{\omega(B(0,R))}{|x|^{n-1+\alpha/2}\,R^{1-\alpha/2}}\,\left(\frac{d_\Omega(x)}{|x|}\right)^{\alpha/2}\;\;$ if $|x| > 4R$.

\end{itemize}
\end{lemma}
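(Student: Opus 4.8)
The plan is to recognize $w_R$ as the Green potential of the source $F:=\Delta[\varphi_R\nabla u]$ and to read off all three assertions from a pointwise bound on $F$ together with the Green function estimate of Lemma~\ref{lemgreen*} and the summation Lemma~\ref{lemsumw}. Since $\Delta u=0$ in $\Omega$, each component of $\nabla u$ is harmonic there, so classically $F=(\Delta\varphi_R)\,\nabla u+2\,\nabla\varphi_R\cdot D^2u$; this is smooth in $\Omega$, supported in $\mathcal A:=\overline{B(0,2R)}\setminus B(0,R)$, and satisfies $|F|\lesssim R^{-2}|\nabla u|+R^{-1}|D^2u|$ there. Interior derivative estimates on $B(y,d_\Omega(y))\subset\Omega$ plus Harnack give $|\nabla u(y)|\lesssim u(y)/d_\Omega(y)$ and $|D^2u(y)|\lesssim u(y)/d_\Omega(y)^2$, and \eqn{wsimu} (with Harnack once more) gives $u(y)\approx\omega(B(\xi_y,d_\Omega(y)))\,d_\Omega(y)^{1-n}$, where $\xi_y\in\partial\Omega$ realizes $d_\Omega(y)$. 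Because $0\in\partial\Omega$ forces $d_\Omega(y)\le|y|\lesssim R$ on $\mathcal A$, the $R^{-1}|D^2u|$ term dominates and one obtains the single clean bound $|F(y)|\lesssim R^{-1}\,\omega(B(y,2d_\Omega(y)))\,d_\Omega(y)^{-n-1}$ on $\mathcal A\cap\Omega$ (using also $\omega(B(\xi_y,d_\Omega(y)))\le\omega(B(y,2d_\Omega(y)))$).

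Then $|w_R(x)|\le\int_{\mathcal A\cap\Omega}g(x,y)\,|F(y)|\,dy$, and I would insert Lemma~\ref{lemgreen*} after splitting its Hölder exponent in half, applying it ``once in each variable'':
\begin{equation*}
g(x,y)\ \lesssim\ |x-y|^{1-n}\Big(\tfrac{d_\Omega(x)}{|x-y|}\Big)^{\alpha/2}\Big(\tfrac{d_\Omega(y)}{|x-y|}\Big)^{\alpha/2}\ =\ |x-y|^{1-n-\alpha}\,d_\Omega(x)^{\alpha/2}\,d_\Omega(y)^{\alpha/2}.
\end{equation*}
This reduces (a) and (b) to showing that
\begin{equation*}
I(x):=\int_{\mathcal A\cap\Omega}|x-y|^{1-n-\alpha}\,d_\Omega(y)^{\alpha/2}\,\frac{\omega(B(y,2d_\Omega(y)))}{d_\Omega(y)^{n+1}}\,dy
\end{equation*}
is $\lesssim R^{1-n-\alpha/2}\,\omega(B(0,R))$ when $|x|\le 4R$ and $\lesssim|x|^{1-n-\alpha}\,R^{\alpha/2}\,\omega(B(0,R))$ when $|x|>4R$; the prefactor $R^{-1}d_\Omega(x)^{\alpha/2}$ then produces exactly the right-hand sides of (a) and (b). The point of the half-splitting is that $d_\Omega(y)^{\alpha/2}\,\omega(B(y,2d_\Omega(y)))\,d_\Omega(y)^{-n-1}$ is precisely the integrand to which Lemma~\ref{lemsumw} applies with $\varepsilon=\alpha/2$, so $\int_{B\cap\Omega}d_\Omega(y)^{\alpha/2}\,\omega(B(y,2d_\Omega(y)))\,d_\Omega(y)^{-n-1}\,dy\lesssim r(B)^{\alpha/2}\omega(B)$ for every ball $B$ centered on $\partial\Omega$ with the pole outside $10B$. (One may freely shrink $\alpha$, so assume $\alpha<1$; this makes $|x-y|^{1-n-\alpha}$ locally integrable and the dyadic series below converge.)

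For $|x|>4R$ this is immediate: on $\mathcal A$ one has $|x-y|\approx|x|$, hence $I(x)\approx|x|^{1-n-\alpha}\int_{\mathcal A\cap\Omega}(\cdots)\,dy\lesssim|x|^{1-n-\alpha}R^{\alpha/2}\omega(B(0,R))$ by Lemma~\ref{lemsumw} with $B=B(0,4R)$ and doubling. For $|x|\le 4R$ the only trouble is that $|x-y|$ may be small when $x$ lies in or near the annular shell, so $|x-y|^{1-n-\alpha}$ cannot be pulled out; I would split $I(x)$ according to the size of $|x-y|$. On $\{|x-y|\ge R/4\}$ one bounds $|x-y|^{1-n-\alpha}$ by $(R/4)^{1-n-\alpha}$ and uses the global bound from Lemma~\ref{lemsumw} on $B(0,4R)$, getting $\lesssim R^{1-n-\alpha/2}\omega(B(0,R))$. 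On $\{|x-y|<d_\Omega(x)/2\}$ one has $d_\Omega(y)\approx d_\Omega(x)$ and $\omega(B(y,2d_\Omega(y)))\lesssim\omega(B(\xi_x,Cd_\Omega(x)))\lesssim(d_\Omega(x)/R)^{n(1-\varepsilon')}\omega(B(0,R))$ by Corollary~\ref{corovmo0}, and integrating $|x-y|^{1-n-\alpha}$ over a ball of radius $d_\Omega(x)/2$ gives (using $d_\Omega(x)\le 4R$ and $\varepsilon'$ small) again $\lesssim R^{1-n-\alpha/2}\omega(B(0,R))$. Finally the range $d_\Omega(x)/2\le|x-y|<R/4$ is decomposed into dyadic shells $|x-y|\approx2^{-k}R$, each contained in a boundary ball $B(\xi_x,2^{1-k}R)$; on the $k$-th shell Lemma~\ref{lemsumw} ($\varepsilon=\alpha/2$) and Corollary~\ref{corovmo0} ($\varepsilon'$ small) bound the integral by $\approx(2^{-k}R)^{1-n-\alpha/2}\,2^{-kn(1-\varepsilon')}\,\omega(B(0,R))$, and since the exponent $-1+\alpha/2+n\varepsilon'$ is negative the series in $k$ sums to $\lesssim R^{1-n-\alpha/2}\omega(B(0,R))$. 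This bookkeeping — the dyadic decomposition and the repeated balancing of the exponents coming from Lemmas~\ref{lemgreen*} and~\ref{lemsumw} and Corollary~\ref{corovmo0} — is the main technical obstacle; everything else is routine.

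As for the regularity statement: the bounds (a)–(b) already show $w_R$ extends continuously to $\overline\Omega$ with $w_R|_{\partial\Omega}\equiv0$ and with boundary decay $|w_R(x)|\lesssim d_\Omega(x)^{\alpha/2}\le|x-\xi|^{\alpha/2}$ for $\xi\in\partial\Omega$. Since $w_R$ is harmonic, hence $C^\infty$, on $\Omega\setminus\overline{B(0,2R)}$, smooth in $\Omega\setminus\mathcal A$, and solves an elliptic equation with smooth right-hand side in $\Omega\cap\mathcal A$, combining its interior smoothness with this boundary decay (and, on the part of $\partial\Omega$ outside $\overline{B(0,2R)}$, with the boundary Hölder estimate of Lemma~\ref{lemholder} applied to $w_R$) yields $w_R\in C^{\alpha/2}(\overline\Omega)$ by the standard ``decay implies Hölder'' argument; I expect this last point to be invoked from standard elliptic/potential theory rather than proved from scratch.
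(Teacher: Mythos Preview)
Your proof is correct and follows essentially the same route as the paper's: the same pointwise bound on $F=\Delta[\varphi_R\nabla u]$, the same ``half-split'' Green function estimate $g(x,y)\lesssim|x-y|^{1-n}(d_\Omega(x)/|x-y|)^{\alpha/2}(d_\Omega(y)/|x-y|)^{\alpha/2}$, the same dyadic decomposition in $|x-y|$, and the same appeal to Lemma~\ref{lemsumw} on each scale.

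There is one difference worth noting. To compare $\omega$ at different scales (i.e., to bound $\omega(B(\xi_x,r))$ in terms of $\omega(B(0,R))$), you invoke the $A_\infty$-type estimate of Corollary~\ref{corovmo0}, which uses the hypothesis $\log h\in\textrm{VMO}(\sigma)$. The paper instead uses only the NTA boundary H\"older decay (Lemma~\ref{lemholder}) applied to $u$: from $u(x_j)\lesssim(r_j/R)^{\alpha}u(x_R)$ and $u(z)\approx\omega(B(z,d_\Omega(z)))\,d_\Omega(z)^{1-n}$ one gets $\omega(B(\xi_x,r_j))\lesssim(r_j/R)^{n-1+\alpha}\omega(B(0,R))$. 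Both give a power decay that makes the dyadic sum converge, but the paper's route is purely structural (NTA only) while yours brings in the VMO hypothesis at this step. A second, cosmetic, difference: on the near-diagonal region $|x-y|<d_\Omega(x)/2$ the paper uses the trivial bound $g(x,y)\lesssim|x-y|^{1-n}$, whereas you persist with the split estimate; your bound is pointwise weaker there (since $d_\Omega(x)/|x-y|>1$), but you correctly observe that $|x-y|^{1-n-\alpha}$ remains locally integrable for $\alpha<1$, so no harm is done.
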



\begin{proof}
By the relationship between Green's function and harmonic measure, for all $y\in\Omega$ we have
$$u(y)\approx \frac1{d_\Omega(y)^{n-1}}\,\omega(B(y,2d_\Omega(y))),$$
and by standard estimates for positive harmonic functions we derive
$$|\nabla u(y)|\lesssim \frac{u(y)}{d_\Omega(y)} \approx \frac{\omega(B(y,2d_\Omega(y)))}{d_\Omega(y)^n}
\quad \mbox{ and }\quad
|\nabla^2 u(y)|\lesssim \frac{u(y)}{d_\Omega(y)^2} \approx \frac{\omega(B(y,2d_\Omega(y)))}{d_\Omega(y)^{n+1}}
.$$
Thus,
\begin{align}\label{eqdjk28}
|w_R(x)| & = \left|\int_\Omega g(x,y)\,\bigl(\Delta\vphi_R(y)\,\nabla u (y) + 2\nabla\vphi_R(y) \cdot \nabla^2 u(y)\bigr)\,dy\right|\\
& \lesssim \int_{A(0,R,2R)\cap\Omega} g(x,y)\,\left( \frac{\omega(B(y,2d_\Omega(y)))}{R^2\,d_\Omega(y)^n} + \frac{\omega(B(y,2d_\Omega(y)))}{R\,d_\Omega(y)^{n+1}}\right)\,dy\nonumber\\
& \lesssim \int_{B(0,2R)\cap\Omega} g(x,y)\,\frac{\omega(B(y,2d_\Omega(y)))}{R\,d_\Omega(y)^{n+1}}\,dy.\nonumber
\end{align}

\vv
\noi{\bf Case 1:} $|x|\leq 4R$.\\
We split the integral on the right hand side of \rf{eqdjk28} as follows:
\begin{align}\label{eqspl1}
|w_R(x)| & \lesssim \int_{|y-x|\leq d_\Omega(x)/2} g(x,y)\,\frac{\omega(B(y,2d_\Omega(y)))}{R\,d_\Omega(y)^{n+1}}\,dy \\
&\quad+
\int_{\substack{y\in B(0,2R)\cap\Omega:\\ |y-x|> d_\Omega(x)/2}} g(x,y)\,\frac{\omega(B(y,2d_\Omega(y)))}{R\,d_\Omega(y)^{n+1}}\,dy
 =: I_1+ I_2.\nonumber
\end{align}

First we will deal with $I_1$. In the domain of integration of $I_1$ we have $d_\Omega(y)\approx d_\Omega(x)$. Taking into account that $\omega$ is doubling, in this case we derive $\omega(B(y,2d_\Omega(y)))\approx \omega(B(x,2d_\Omega(x)))$. Then using also 
 the trivial estimate $g(x,y)\lesssim1/|x-y|^{n-1}$, we get
$$I_1\lesssim \int_{|y-x|\leq d_\Omega(x)/2} \frac1{|x-y|^{n-1}}\,\frac{\omega(B(x,2d_\Omega(x))}{R\,d_\Omega(x)^{n+1}}\,dy
\approx \frac{\omega(B(x,2d_\Omega(x))}{R\,d_\Omega(x)^{n-1}}
.$$
Notice that, by Lemma \ref{lemholder},
\begin{equation}\label{eqlik32}
u(x)\lesssim \left(\frac{d_\Omega(x)}{R}\right)^\alpha\,\sup_{y\in\partial B(0,8R)\cap\Omega}u(y) \lesssim
\left(\frac{d_\Omega(x)}{R}\right)^\alpha\,u(x_R),
\end{equation}
where $x_R$ is a corkscrew point for $B(0,R)$. That is, $x_R\in B(0,R)\cap \Omega$ and $d_\Omega(x_R)\approx R$. Hence
using that $\omega(B(z,2d_\Omega(z)))\approx u(z)\,d_\Omega(z)^{n-1}$ both for $z=x$ and $z=x_R$, we deduce that
\begin{equation}\label{eqi190}
I_1\lesssim\frac{\omega(B(x,2d_\Omega(x)))}{R\,d_\Omega(x)^{n-1}} 
\lesssim
\left(\frac{d_\Omega(x)}{R}\right)^\alpha\,\frac{\omega(B(x_R,2d_\Omega(x_R)))}{R\,d_\Omega(x_R)^{n-1}} \approx 
\left(\frac{d_\Omega(x)}{R}\right)^\alpha\,\frac{\omega(B(0,R))}{R^n}.
\end{equation}

\vv

We consider now the integral $I_2$ in \rf{eqspl1}. To estimate this we use the inequality
\begin{equation}\label{eqgreen**}
g(x,y)\lesssim \frac1{|x-y|^{n-1}}\,\left(\frac{d_\Omega(x)}{|x-y|}\right)^{\alpha/2} \,\left(\frac{d_\Omega(y)}{|x-y|}\right)^{\alpha/2},
\end{equation}
which is an immediate consequence of \rf{eqgreen*}.
To shorten notation, for each integer $j\geq0$ we write $r_j:=2^j\,d_\Omega(x)$. 
Denote by $j_{\max}$ the least integer such that $B(0,2R)\subset B(x,r_{j_{\max}})$, so that $r_{j_{\max}}\approx R$.
Then plugging the estimate \rf{eqgreen**} inside $I_2$ and splitting, we obtain 
$$I_2\lesssim 
\sum_{0\leq j \leq j_{\max}} \frac1{R\,r_j^{n-1}}\left(\frac{d_\Omega(x)}{r_j}\right)^{\alpha/2} \!
\int_{\substack{y\in \Omega: \,r_{j-1}<|y-x|\leq r_j}} \!\left(\frac{d_\Omega(y)}{r_j}\right)^{\alpha/2}
\frac{\omega(B(y,2d_\Omega(y)))}{d_\Omega(y)^{n+1}}\,dy.$$
Let $\xi_x\in\partial\Omega$ be such that $|x-\xi_x|=d_\Omega(x)$. It is immediate to check that
if $|y-x|\leq r_j=2^jd_\Omega(x)$, then $y\in \overline B(\xi_x,2r_j)$. So the last integral is bounded above by
$$\int_{\Omega\cap \overline B(\xi_x,2r_j)} \!\left(\frac{d_\Omega(y)}{r_j}\right)^{\alpha/2}
\frac{\omega(B(y,2d_\Omega(y)))}{d_\Omega(y)^{n+1}}\,dy,$$
and then, by Lemma \ref{lemsumw}, this does not exceed $C\,\omega(B(\xi_x,r_j))$. Hence,
\begin{equation}\label{eqdk472}
I_2\lesssim 
\sum_{0\leq j \leq j_{\max}} \frac1{R\,r_j^{n-1}}\left(\frac{d_\Omega(x)}{r_j}\right)^{\alpha/2} \omega(B(\xi_x,r_j)).
\end{equation}

To estimate the right hand in the inequality above we argue as in \rf{eqlik32}. We consider a corkscrew point $x_j$ in each ball 
$B(\xi_x,r_j)$, and then since $\dist(x_j,\partial\Omega)\approx r_j$, we deduce
$$u(x_j)  \lesssim
\left(\frac{r_j}{R}\right)^\alpha\,u(x_R)$$
(recall that $x_R$ is a corkscrew point for $B(0,R)$). Thus,
$$\frac{\omega(B(\xi_x,r_j))}{r_j^{n-1}}\,  \lesssim
\left(\frac{r_j}{R}\right)^\alpha\,\frac{\omega(B(0,R))}{R^{n-1}}.$$
Plugging this estimate into \rf{eqdk472} we obtain
\begin{align*}
I_2& \lesssim 
\sum_{0\leq j \leq j_{\max}} \left(\frac{d_\Omega(x)}{r_j}\right)^{\alpha/2} \left(\frac{r_j}{R}\right)^\alpha\,\frac{\omega(B(0,R))}{R^{n}}\\
& = \frac{d_\Omega(x)^{\alpha/2}}{R^\alpha}\,\frac{\omega(B(0,R))}{R^{n}}
\sum_{0\leq j \leq j_{\max}} r_j^{\alpha/2}.
\end{align*}
Since the last sum is geometric, it turns out that
$$\sum_{0\leq j \leq j_{\max}} r_j^{\alpha/2} \approx r_{j_{\max}}^{\alpha/2} \approx R^{\alpha/2}.$$
Therefore,
$$I_2 \lesssim  \frac{d_\Omega(x)^{\alpha/2}}{R^{\alpha/2}}\,\frac{\omega(B(0,R))}{R^{n}}.$$
Together with the estimate for $I_1$ in \rf{eqi190} this yields the inequality (a) in the lemma.
\vv

\noi{\bf Case 2:} $|x|> 4R$.\\
To estimate the integral on the right hand side of \rf{eqdjk28}
we use the fact that, for $y\in B(0,2R)$, by \rf{eqgreen*},
$$
g(x,y)\lesssim\frac1{|x|^{n-1}}\,\left(\frac{d_\Omega(x)}{|x|}\right)^{\alpha/2} \,\left(\frac{d_\Omega(y)}{|x|}\right)^{\alpha/2},$$
taking into account that $|x-y|\approx|x|$. Then we get
\begin{align*}
|&w_R(x)|  \lesssim \frac1{|x|^{n-1}}\,\left(\frac{d_\Omega(x)}{|x|}\right)^{\alpha/2}\int_{B(0,2R)\cap\Omega} \left(\frac{d_\Omega(y)}{|x|}\right)^{\alpha/2}\,\frac{\omega(B(y,2d_\Omega(y)))}{R\,d_\Omega(y)^{n+1}}\,dy\\
& =
\frac1{R|x|^{n-1}}\,\left(\frac{d_\Omega(x)}{|x|}\right)^{\alpha/2}\,\left(\frac{R}{|x|}\right)^{\alpha/2}
\int_{B(0,2R)\cap\Omega} \left(\frac{d_\Omega(y)}{R}\right)^{\alpha/2}\,\frac{\omega(B(y,2d_\Omega(y)))}{d_\Omega(y)^{n+1}}\,dy
.
\end{align*}
By Lemma \ref{lemsumw}, the last integral above does not exceed $C\,\omega(B(0,R))$, and so we deduce that
$$|w_R(x)|  \lesssim 
\frac1{R|x|^{n-1}}\,\left(\frac{d_\Omega(x)}{|x|}\right)^{\alpha/2}\,\left(\frac{R}{|x|}\right)^{\alpha/2}
\omega(B(0,R))
,
$$
which gives the inequality (b) in the lemma.
\end{proof}

\vv

\begin{proof}[\bf Proof of Lemma \ref{lemgradgreen1}]
The arguments are similar to  the ones for \cite[Theorem 3.1]{Kenig-Toro2}. For the reader's convenience, we show the details below.

Suppose that $0\in\partial\Omega$ and, for $R\geq1$, let $\vphi_R$ and  $w_R$ the functions introduced in Lemma
\ref{lemtec1}. For $x\in\Omega$, we define
$$h_R(x) = \vphi_R(x)\nabla u(x) - w_R(x).$$
Since $\Delta w_R= \Delta[\vphi_R\nabla u ]$ in $\Omega$, it turns out that $h_R$ is harmonic in $\Omega$. Further, the 
estimates (a) and (b) in Lemma
\ref{lemtec1}, in particular, ensure that $w_R$ vanishes continuously at $\partial\Omega$. Thus $h_R$ vanishes on $\partial
\Omega\setminus B(0,2R)$.

By Lemma \ref{lemnontang} it follows that, for $\sigma$-a.e.\ $y\in\partial\Omega$, $\nabla u(z)$
converges non-tangentially to $-\frac{d\omega}{d\sigma}(y)\vec{n}(y)$ as $\Omega\ni z\to y$. Also, as mentioned above, $w_R(z)\to0$ as 
$z\to y$. Therefore, if we denote
$$h(y)=\frac{d\omega}{d\sigma}(y),$$
we have 
$$\lim_{\Gamma^+(y)\ni z\to y} h_R(z) = -\vphi_R(y)\,h(y)\,\vec{n}(y)\quad
\mbox{ for $\sigma$-a.e.\ $y\in\partial\Omega$.}$$
We claim that for all $x\in\Omega$,
\begin{equation}\label{eqclau*20}
h_R(x) = -\int \vphi_R(y)\,h(y)\,\vec{n}(y)\,d\omega^x(y).
\end{equation}
To prove this, recalling that $h_R$ vanishes at $\infty$,
 by Theorem 5.8 and Lemma 8.3 in \cite{Jerison-Kenig} it suffices to show that
$\NN_1 h_R\in L^1(\omega^x)$ for all $x\in\Omega$, where $\NN_1$ stands for the operator defined by
$$\NN_1 h_R(y) = \sup_{z\in\Gamma_1^+(y)} h_R(z),$$
with $\Gamma_1^+(y) = \Gamma^+(y)\cap \overline B(y,1)$. By Lemma \ref{lemtec1}, $w_R$ is bounded, and
thus $\NN_1 w_R\in L^1(\omega^x)$. Hence it is enough to prove that 
$\NN_1(\vphi_R\nabla u)\in L^1(\omega^x)$. To this end, notice that if $z\in\Gamma_1^+(y)$, then
$$|\nabla u(z)| \lesssim \frac{u(z)}{d_\Omega(z)} \approx \frac{\omega(B(y,d_\Omega(z)))}{d_\Omega(z)^n}.$$
Thus, 
$$\NN_1(\vphi_R\nabla u)(y) \lesssim \sup_{0<r\leq 1} \frac{\omega(B(y,r))}{r^n} =
\sup_{0<r\leq 1} \frac1{r^n}\int_{B(y,r)}|h|\,d\sigma =:
\MM_1h(y).$$
Also, $\NN_1(\vphi_R\nabla u)(y)$ vanishes out of $B':=\overline B(0,2R+1)$ because in this case
$\vphi_R(z)=0$ whenever $z\in\Gamma_1^+(y)$. Therefore, 
$$\int \!\NN_1(\vphi_R\nabla u)\,d\omega^x = \int_{B'} \NN_1(\vphi_R\nabla u)\,K_x\,d\sigma
\lesssim \left(\int_{B'} |\MM_1 h|^2\,d\sigma\right)^{1/2}\! \left(\int_{B'} (K_x)^2\,d\sigma\right)^{1/2}\!.
$$
By the $L^2(\sigma)$ boundedness of $\MM_1$ it follows that
$$\int_{B'} |\MM_1 h|^2\,d\sigma = \int_{B'} |\MM_1 (\chi_{B''}h)|^2\,d\sigma
<\infty,$$
where $B''=\overline B(0,2R+2)$. Also, by Corollary \ref{corovmo},
$$\int_{B'} (K_x)^2\,d\sigma<\infty,$$
and so $\NN_1(\vphi_R\nabla u)\in L^1(\omega^x)$ and \rf{eqclau*20} holds.

From the definition of $h_R$ and \rf{eqclau*20} we deduce that
\begin{equation}\label{eqig31}
\vphi_R(x)\,\nabla u(x) = -\int \vphi_R(y)\,h(y)\,\vec{n}(y)\,d\omega^x(y) + w_R(x).
\end{equation}
Hence, letting $R\to\infty$,
$$|\nabla u(x)| \leq \int |h(y)|\,d\omega^x(y) + \liminf_{R\to\infty} |w_R(x)|.$$
By Lemma \ref{lemtec1} (a) and Corollary \ref{corovmo0} (with $\ve$ small enough), we deduce easily
that $w_R(x)\to 0$ as $R\to\infty$, for any fixed $x\in \Omega$, and then the lemma follows.
\end{proof}

\vv

Now we wish to obtain a variant of Lemma \ref{lemgradgreen1} suitable for the case when the pole for harmonic measure is finite. This is what we do in the next lemma.

\begin{lemma}\label{lemgradgreen2}
Let $\Omega\subset \R^{n+1}$ be a  chord-arc domain. Suppose that the  harmonic measure $\omega^p$ in $\Omega$  with pole  at $p\in\Omega$ satisfies $\log\left(\frac{d\omega^p}{d\sigma}\right)\in \textrm{VMO}(\sigma)$. Then, for all $x\in\Omega$ such 
$d_\Omega(x)\leq d_\Omega(p)/8$ and all $q_x\in\partial\Omega$ such that $|x-q_x|\leq d_\Omega(p)/8$,
\begin{equation}\label{eqgrad2}
|\nabla g(x,p)|\leq \int_{\partial\Omega} K_p(y) \,d\omega^x(y) + C\,\frac{\omega^p\bigl(B(q_x,d_\Omega(p))\bigr)}{d_\Omega(p)^n}\,\left(\frac{d_\Omega(x)}{d_\Omega(p)}\right)^{\alpha/2}.
\end{equation}
\end{lemma}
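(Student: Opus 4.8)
The plan is to run the localization-of-singularities argument used in the proof of Lemma \ref{lemgradgreen1}, but with the cutoff frozen at the scale $\rho:=d_\Omega(p)$ instead of being sent to infinity; the auxiliary function that tended to $0$ there will now survive and produce the error term in \eqref{eqgrad2}. Since every term in \eqref{eqgrad2} scales like $\lambda^{-n}$ under $\Omega\mapsto\lambda\Omega$ (and the VMO hypothesis is scale invariant), we may assume $\rho=1$. First I would fix $\varphi\in C_c^\infty(\R^{n+1})$ with $\chi_{B(q_x,\rho/6)}\le\varphi\le\chi_{B(q_x,\rho/4)}$ and $|\nabla^j\varphi|\lesssim\rho^{-j}$ for $j=1,2$; then $\varphi(x)=1$ (because $|x-q_x|\le\rho/8<\rho/6$) while $\dist(p,\supp\varphi)\gtrsim\rho$ (because $|p-q_x|\ge d_\Omega(p)=\rho$). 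Set
\[
w(x') = \int_\Omega g(x',y)\,\Delta\bigl[\varphi\,\nabla g(\cdot,p)\bigr](y)\,dy,\qquad h:=\varphi\,\nabla g(\cdot,p)-w.
\]
Since $\varphi$ vanishes near $p$, the singular term $\varphi\,\Delta\nabla g(\cdot,p)=\varphi\,\nabla\Delta g(\cdot,p)$ is zero, so $\Delta[\varphi\nabla g(\cdot,p)]=2\nabla\varphi\cdot\nabla^2 g(\cdot,p)+\Delta\varphi\,\nabla g(\cdot,p)$ is a bounded function supported in $\supp\nabla\varphi\subset\Omega\setminus(\{p\}\cup\partial\Omega)$; consequently $h$ is harmonic in all of $\Omega$, and, by interior estimates for $g(\cdot,p)$ together with \eqref{eqgreen*} (exactly as in Lemma \ref{lemtec1}), $w$ is bounded, decays at infinity, and vanishes continuously on $\partial\Omega$.

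The second step is the representation formula. As in the proof of Lemma \ref{lemgradgreen1} one checks $\NN_1 h\in L^1(\omega^{x'})$ for every $x'\in\Omega$: the contribution of $w$ is harmless since $w$ is bounded, and, writing $K_p:=d\omega^p/d\sigma$, one has $\NN_1(\varphi\nabla g(\cdot,p))\lesssim\MM_1 K_p$ and this is supported in a ball $B'\supset\supp\varphi$, so $\int\NN_1(\varphi\nabla g(\cdot,p))\,d\omega^{x'}\lesssim\|\MM_1 K_p\|_{L^2(\sigma,B')}\,\|K_{x'}\|_{L^2(\sigma,B')}<\infty$ by the $L^2(\sigma)$-boundedness of $\MM_1$, Lemma \ref{lemvmo} applied to $K_p=e^{\log(d\omega^p/d\sigma)}$, and Corollary \ref{corovmo}. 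Then Theorem 5.8 and Lemma 8.3 of \cite{Jerison-Kenig} give $h(x')=\int_{\partial\Omega}h^*\,d\omega^{x'}$ with $h^*$ the nontangential trace of $h$; by Lemma \ref{lemnontang} (with the pole $p$) and $w|_{\partial\Omega}=0$ we get $h^*=-\varphi\,K_p\,\vec n$ $\sigma$-a.e. Evaluating at $x'=x$, using $\varphi(x)=1$, $0\le\varphi\le1$, $|\vec n|=1$ and $K_p\ge0$,
\[
\nabla g(x,p)=-\int_{\partial\Omega}\varphi\,K_p\,\vec n\,d\omega^x+w(x),\qquad\text{hence}\qquad|\nabla g(x,p)|\le\int_{\partial\Omega}K_p\,d\omega^x+|w(x)|.
\]

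It remains to show $|w(x)|\lesssim \rho^{-n}\,\omega^p(B(q_x,\rho))\,(d_\Omega(x)/\rho)^{\alpha/2}$, which I expect to be the main obstacle. Starting from $|w(x)|\le\int_{\supp\nabla\varphi\cap\Omega}g(x,y)\,|\Delta[\varphi\nabla g(\cdot,p)](y)|\,dy$ and $|\nabla^j g(y,p)|\lesssim g(y,p)/d_\Omega(y)^j$ ($j=1,2$), one is led, on the part of $\supp\nabla\varphi$ where $|y-p|\gtrsim d_\Omega(y)$, to replace $g(y,p)$ by $\omega^p(B(y,2d_\Omega(y)))\,d_\Omega(y)^{1-n}$ and thus to exactly the integral $\int_{B(q_x,\rho/4)\cap\Omega}g(x,y)\,\dfrac{\omega^p(B(y,2d_\Omega(y)))}{\rho\,d_\Omega(y)^{n+1}}\,dy$ treated in Case 1 of Lemma \ref{lemtec1}; splitting it at $|y-x|\approx d_\Omega(x)$, and using \eqref{eqgreen*}, \eqref{eqgreen**}, Lemma \ref{lemholder} applied to $g(\cdot,p)$ (legitimate since $p\notin B(q_x,\rho/2)$) and Lemma \ref{lemsumw} with $B=B(q_x,\rho/4)$, yields the desired bound. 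The genuinely new point is the complementary ``deep'' piece of $\supp\nabla\varphi$, where $|y-p|\lesssim d_\Omega(y)\approx\rho$ so the comparison with $\omega^p$ breaks down; there $g(\cdot,p)$, $|\nabla g(\cdot,p)|$, $|\nabla^2 g(\cdot,p)|$ are $\lesssim\rho^{1-n},\rho^{-n},\rho^{-n-1}$ by interior estimates, so by \eqref{eqgreen*} and integrating $g(x,\cdot)$ over $B(q_x,\rho/4)$ its contribution is $\lesssim\rho^{1-n}(d_\Omega(x)/\rho)^{\alpha}$; since this piece is nonempty only when $p$ lies within $O(\rho)$ of $q_x$ at depth $\approx\rho$, in which case $\omega^p(B(q_x,\rho))\gtrsim1$ by the corkscrew and Harnack inequalities, this contribution is absorbed into $\rho^{-n}\omega^p(B(q_x,\rho))(d_\Omega(x)/\rho)^{\alpha/2}$ (using $d_\Omega(x)\le\rho/8$ and $\rho=1$). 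Combining, we obtain \eqref{eqgrad2}.
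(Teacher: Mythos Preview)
Your proof is correct and follows essentially the same localization-of-singularities approach as the paper, which centers the cutoff $\varphi$ at $\xi=q_x$ with inner and outer radii $d_\Omega(p)/8$ and $d_\Omega(p)/4$, estimates $w_0$ exactly as in Case~1 of Lemma~\ref{lemtec1}, and then invokes the representation argument from the proof of Lemma~\ref{lemgradgreen1}. One simplification: your ``deep piece'' is in fact empty, since for $y\in\supp\nabla\varphi\subset B(q_x,\rho/4)$ one has $d_\Omega(y)\le|y-q_x|\le\rho/4$ while $|y-p|\ge|p-q_x|-|y-q_x|\ge\rho-\rho/4=3\rho/4\ge 3\,d_\Omega(y)$, so the comparison $g(y,p)\approx\omega^p(B(y,2d_\Omega(y)))\,d_\Omega(y)^{1-n}$ is valid on all of $\supp\nabla\varphi$ and no separate treatment is needed.
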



\begin{proof}
Let $\xi\in\partial\Omega$ and take a $C^\infty$ function $\vphi$ compactly supported in $B(\xi,d_\Omega(p)/4)$ which is
identically $1$ on $B(\xi,d_\Omega(p)/8)$, so that $|\nabla^j\vphi|\lesssim 1/d_\Omega(p)^j$ for $j=1,2$. Note that, in particular, $\vphi$ vanishes on $B(p,d_\Omega(p)/4)$. We consider the function
$$w_0(x) = \int_\Omega g(x,y)\,\Delta[\vphi\,\nabla g(\cdot,p)](y)\,dy\quad \mbox{ for $x\in\Omega$.}$$
We claim that
\begin{equation}\label{eqw01}
|w_0(x)|\lesssim  \frac{\omega(B(\xi,d_\Omega(p)/8))}{d_\Omega(p)^n}\,\left(\frac{d_\Omega(x)}{d_\Omega(p)}\right)^{\alpha/2}\quad\mbox{ if $|x-\xi|\leq \dfrac{d_\Omega(p)}4$.}
\end{equation}

The arguments to prove \rf{eqw01} are quite similar to the ones in Lemma \ref{lemtec1}.
By the  relationship between Green's function and harmonic measure and by standard estimates for positive harmonic functions, for all $y\in B(\xi,d_\Omega(p)/4)\cap \Omega$ we have
$$|\nabla g(y,p)|\lesssim \frac{g(y,p)}{d_\Omega(y)} \approx \frac{\omega^p(B(\xi,d_\Omega(p)/4))}{d_\Omega(y)^n}$$
and
$$|\nabla^2 g(y,p)|\lesssim \frac{g(y,p)}{d_\Omega(y)^2} \approx \frac{\omega^p(B(\xi,d_\Omega(p)/4))}{d_\Omega(y)^{n+1}}
.$$
Thus,
\begin{align*}
|w_0(x)| & = \left|\int_\Omega g(x,y)\,\bigl(\Delta\vphi(y)\,\nabla g (y,p) + 2\nabla \vphi(y)\cdot \nabla^2 g(y,p)\bigr)\,dy\right|\\
& \lesssim \int_{A(\xi,d_\Omega(p)/8,d_\Omega(p)/4)\cap\Omega} g(x,y)\left( 
\frac{\omega^p(B(\xi,d_\Omega(p)/4))}{d_\Omega(p)^2\,d_\Omega(y)^n} + \frac{\omega^p(B(\xi,d_\Omega(p)/4))}{d_\Omega(p)\,d_\Omega(y)^{n+1}}\right)dy\nonumber\\
& \lesssim \int_{B(\xi,d_\Omega(p)/4)\cap\Omega} g(x,y)\,\frac{\omega^p(B(\xi,d_\Omega(p)/4))}{d_\Omega(p)\,d_\Omega(y)^{n+1}}\,dy.\nonumber
\end{align*}
Notice that the integral on the right hand side above is very similar to the one on the right hand side of \rf{eqdjk28}. The reader can check that exactly the same arguments and estimates used to prove Lemma \ref{lemtec1} (a) yield \rf{eqw01}, 
with $\xi$ instead of $0$, $d_\Omega(p)/8$ instead of $R$, $\omega^p$ instead of $\omega$, and $g(y,p)$ instead of $u(y)$.
We leave the details for the reader.

From \rf{eqw01} it follows that $w_0\in C^{\alpha/2}(\overline \Omega)$ and it vanishes at $\partial\Omega$. Further, the function defined by
$$h_0(x)=  \vphi(x)\nabla g(x,p) - w_0(x), \quad\mbox{ $x\in\Omega$},$$
is harmonic in $\Omega$, because $\Delta w_0 = \vphi\,\nabla g(\cdot,p)$. Hence, arguing as in \rf{eqig31}, we derive
$$\vphi(x)\,\nabla g(x,p) = -\int \vphi(y)\,K_p(y)\,\vec{n}(y)\,d\omega^x(y) + w_0(x).$$
If $|x-\xi|\leq d_\Omega(p)/8$, then $\vphi(x) =1$ and from the last identity and the inequality \rf{eqw01}
with $\xi=q_x$, we deduce \rf{eqgrad2}.
\end{proof}

\vvv


\section{The pseudo-blow-up of harmonic measure is surface measure}\label{section6}

Let $\Omega \subset \R^{n+1}$ be a chord-arc domain. We recall that harmonic measure with either a finite pole $p \in \Omega$ or pole at infinity is in the $A_\infty(\sigma)$ class of weights by
\cite{DJ} or \cite{Semmes} and thus, the Poisson kernel $\frac{d \omega}{d\sigma}$ exists and is positive and finite. We denote by $u$ either the Green's function with pole at $p \in \Omega$ or with pole at infinity and $h$ the corresponding Poisson kernel (see \eqref{eq:Green-hminfty} for pole at infinity). 

\subsection{Pseudo-blow-ups of chord-arc domains}\label{sec:pbup}

Here we introduce the notion of {\it pseudo-blow-ups} from \cite{Kenig-Toro2}, but with a slight modification. Let $x_i \in \d \Omega$ and let $\{r_i\}_{i \geq 1}$ be a sequence of positive numbers so that $\lim_{i \to \infty} r_i = 0$. Consider now the domains 
$$ \Omega_i = \frac{1}{r_i} (\Omega -x_i),$$
so that $\d \Omega_i = \frac{1}{r_i} (\d \Omega -x_i)$, 
and the functions $u_i$ in $\Omega_i$ defined by 
$$ u_i (x) = \frac{g(r_i x + x_i,p_{i})}{r_i \,\omega^{p_{i}}(B(x_i,r_i))}\, \sigma(B(x_i, r_i)),$$
where either $p_{i}=\infty$ or $p_i\in \Omega\setminus \{x_i\}$ satisfies
$$\frac{p_{i}-x_i}{r_i}\to \infty \quad \mbox{ as $i\to\infty$.}$$
 Note that $u_i$ vanishes at $\d \Omega_i$ and is harmonic in  $\Omega_i \setminus \{\frac{p_{i}-x_i}{r_i}\}$. We denote by $d \omega_i = h_i \,d\sigma_i$ the harmonic measure of $\Omega_i$ with pole at infinity or $\frac{p_i-x_i}{r_i}$ depending on the pole of $u$, where $\sigma_i=\HH^{n}|_{\d \Omega_i}$. Moreover, the corresponding Poisson kernel\footnote{In fact, this is the Poisson kernel of $\Omega_i$ with pole at $p_i$ modulo a constant factor.} $h_i$ satisfies
$$h_i(x)= \frac{h(r_i x +x_i)}{\omega^{p_{i}}(B(x_i,r_i))}\, \sigma(B(x_i,r_i)).$$

\vv
\begin{theorem}[Theorem 4.1, \cite{Kenig-Toro2}]\label{thm:blowup}
If $\Omega \subset \R^{n+1}$ is a chord-arc domain, then there exists a subsequence satisfying
\begin{align*}
\Omega_i &\to  \Omega_\infty, \quad \textup{in the Hausdorff metric, uniformly on compact sets}\\
\partial \Omega_i &\to \partial \Omega_\infty, \quad \textup{in the Hausdorff metric, uniformly on compact sets},
\end{align*}
where $\Omega_\infty$ is a chord-arc domain. Moreover, there exists $u_\infty \in C(\overline \Omega_\infty)$ such that $u_i \to u_\infty$ uniformly on compact sets which satisfies  \eqref{eq:hminfty} for $\Omega=\Omega_\infty$. Furthermore, $\omega_i \to \omega_\infty$ weakly as Radon measures and $\omega_\infty$ is the harmonic measure of $\Omega_\infty$ with pole at infinity (corresponding to $u_\infty$). 
\end{theorem}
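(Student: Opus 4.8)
The plan is to follow the compactness scheme of Kenig and Toro (\cite[Theorem 4.1]{Kenig-Toro2}), separating the argument into a purely geometric part — compactness of the rescaled domains — and an analytic part — convergence of the rescaled Green functions and harmonic measures. The whole point is scale invariance: the NTA constants, the Harnack chain constant, and the AD-regularity constants of $\Omega$ are unchanged under translations and dilations, so every $\Omega_i=r_i^{-1}(\Omega-x_i)$ is chord-arc with the \emph{same} character as $\Omega$ and $0\in\partial\Omega_i$ for all $i$. First I would pass to a subsequence so that the closed sets $\overline{\Omega_i}$, $(\overline{\Omega_i})^c$ and $\partial\Omega_i$ each converge, uniformly on compact sets in the Hausdorff distance, to closed sets $F^+$, $F^-$, $\Sigma$; this is just sequential compactness of the family of closed subsets of $\R^{n+1}$ in the local Hausdorff topology, via a diagonal argument over the balls $B(0,R)$, $R\to\infty$. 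One then sets $\Omega_\infty:=\mathrm{int}\,F^+$.

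The next step is to verify that $\Omega_\infty$ is chord-arc. Interior and exterior corkscrew points of the balls $B(0,R)$ in $\Omega_i$ lie in fixed compact sets at definite distance from $\partial\Omega_i$, so along the subsequence they converge and yield interior and exterior corkscrew points for $\Omega_\infty$; likewise Harnack chains pass to the limit. Here the two-sided corkscrew hypothesis is essential: it is exactly what prevents $\Sigma$ from collapsing (so that $F^+$ or $F^-$ would lose interior), and it guarantees $\partial\Omega_\infty=\Sigma=\partial((\overline{\Omega_\infty})^c)$ with both $\Omega_\infty$ and its exterior NTA with the inherited constants. The two-sided bounds $C^{-1}r^n\le\sigma_i(B(x,r))\le Cr^n$ pass to the limit along the Hausdorff convergence $\partial\Omega_i\to\Sigma$, so $\Sigma$ is $n$-AD-regular and $\Omega_\infty$ is chord-arc. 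A further subsequence then yields $\sigma_i\to\sigma_\infty$ weakly as Radon measures, where $\sigma_\infty$ is an $n$-AD-regular measure on $\partial\Omega_\infty$; identifying it as $\sigma_\infty=\HH^n|_{\partial\Omega_\infty}$, rather than just a measure comparable to it, is one of the delicate points and uses the AD-regularity of the $\partial\Omega_i$ and of $\partial\Omega_\infty$ together with the Hausdorff convergence of supports.

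For the functions, note first that $u_i$ is positive and harmonic in $\Omega_i$ away from the rescaled pole $(p_i-x_i)/r_i$ (no pole at all if the pole of $u$ is at infinity), vanishes continuously on $\partial\Omega_i$, and that $(p_i-x_i)/r_i\to\infty$, so on any fixed $B(0,R)$ the function $u_i$ is genuinely harmonic in $\Omega_i\cap B(0,R)$ for $i$ large. By the CFMS estimate \eqref{e:wsimg} (or \eqref{e:wsimu} in the pole-at-infinity case) and $\sigma(B(x_i,r_i))\approx r_i^n$, the chosen normalization forces $u_i(\tilde A_i)\approx1$, where $\tilde A_i$ is the rescaled corkscrew point of $B(x_i,r_i)$, i.e.\ a corkscrew point for $B(0,1)$ in $\Omega_i$. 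Combining the interior Harnack inequality with the boundary estimates of Lemma \ref{lemholder} — all with constants depending only on the NTA character, hence uniform in $i$ — I would obtain, for each $R$, uniform bounds $\sup_{\overline{\Omega_i}\cap B(0,R)}u_i\lesssim_R 1$ together with a uniform H\"older modulus of continuity up to the boundary. Extending each $u_i$ by $0$ to $\R^{n+1}$, the family is locally uniformly bounded and equicontinuous, so by Arzel\`a--Ascoli and a diagonal argument a subsequence converges locally uniformly to some $u_\infty\in C(\R^{n+1})$; it is harmonic in $\Omega_\infty$ (a locally uniform limit of harmonic functions), nonnegative, and vanishes on $\partial\Omega_\infty$ and off $\overline{\Omega_\infty}$ since those points are limits of points of $\partial\Omega_i$ where $u_i=0$. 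Because $\tilde A_i\to\tilde A_\infty\in\Omega_\infty$ with $u_\infty(\tilde A_\infty)\approx1>0$, we get $u_\infty\not\equiv0$, hence $u_\infty>0$ throughout $\Omega_\infty$ by Harnack, so $u_\infty$ satisfies \eqref{eq:hminfty} for $\Omega_\infty$.

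Finally, a change of variables gives $\omega_i(B(0,1))=\sigma(B(x_i,r_i))/r_i^n\approx1$, so by the uniform doubling property of harmonic measure in NTA domains $\omega_i(B(0,R))\lesssim_R 1$ for all $i$; hence a further subsequence has $\omega_i\to\omega_\infty$ weakly as Radon measures, with $\supp\omega_\infty\subset\Sigma=\partial\Omega_\infty$ since $\supp\omega_i=\partial\Omega_i\to\partial\Omega_\infty$. Passing to the limit in the Green identity \eqref{eq:Green-hminfty}, which holds for each triple $(\Omega_i,u_i,\omega_i)$ up to a Dirac mass at $(p_i-x_i)/r_i$ in the finite-pole case — harmless for a fixed $\varphi\in C^\infty_c(\R^{n+1})$ once $i$ is large — and using $u_i\to u_\infty$ locally uniformly against the fixed $\Delta\varphi$, I would obtain $\int_{\Omega_\infty}u_\infty\,\Delta\varphi\,dm=\int\varphi\,d\omega_\infty$; by the uniqueness modulo constants of the Green function and harmonic measure with pole at infinity, $\omega_\infty$ is the harmonic measure of $\Omega_\infty$ with pole at infinity corresponding to $u_\infty$. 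The main obstacle is the geometric compactness step: showing that $\mathrm{int}\,F^+$ is a bona fide chord-arc domain with $\partial(\mathrm{int}\,F^+)=\Sigma$ — precisely where the two-sided corkscrew hypothesis does the work, excluding any collapse of the boundary — and pinning down the limit surface measure as $\HH^n|_{\partial\Omega_\infty}$ rather than a constant multiple of it; once that is secured, the convergence of $u_i$ and $\omega_i$ is a routine application of uniform elliptic estimates and weak compactness of measures.
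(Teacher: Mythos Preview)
Your sketch is correct and follows precisely the Kenig--Toro compactness scheme that the paper invokes; indeed the paper gives no independent proof here but simply cites \cite[Theorem 4.1]{Kenig-Toro2} and remarks that the same argument goes through when the base points $x_i$ are allowed to escape and the poles $p_i$ vary with $(p_i-x_i)/r_i\to\infty$. Two small points: you overstate slightly when you say the \emph{exterior} of $\Omega_\infty$ is NTA (only $\Omega_\infty$ itself is claimed to be chord-arc here; exterior Harnack chains are not assumed for $\Omega$), and your discussion of $\sigma_i\rightharpoonup\HH^n|_{\partial\Omega_\infty}$ is extraneous to this theorem --- in the paper that is the separate Lemma~\ref{lem:sigma}, whose proof actually requires Corollary~\ref{cor:hinfty1} (hence the $\textrm{VMO}$ hypothesis on $\log h$), so it should not be folded into the present statement.
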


This was originally shown in \cite{Kenig-Toro2} under the assumption that $p_{i}$ is a fixed point and $x_{i}$ converges to some point in $\d\Omega$. However, the same proof gives the result above.

\vv

\begin{theorem}
If $\Omega_\infty \subset \R^{n+1}$ and $u_\infty$ are as in Theorem \ref{thm:blowup}, then
\begin{equation}\label{eq:b-upLip}
\sup_{z \in \Omega_\infty} |\nabla u_\infty(z)| \leq 1.
\end{equation}
\end{theorem}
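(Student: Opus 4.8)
The plan is to transfer the uniform gradient bound from the approximating domains $\Omega_i$ to the limit $\Omega_\infty$ via the locally uniform convergence $u_i \to u_\infty$, using the gradient estimate for Green's functions established in Section \ref{section5}. The key point is that the rescaled Green's functions $u_i$ are, up to the normalizing constant, the Green's function of $\Omega_i$ with pole at $(p_i - x_i)/r_i$ or at infinity, and the hypothesis that $\log h \in \VMO(\sigma)$ is scale-invariant in the appropriate sense, so that $\log h_i \in \VMO(\sigma_i)$ with VMO modulus controlled uniformly in $i$. Thus Lemma \ref{lemgradgreen1} (in the pole-at-infinity case) or Lemma \ref{lemgradgreen2} (in the finite-pole case) applies to each $\Omega_i$.

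First I would fix $z \in \Omega_\infty$ and a radius $\rho$ with $\overline{B(z,\rho)} \subset \Omega_\infty$. Since $\Omega_i \to \Omega_\infty$ in the Hausdorff metric uniformly on compact sets, for $i$ large we have $\overline{B(z,\rho)} \subset \Omega_i$ and $z$ is far from $\partial\Omega_i$; moreover the pole point $(p_i - x_i)/r_i$ tends to infinity (or, in the infinite-pole case, there is no finite pole), so for $i$ large $z$ lies deep inside $\Omega_i$ away from any pole. Applying Lemma \ref{lemgradgreen1} to $\Omega_i$ and the Green's function $u_i$ (after checking that $u_i$ is precisely the Green's function with pole at infinity associated to the harmonic measure $\omega_i$, up to the stated normalization — which is exactly how $u_i$ and $\omega_i$ were defined in Section \ref{sec:pbup}), we get
\begin{equation}\label{eq:ui-bound}
|\nabla u_i(z)| \leq \int_{\partial\Omega_i} \frac{d\omega_i}{d\sigma_i}(y)\,d\omega_i^z(y) = \int_{\partial\Omega_i} h_i\, d\omega_i^z.
\end{equation}
In the finite-pole case one uses Lemma \ref{lemgradgreen2} instead; the extra error term there is $C\,\frac{\omega^{p_i}(B(q,d_{\Omega_i}(p_i)))}{d_{\Omega_i}(p_i)^n}(d_{\Omega_i}(z)/d_{\Omega_i}(p_i))^{\alpha/2}$ in the rescaled picture, and since $d_{\Omega_i}(p_i) = |p_i - x_i|/r_i \to \infty$ while $d_{\Omega_i}(z)$ stays bounded, and the normalization makes the prefactor comparable to $\avint h_i \approx 1$ on the relevant ball, this error term tends to $0$ as $i \to \infty$.

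Next I would show the right-hand side of \eqref{eq:ui-bound} equals $u_i(z)$, or is at most $u_i(z)$, by recognizing that $\int h_i\, d\omega_i^z = \int \frac{d\omega_i}{d\sigma_i}\, d\omega_i^z$ and that $u_i$ itself is represented by harmonic measure against its own boundary data — more precisely, $u_i$ solves the problem \eqref{eq:hminfty} and, by the construction via \eqref{eq:Green-hminfty} together with the Riesz-transform/jump-relation identity, one has that $\nabla u_i$ has nontangential boundary limit $-h_i \vec n_i$ (Lemma \ref{lemnontang}), so that $\int h_i\, d\omega_i^z = \int |{-}\lim \nabla u_i|\, d\omega_i^z$; but the harmonic function whose nontangential limit is $h_i$ is $\partial_{\nu}$-type data whose Poisson extension is dominated, via the maximum principle for the components of $\nabla u_i$ (each $\partial_k u_i$ is harmonic), precisely by $u_i$. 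Concretely, $|\nabla u_i(z)| \le \int h_i\, d\omega_i^z$ and the latter is the harmonic extension of the boundary function $h_i = |\nabla u_i|$ on $\partial\Omega_i$; applying Lemma \ref{lemnontang} to identify boundary values and then the fact (from the Kenig--Toro construction, or a direct argument using that $u_i$ is $1$-homogeneous-like) that $u_i(z) = \int h_i \, d\omega_i^z$ holds when $h_i$ is the Poisson kernel normalized as above gives $|\nabla u_i(z)| \le u_i(z)/c$ for a uniform constant; in the pole-at-infinity blow-up the correct normalization makes this constant $1$. Then letting $i \to \infty$, using $u_i(z) \to u_\infty(z)$ and the fact that $u_\infty$ is itself a Green's function with pole at infinity for $\Omega_\infty$ whose Poisson kernel one must still control, and iterating, one closes the loop: $|\nabla u_\infty(z)| \le u_\infty(z)$ would not immediately give the bound $1$, so the cleaner route is to pass to the limit directly in $|\nabla u_i(z)| \le 1$ once one establishes the normalized inequality $\int h_i\, d\omega_i^z \le 1$, which holds because $\avint_{B} h_i\,d\sigma_i \approx 1$ on unit balls by the normalization of $u_i$ and Corollary \ref{corovmo0}, and these averages propagate through the harmonic-measure integral.

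The main obstacle, I expect, is the bookkeeping of the normalization constants: one must verify that with the chosen scaling of $u_i$ and $\omega_i$ the quantity $\int_{\partial\Omega_i} h_i\, d\omega_i^z$ is genuinely bounded by $1$ (not merely by a constant), which requires combining the defining normalization $u_i$ carries $\omega_i(B(0,1)) \approx \sigma_i(B(0,1)) \approx 1$ with the VMO-reverse-Hölder consequences (Lemma \ref{lemvmo}, Corollary \ref{corovmo0}, Corollary \ref{corovmo}) to see that $h_i$ has averages comparable to $1$ uniformly, and then passing this through the representation. A secondary technical point is justifying the interchange of limits (convergence of $\nabla u_i(z) \to \nabla u_\infty(z)$, which follows from locally uniform convergence of harmonic functions $u_i \to u_\infty$ away from $\partial\Omega_\infty$ and interior derivative estimates) and handling the error term in the finite-pole case, both of which are routine given the setup. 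Once the normalized bound on the approximants is in hand, the conclusion \eqref{eq:b-upLip} follows by taking $i \to \infty$ and then letting $\rho$ be arbitrary.
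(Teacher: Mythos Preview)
Your overall strategy --- apply the gradient estimate of Lemma \ref{lemgradgreen1} (or \ref{lemgradgreen2}) to each rescaled $u_i$ and pass to the limit via locally uniform convergence of harmonic functions --- is exactly the approach the paper takes (by citing \cite[Theorem 4.2]{Kenig-Toro2} and noting the argument survives without Reifenberg flatness). The reduction to bounding $\int_{\partial\Omega_i} h_i\,d\omega_i^z$ and the handling of the finite-pole error term are both correct.

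However, the middle paragraph, where you try to bound that integral by $1$, contains a genuine gap and some false statements. The claim that $\int h_i\,d\omega_i^z$ equals or is dominated by $u_i(z)$ is simply wrong: $\int h_i\,d\omega_i^z$ is the harmonic extension of the \emph{boundary values} $h_i=|\nabla u_i|$, while $u_i$ is the Green function itself; there is no direct inequality between them, and the sentence about ``iterating, one closes the loop'' leads nowhere. Your final remark that ``$\avint_B h_i\,d\sigma_i\approx 1$ and these averages propagate through the harmonic-measure integral'' is the right intuition, but it is precisely the substance of the proof and must be made quantitative.

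What is actually needed is this: the VMO hypothesis on $\log h$ enters here and nowhere else. Since $r_i\to 0$, for any fixed $R$ the balls $B(x_i, Rr_i)$ in $\partial\Omega$ have radius tending to $0$, so the VMO modulus of $\log h$ on those balls tends to $0$. Rescaled, this means $h_i$ is close to its average in $L^p_{loc}(\sigma_i)$ for every $p$ (via John--Nirenberg or Lemma \ref{lemvmo}), and the normalization forces $(h_i)_{B(0,1)}=1$ exactly; chaining via doubling gives $(h_i)_{B(0,R)}\to 1$ for each fixed $R$. For the tail $\int_{|y|>R} h_i\,d\omega_i^z$, decompose into annuli $A_k$ and use H\"older's inequality together with the reverse-H\"older bounds of Corollaries \ref{corovmo0} and \ref{corovmo} on both $h_i$ and $K^z_i=d\omega_i^z/d\sigma_i$ to get $\int_{A_k} h_i\,d\omega_i^z \lesssim \omega_i^z(A_k)\,(h_i)_{A_k}$; the first factor decays geometrically in $k$ (H\"older decay of harmonic measure in NTA domains, Lemma \ref{lemholder}) and the second grows at most like $(2^kR)^{\varepsilon}$ for arbitrarily small $\varepsilon$ (Corollary \ref{corovmo0}), so the sum is $o(1)$ as $R\to\infty$. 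Putting the pieces together gives $\int h_i\,d\omega_i^z \to 1$, hence $\limsup_i |\nabla u_i(z)|\le 1$.
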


\vv

\begin{theorem}
If $\Omega_\infty \subset \R^{n+1}$, $u_\infty$ and $\omega_\infty$ are as in Theorem \ref{thm:blowup}, then
\begin{equation}\label{eq:hinftygeq1}
\frac{d\omega_\infty}{d\sigma_\infty} \geq 1, \quad \HH^{n} \textup{-a.e. on}\,\, \d \Omega_\infty,
\end{equation}
where $\sigma_\infty = \HH^{n}|_{\d \Omega_\infty}$.
\end{theorem}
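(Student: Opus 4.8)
The plan is to derive \eqref{eq:hinftygeq1} from the scale invariant lower bound
\[
\omega_\infty\bigl(\overline{B(x_0,\rho)}\bigr)\ \ge\ \sigma_\infty\bigl(B(x_0,\rho)\bigr)\qquad\text{for all }x_0\in\partial\Omega_\infty,\ \rho>0 .
\]
Once this is known, writing $\omega_\infty=\tfrac{d\omega_\infty}{d\sigma_\infty}\,\sigma_\infty+\omega_\infty^{s}$ with $\omega_\infty^{s}\perp\sigma_\infty$ and invoking the Lebesgue differentiation theorem for the doubling measure $\sigma_\infty=\HH^{n}|_{\partial\Omega_\infty}$ finishes the proof: for $\sigma_\infty$-a.e.\ $x$ one has $\omega_\infty^{s}(B(x,\rho))/\sigma_\infty(B(x,\rho))\to 0$ as $\rho\to0$, and selecting radii $\rho_k\downarrow0$ that avoid the at most countably many spheres $\partial B(x,\rho)$ carrying positive $\omega_\infty$-mass, the displayed inequality yields $\tfrac{d\omega_\infty}{d\sigma_\infty}(x)=\lim_k\omega_\infty(\overline{B(x,\rho_k)})/\sigma_\infty(B(x,\rho_k))\ge 1$.

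To establish the displayed bound, fix $x_0\in\partial\Omega_\infty$, $\rho>0$ and $0<\rho'<\rho$, and choose $x_i'\in\partial\Omega_i$ with $x_i'\to x_0$, which is possible by the Hausdorff convergence $\partial\Omega_i\to\partial\Omega_\infty$ of Theorem \ref{thm:blowup}. For $i$ large, $B(x_i',\rho')\subset\overline{B(x_0,\rho)}$, so using the weak convergences $\omega_i\to\omega_\infty$ and $\sigma_i\to\sigma_\infty=\HH^{n}|_{\partial\Omega_\infty}$ as Radon measures (both furnished by the blow-up construction; see \cite{Kenig-Toro2}) one gets, for every $\rho''<\rho'$,
\[
\omega_\infty\bigl(\overline{B(x_0,\rho)}\bigr)\ \ge\ \limsup_{i\to\infty}\omega_i\bigl(B(x_i',\rho')\bigr),\qquad
\liminf_{i\to\infty}\sigma_i\bigl(B(x_i',\rho')\bigr)\ \ge\ \sigma_\infty\bigl(B(x_0,\rho'')\bigr).
\]
Since $\sigma_i(B(x_i',\rho'))\approx(\rho')^{n}$ uniformly in $i$ by the AD-regularity of the $\Omega_i$, it suffices to prove that
\[
\frac{\omega_i\bigl(B(x_i',\rho')\bigr)}{\sigma_i\bigl(B(x_i',\rho')\bigr)}\ \longrightarrow\ 1\qquad\text{as }i\to\infty ;
\]
the displayed bound then follows on letting $\rho''\uparrow\rho'\uparrow\rho$.

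For this last limit I would unwind the normalisations of the pseudo-blow-up. Recall that we assume $\log h\in\VMO(\sigma)$, and take $p_i$ to be the fixed pole of $\Omega$, so that $h=\tfrac{d\omega^{p_i}}{d\sigma}$ is exactly the Poisson kernel of the hypothesis (a genuinely varying $p_i$ reduces to this case via the change-of-pole estimate \eqref{e:wratio}). Put $y_i:=x_i+r_ix_i'\in\partial\Omega$, $B_i:=B(x_i,r_i)$ and $B_i':=B(y_i,\rho'r_i)$. The definitions of $h_i$, $\sigma_i$ and $\omega_i$ give
\[
\frac{\omega_i(B(x_i',\rho'))}{\sigma_i(B(x_i',\rho'))}=\frac{\sigma(B_i)}{\omega^{p_i}(B_i)}\cdot\frac{\omega^{p_i}(B_i')}{\sigma(B_i')} ,
\]
so, writing $G(B):=\log\bigl(\omega^{p_i}(B)/\sigma(B)\bigr)-\avint_B\log h\,d\sigma$ for a ball $B$ centred on $\partial\Omega$ (hence $G(B)\ge0$ by Jensen's inequality) and taking logarithms,
\[
\log\frac{\omega_i(B(x_i',\rho'))}{\sigma_i(B(x_i',\rho'))}=\Bigl(\avint_{B_i'}\log h\,d\sigma-\avint_{B_i}\log h\,d\sigma\Bigr)+G(B_i')-G(B_i).
\]
Since $|y_i-x_i|\lesssim_{x_0}r_i$ and $\rho'$ is fixed, both $B_i$ and $B_i'$ lie inside $B(x_i,Cr_i)$ with $C=C(x_0,\rho')$, so by AD-regularity the first parenthesis is at most $C(x_0,\rho')\sup_{x\in\partial\Omega}\avint_{B(x,Cr_i)}\bigl|\log h-\avint_{B(x,Cr_i)}\log h\,d\sigma\bigr|\,d\sigma$, which tends to $0$ as $r_i\to0$ because $\log h\in\VMO(\sigma)$. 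Likewise $G(B)\to0$ uniformly over balls $B$ centred on $\partial\Omega$ of radius tending to $0$: the $\mathrm{BMO}$ oscillation of $\log h$ at scales $\le s$ tends to $0$ as $s\to0$, whence, by the John--Nirenberg inequality, $\log\avint_B h\,d\sigma-\avint_B\log h\,d\sigma\to0$ uniformly over such $B$ (the quantitative Jensen inequality underlying Lemma \ref{lemvmo}). Hence all three terms tend to $0$ and the ratio tends to $1$, as required.

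The main, and only genuinely delicate, point is this VMO input. The $A_\infty$/reverse-Hölder bounds of Corollary \ref{corovmo0} only confine the above ratio between two fixed constants, which is not enough; what matters is the \emph{vanishing}, as the scale shrinks, of the Jensen gaps $G(B)$ and of the difference of logarithmic averages of $h$ over nearby balls of comparable radius, and this is precisely what $\log h\in\VMO(\sigma)$ supplies (note that this argument in fact gives the ratio above converging to $1$, which together with the previous theorem forces $\tfrac{d\omega_\infty}{d\sigma_\infty}=1$). The remaining ingredients---the change of variables, the passage to the limit through weak convergence, and the concluding differentiation argument---are routine.
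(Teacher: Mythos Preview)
Your argument has a circularity. You invoke the weak convergence $\sigma_i\to\sigma_\infty$, but in the paper's logical structure this is Lemma~\ref{lem:sigma}, whose proof (and the corresponding Theorem~4.4 in \cite{Kenig-Toro2}) relies on Corollary~\ref{cor:hinfty1}, which in turn uses the very inequality \eqref{eq:hinftygeq1} you are trying to prove. So you cannot quote $\sigma_i\to\sigma_\infty$ here. What your argument actually needs is only the one-sided bound $\liminf_i\sigma_i(U)\geq\sigma_\infty(U)$ for open $U$, and this \emph{can} be obtained independently: since $\chi_{\Omega_i}\to\chi_{\Omega_\infty}$ in $L^1_{\loc}$ (a consequence of the Hausdorff convergence of the domains and the uniform AD-regularity of the boundaries), the lower semicontinuity of perimeter gives $\sigma_\infty(U)=P(\Omega_\infty,U)\leq\liminf_iP(\Omega_i,U)=\liminf_i\sigma_i(U)$, using that for chord-arc domains the perimeter measure and the surface measure coincide. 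With this repair, your strategy matches the one in \cite{Kenig-Toro2}.

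A second, smaller gap: your reduction of the case of a genuinely varying pole $p_i$ to a fixed pole via \eqref{e:wratio} does not work as stated. The change-of-pole comparison only gives $\omega^{p_i}(E)/\omega^{p_i}(B)\approx\omega^{p}(E)/\omega^{p}(B)$ up to multiplicative constants depending on the NTA character---precisely the ``two fixed constants'' situation you correctly flag as insufficient. It does not yield that the ratio tends to $1$. For the application in Section~9 one takes $p_i$ equal to the fixed pole of the hypothesis, so this is harmless there, but the theorem as stated allows varying $p_i$, and this case needs a separate argument.
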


Both theorems were proved in \cite[Theorems 4.2 and 4.3]{Kenig-Toro2} for Reifenberg flat domains with $n$-AD regular boundary, although, an inspection of the proofs shows that the same  arguments, with very minor changes, work also for NTA domains with $n$-AD regular boundary, i.e., for chord-arc domains.

\vv
\begin{coro}\label{cor:hinfty1}
If $\Omega_\infty \subset \R^{n+1}$, $u_\infty$ and $\omega_\infty$ are as in Theorem \ref{thm:blowup}, then
\begin{equation}\label{eq:hinfty1}
|\nabla u_\infty|=\frac{d\omega_\infty}{d\sigma_\infty}=1\quad\mbox{ $\HH^{n}$-a.e. on $\partial\Omega_\infty$.}
\end{equation}
\end{coro}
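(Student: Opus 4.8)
The plan is to squeeze both quantities between $1$ and $1$. The pointwise bound $|\nabla u_\infty|\le 1$ in $\Omega_\infty$ is given by \eqref{eq:b-upLip}, and the lower bound $\frac{d\omega_\infty}{d\sigma_\infty}\ge 1$ $\sigma_\infty$-a.e.\ by \eqref{eq:hinftygeq1}; what ties the two together is the non-tangential boundary value of $\nabla u_\infty$, which I would extract from Proposition \ref{proponontang} applied to $\Omega_\infty$ (with $u_\infty,\omega_\infty$ in place of $u,\omega$ and with pole at infinity). The only preparatory work is to verify the two hypotheses of that proposition for $\Omega_\infty$: the growth estimate \eqref{eqgrow'} for $\omega_\infty$, and $h_\infty:=\frac{d\omega_\infty}{d\sigma_\infty}\in L^p_{loc}(\sigma_\infty)$ for some $p>1$.

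To get \eqref{eqgrow'}, note first that by \eqref{eq:b-upLip} the function $u_\infty$ is $1$-Lipschitz on every ball contained in $\Omega_\infty$; since $u_\infty\in C(\overline{\Omega_\infty})$ vanishes on $\partial\Omega_\infty$, letting $w$ run along the segment joining a point $z\in\Omega_\infty$ to a closest point of $\partial\Omega_\infty$ gives $u_\infty(z)\le d_{\Omega_\infty}(z)$ for all $z\in\Omega_\infty$. Plugging this into the standard comparison \eqref{e:wsimu} between harmonic measure and the Green function with pole at infinity — with $x_B$ a corkscrew point for $B=B(x,r)$, $x\in\partial\Omega_\infty$ — yields
\[
\omega_\infty(B(x,r))\approx r^{n-1}\,u_\infty(x_B)\lesssim r^{n-1}\cdot r=r^{n},
\]
so \eqref{eqgrow'} holds, say with $\delta=\tfrac12$. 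For the integrability, $\Omega_\infty$ is chord-arc by Theorem \ref{thm:blowup}, so $\omega_\infty\in A_\infty(\sigma_\infty)$ as recalled at the beginning of this section; hence $h_\infty$ satisfies a local reverse Hölder inequality and in particular belongs to $L^p_{loc}(\sigma_\infty)$ for some $p>1$.

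With both hypotheses checked, Proposition \ref{proponontang} gives, for $\sigma_\infty$-a.e.\ $x\in\partial\Omega_\infty$,
\[
\lim_{\Gamma^+(x)\ni z\to x}\nabla u_\infty(z)=-h_\infty(x)\,\vec n(x),
\qquad\text{hence}\qquad
\lim_{\Gamma^+(x)\ni z\to x}|\nabla u_\infty(z)|=h_\infty(x).
\]
Since $|\nabla u_\infty(z)|\le 1$ for every $z\in\Omega_\infty$ by \eqref{eq:b-upLip}, this forces $h_\infty(x)\le 1$; together with $h_\infty(x)\ge 1$ from \eqref{eq:hinftygeq1} we conclude $h_\infty(x)=1$ for $\sigma_\infty$-a.e.\ $x$, and then the limit above also shows that the non-tangential boundary values of $|\nabla u_\infty|$ equal $1$ $\sigma_\infty$-a.e., which is \eqref{eq:hinfty1}. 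I expect the only genuinely non-routine point to be the verification that Proposition \ref{proponontang} applies — that is, deducing $u_\infty(z)\le d_{\Omega_\infty}(z)$ from \eqref{eq:b-upLip} and continuity up to the boundary, and reading off local $L^p$-integrability of $h_\infty$ from the $A_\infty$ property; once these are in place the corollary follows immediately from the squeeze $1\le h_\infty(x)=\lim_{\Gamma^+(x)\ni z\to x}|\nabla u_\infty(z)|\le 1$.
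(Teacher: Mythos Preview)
Your proposal is correct and follows essentially the same approach as the paper: combine the non-tangential identity \eqref{eq:n.t.limit} with the gradient bound \eqref{eq:b-upLip} to get $h_\infty\le 1$, then use \eqref{eq:hinftygeq1} for the reverse inequality. The paper's proof is just two sentences citing these three ingredients; you have additionally spelled out why Proposition~\ref{proponontang} actually applies to $\Omega_\infty$ (the growth bound via $u_\infty(z)\le d_{\Omega_\infty}(z)$, noting that $r^n\le r^{n+1/2}$ for $r\ge 1$, and the $L^p_{loc}$ integrability of $h_\infty$), which the paper leaves implicit.
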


\begin{proof}
Combining \eqref{eq:n.t.limit} and \eqref{eq:b-upLip} we get that $\frac{d\omega_\infty}{d\sigma_\infty} \leq 1$ for $\HH^{n}$-a.e. on $\partial\Omega_\infty$. Then \eqref{eq:hinfty1} follows from \eqref{eq:hinftygeq1}.
\end{proof}

\vv
\begin{lemma}
The subsequence introduced in Theorem \ref{thm:blowup} satisfies $\sigma_i \rightharpoonup \sigma_\infty$ weakly as Radon measures.
\label{lem:sigma}
\end{lemma}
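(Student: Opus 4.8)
I will prove that every weak-$*$ subsequential limit of $\{\sigma_i\}$ equals $\sigma_\infty$; since the $\sigma_i$ are uniformly $n$-AD-regular (the AD-constant is invariant under translation and dilation), the masses $\sigma_i(B(0,\rho))$ are bounded for each fixed $\rho$, so $\{\sigma_i\}$ is precompact in the weak-$*$ topology of Radon measures and this suffices. So let $\mu$ be such a limit, $\sigma_{i_k}\rightharpoonup\mu$. A standard argument using the Hausdorff convergence $\d\Omega_{i}\to\d\Omega_\infty$ together with the uniform AD-regularity shows that $\mu$ is $n$-AD-regular with $\supp\mu=\d\Omega_\infty$; in particular $\mu$ and $\omega_\infty$ are Radon, so $\mu(\d B(0,\rho))=\omega_\infty(\d B(0,\rho))=0$ for all but countably many $\rho>0$.

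\medskip\noindent\textbf{Step 1: $\mu\ge\sigma_\infty$ as measures.} From the Hausdorff convergence of $\Omega_i$ (and of the exteriors) uniformly on compact sets, together with $|\d\Omega_i|=0$ and the connectedness (NTA) of the domains, one gets $\chi_{\Omega_i}\to\chi_{\Omega_\infty}$ in $L^1_{loc}(\R^{n+1})$ (for $x$ away from $\d\Omega_\infty$, a ball around $x$ eventually avoids $\d\Omega_i$ and, being connected, lies entirely in $\Omega_i$ or in its exterior according to where $x$ lies). Since $\Omega_i$ is chord-arc we have $\sigma(\d\Omega_i\setminus\d_*\Omega_i)=0$, so $\sigma_i=\HH^n|_{\d\Omega_i}=|D\chi_{\Omega_i}|$, and similarly $\sigma_\infty=|D\chi_{\Omega_\infty}|$. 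By the lower semicontinuity of the total variation under $L^1_{loc}$ convergence, $\sigma_{i_k}=|D\chi_{\Omega_{i_k}}|\rightharpoonup\mu$ forces $\sigma_\infty=|D\chi_{\Omega_\infty}|\le\mu$.

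\medskip\noindent\textbf{Step 2: matching masses on balls centered at the origin.} Recall $0\in\d\Omega_i$ for every $i$, and let $h=\frac{d\omega^\star}{d\sigma}$ be the Poisson kernel of $\Omega$ for the relevant pole (finite or infinite). A direct change of variables in the definitions of $\sigma_i$ and of $\omega_i=h_i\sigma_i$ gives, for every $\rho>0$,
\[
\frac{\omega_i(B(0,\rho))}{\sigma_i(B(0,\rho))}
=\frac{\displaystyle\avint_{B(x_i,\rho r_i)}h\,d\sigma}{\displaystyle\avint_{B(x_i,r_i)}h\,d\sigma}.
\]
Now I use that $\log h\in\textrm{VMO}(\sigma)$. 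Together with Lemma \ref{lemvmo} (applied at the scale $s\to0$) one checks that the \emph{normalized} mean oscillation of $h$ vanishes at small scales, i.e.
\[
\sup_{x\in\d\Omega}\ \frac{1}{\avint_{B(x,s)}h\,d\sigma}\avint_{B(x,s)}\Big|h-\avint_{B(x,s)}h\,d\sigma\Big|\,d\sigma\ \xrightarrow[s\to0]{}\ 0,
\]
and hence, using also AD-regularity to compare $B(x_i,r_i)$ with $B(x_i,\rho r_i)$, the right-hand side of the displayed identity tends to $1$ as $i\to\infty$, for every fixed $\rho>0$.

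\medskip\noindent\textbf{Step 3: conclusion.} Fix $\rho>0$ with $\mu(\d B(0,\rho))=\omega_\infty(\d B(0,\rho))=0$. Along the subsequence, $\sigma_{i_k}(B(0,\rho))\to\mu(B(0,\rho))$ and $\omega_{i_k}(B(0,\rho))\to\omega_\infty(B(0,\rho))$, so Step 2 gives $\mu(B(0,\rho))=\omega_\infty(B(0,\rho))$, and by Corollary \ref{cor:hinfty1} (which yields $\omega_\infty=\sigma_\infty$) we get $\mu(B(0,\rho))=\sigma_\infty(B(0,\rho))$. Combining this equality of masses with the inequality $\mu\ge\sigma_\infty$ from Step 1: for any Borel $A\subseteq B(0,\rho)$, both $\mu(A)\ge\sigma_\infty(A)$ and $\mu(B(0,\rho)\setminus A)\ge\sigma_\infty(B(0,\rho)\setminus A)$, and adding these and using $\mu(B(0,\rho))=\sigma_\infty(B(0,\rho))$ forces equality throughout; thus $\mu|_{B(0,\rho)}=\sigma_\infty|_{B(0,\rho)}$. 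Letting $\rho\to\infty$ through the admissible values gives $\mu=\sigma_\infty$, so $\sigma_i\rightharpoonup\sigma_\infty$.

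\medskip The main obstacle is Step 2: one must control the ratio of the averages of the Poisson kernel at two comparable (small) scales $r_i$ and $\rho r_i$, and show it tends to $1$. This is where $\log h\in\textrm{VMO}(\sigma)$ is used quantitatively — the reverse Hölder inequalities of Lemma \ref{lemvmo} bound the second moment of $h$ by the square of its mean uniformly in scale, and combining this with the vanishing of the mean oscillation of $\log h$ at small scales upgrades the $A_\infty$-type comparison (Corollary \ref{corovmo0}), whose constants do not approach $1$, to the sharp statement that $\avint_{B(x,s)}h\,d\sigma$ varies slowly in $s$ as $s\to0$. Step 1 is routine once the $L^1_{loc}$ convergence $\chi_{\Omega_i}\to\chi_{\Omega_\infty}$ is recorded from the Hausdorff convergence of the domains.
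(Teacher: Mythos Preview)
Your approach is essentially the same as the one the paper references (Theorem 4.4 of \cite{Kenig-Toro2}, with Corollary \ref{cor:hinfty1} replacing the Reifenberg-flat input): lower semicontinuity of the perimeter gives $\sigma_\infty\le\mu$, the $\textrm{VMO}$ hypothesis on $\log h$ forces $\omega_i(B)/\sigma_i(B)\to 1$ so that $\mu(B)=\omega_\infty(B)$, and then $\omega_\infty=\sigma_\infty$ from Corollary \ref{cor:hinfty1} closes the argument. One small remark on Step 2: the clean way to get $\avint_{B(x,\rho s)}h\big/\avint_{B(x,s)}h\to 1$ is to write $h=e^{f}$ with $f=\log h\in\textrm{VMO}(\sigma)$, use John--Nirenberg to bound $\avint_B e^{2|f-f_B|}$ uniformly, and then estimate $\bigl|\avint_B e^{f-f_B}-1\bigr|\lesssim\bigl(\avint_B|f-f_B|^2\bigr)^{1/2}$ together with $|f_{B'}-f_B|\lesssim_\rho\bigl(\avint_B|f-f_B|^2\bigr)^{1/2}$; Lemma \ref{lemvmo} alone (reverse H\"older) does not yield the vanishing you need.
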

\begin{proof}
This was essentially proved in Theorem 4.4 in \cite{Kenig-Toro2}. The only difference is that instead of invoking \cite[Theorem 2]{Kenig-Toro2} in the proof, which is particular to the Reifenberg flat case, we just use Corollary \ref{cor:hinfty1}. 
\end{proof}

\vv
\subsection{Blow-downs of unbounded chord-arc domains}\label{sec:bdown}
\def\blowd{\widetilde\Omega}
\def\ublowd{\tilde{u}}
In the course of proving our main result we will need to construct the {\it blow-down} domain with respect to a fixed point $x_0 \in \d \Omega$ of an unbounded chord-arc domain $\Omega$ such that $\frac{d \omega}{d\sigma}=1$  $\sigma$-a.e.\ on $\d \Omega$ (i.e., $\omega=\sigma$). To do so, we let $x_i=x_0$ for all $i\geq 1$ and a sequence of positive numbers $r_i$ such that $\lim_{i \to \infty} r_i= \infty $. Now we take  $\Omega_i$ and $u_i$ as in the construction of pseudo-blow-ups in Subsection \ref{sec:pbup} and 
$p=p_i=\infty$. Then
similar (but easier) arguments show that there exists a chord-arc domain $\blowd$ such that
\begin{align*}
\Omega_i &\to   \blowd, \quad \textup{in the Hausdorff metric, uniformly on compact sets, and}\\
\partial \Omega_i &\to \partial  \blowd, \quad \textup{in the Hausdorff metric, uniformly on compact sets}.
\end{align*}
Moreover, there exists $ \ublowd \in C\bigl(\overline {\blowd}\bigr)$ such that $u_i \to  u_0$ uniformly on compact sets which satisfies 
\begin{equation*}
\left\{ \begin{array}{ll} \Delta  \ublowd = 0 & \mbox{ in $\blowd$,}\\
\ublowd> 0 & \mbox{ in $\blowd$,}\\
 \ublowd=0  & \mbox{ in $\partial \blowd$.}
\end{array}
\right.
\end{equation*}.

\vvv


\section{Application of the monotonicity formula of Weiss: blow-downs are planes in $\R^3$}

We first introduce the notion of variational solution of the one-phase free boundary problem in an open ball $B \subset \R^{n+1}$,

\begin{equation}\label{eq:var.sol}
\left\{ \begin{array}{ll} 
u \geq 0  & \mbox{ in $B$,}\\
\Delta u = 0 & \mbox{ in $B^+(u):=B \cap \{u>0\} $,}\\
|\nabla u| = 1  & \mbox{ on $F(u):= \d B^+(u) \cap B$.}\\
\end{array}
\right.
\end{equation}

\begin{definition}\label{def:vsol}
We define $u \in W^{1,2}_{loc}(B )$ to be a {\it variational solution} of \eqref{eq:var.sol} if 
\begin{enumerate}
\item $u \in C(B ) \cap C^2 \left(B^+(u)\right)$,
\item $\chi_{\{u>0\}} \in L^1_{loc}(B )$ and 
\item the first variation with respect to the functional 
\begin{equation} \label{eq:var.functional}
F(v):= \int_{B}  \left( |\nabla v|^2 + \chi_{\{v>0\}}  \right)\,dm
\end{equation}
vanishes at $v=u$, i.e., 
\begin{align}\label{eq:1st-var}
0 = - \frac{d}{d \epsilon}  F(u(x+\epsilon \phi(x)))|_{\epsilon=0}= \int_{B } \bigl[( |\nabla u|^2 +\chi_{\{u>0\}})  \dv\phi - 2 \nabla u\, D \phi \,(\nabla u)^T\bigr]\,dm, 
\end{align}
for any $\phi \in C^\infty_c(B ; \R^{n+1})$.
\end{enumerate}
\end{definition}

\begin{definition}\label{def:wsol}
We say that $u$ is a {\it weak solution} of $\Delta u = \HH^{n}( \d \{u >0\} \cap \cdot)$ in $B$ if the following are satisfied:
\begin{enumerate}
\item $u \in W^{1,2}_{loc}(B) \cap C(B^+(u))$, $u \geq 0$ in $B$ and $u$ is harmonic in the open set $\{u >0\}$.
\item {\it Non-degeneracy and regularity}: for any open $D \Subset B$ there exist $0<c_D \leq C_D <\infty$ such that for any $B(x,r) \subset D$ satisfying $x \in \d \{ u>0\}$ we have
\begin{equation}\label{eq:wsol-avrg}
c_D \leq r^{-n-1} \int_{\d B(x,r) } u \,d\HH^{n} \leq C_D.
\end{equation}
\item $\{u>0\}$ is locally in $B$  a set of finite perimeter and
\begin{equation}\label{eq:wsol-green}
-\int \nabla u \cdot \nabla \zeta \,dm=  \int_{\d^*\{u>0\}} \zeta \,d\HH^{n}, 
\end{equation}
for any $\zeta \in C^\infty_c(B)$, where $\d^*\{u>0\} $ stands for the reduced boundary of $\{u>0\}$.
\end{enumerate}
\end{definition}

Let us now record a useful lemma whose proof is contained in the one of  \cite[Theorem 5.1]{Weiss}.

\begin{lemma}\label{lem:weakisvar}
If $u$ is a weak solution of $\Delta u =\HH^{n}( \d \{u >0\} \cap \cdot)$ in a ball $B$ in the sense of Definition \ref{def:wsol}, then it is also a variational solution in   the  ball $B$ in the sense of Definition \ref{def:vsol}.
\end{lemma}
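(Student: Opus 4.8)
The plan is to show that a weak solution $u$ in the sense of Definition \ref{def:wsol} satisfies all three conditions in Definition \ref{def:vsol}, the only nontrivial point being that the first variation of the functional $F$ in \eqref{eq:var.functional} vanishes at $v=u$. The regularity conditions (1) and (2) of Definition \ref{def:vsol} are essentially immediate: condition (1) of Definition \ref{def:wsol} gives $u\in C(B^+(u))$ and harmonicity in $\{u>0\}$, hence $u\in C^2(B^+(u))$ by interior elliptic regularity, while continuity of $u$ on all of $B$ together with harmonicity in $\{u>0\}$ and $u\ge 0$ yields $u\in C(B)$; and $\chi_{\{u>0\}}\in L^1_{loc}(B)$ is trivial since this function is bounded by $1$. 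So the heart of the matter is deriving the domain-variation identity \eqref{eq:1st-var} from the reduced-boundary Green-type identity \eqref{eq:wsol-green}.

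The key computation is the standard one: for $\phi\in C_c^\infty(B;\R^{n+1})$ and $\epsilon$ small, the map $x\mapsto x+\epsilon\phi(x)$ is a diffeomorphism of $B$ onto itself, and one writes $u_\epsilon(y) = u(x)$ where $y=x+\epsilon\phi(x)$, i.e.\ a pullback of the functional under the flow. Changing variables in $F(u_\epsilon)$ and differentiating at $\epsilon=0$ produces exactly the right-hand side of \eqref{eq:1st-var}: the Jacobian factor $\det(I+\epsilon D\phi) = 1+\epsilon\dv\phi + O(\epsilon^2)$ contributes the $(|\nabla u|^2+\chi_{\{u>0\}})\dv\phi$ term, and the chain-rule transformation of $\nabla u_\epsilon$ contributes the $-2\nabla u\, D\phi\,(\nabla u)^T$ term. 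This is a purely formal manipulation valid for any $u\in W^{1,2}_{loc}$. It remains to show this quantity is actually zero for our weak solution $u$. Here one splits $B$ into $\{u>0\}$ and $\{u=0\}$ (up to the boundary $\d\{u>0\}$, which has finite $\HH^n$-measure and hence is $m$-null). On $\{u=0\}$, $\nabla u = 0$ a.e.\ and $\chi_{\{u>0\}}=0$, so that region contributes nothing. On $\{u>0\}$, we use that $u$ is harmonic and $C^2$, integrate by parts, and the bulk terms combine to a boundary integral over $\d^*\{u>0\}$ involving the outer normal $\nu$; invoking \eqref{eq:wsol-green} together with $|\nabla u|=1$ $\HH^n$-a.e.\ on the reduced boundary (which itself follows from \eqref{eq:wsol-green} and the non-degeneracy \eqref{eq:wsol-avrg}, as in Weiss's argument) shows the boundary contributions cancel, giving \eqref{eq:1st-var}.

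Concretely, on $\{u>0\}$ one computes $\dv\big((|\nabla u|^2+1)\phi - 2(\nabla u\cdot\phi)\nabla u\big)$ and uses $\Delta u=0$ to see the volume integrand is a pure divergence; the divergence theorem on the set of finite perimeter $\{u>0\}$ then turns $\int_{\{u>0\}}[\,\cdots\,]\,dm$ into $\int_{\d^*\{u>0\}}\big[(|\nabla u|^2+1)(\phi\cdot\nu) - 2(\nabla u\cdot\phi)(\nabla u\cdot\nu)\big]\,d\HH^n$. Since $u=0$ on $\d^*\{u>0\}$, the tangential part of $\nabla u$ vanishes there, so $\nabla u = (\nabla u\cdot\nu)\,\nu = \pm|\nabla u|\,\nu$ and $\nabla u\cdot\phi = (\nabla u\cdot\nu)(\phi\cdot\nu)$; plugging in and using $|\nabla u|=1$, the integrand collapses to $(1+1)(\phi\cdot\nu) - 2(\phi\cdot\nu) = 0$. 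Combined with the vanishing contribution from $\{u=0\}$, this establishes \eqref{eq:1st-var} and hence that $u$ is a variational solution.

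The main obstacle I anticipate is not any single inequality but the careful justification of the integration by parts on the set of finite perimeter $\{u>0\}$: one needs $u$ and $\nabla u$ to extend suitably (with traces) up to the reduced boundary so that the divergence theorem applies to the vector field $(|\nabla u|^2+1)\phi - 2(\nabla u\cdot\phi)\nabla u$, which a priori is only defined and smooth in the \emph{open} set $\{u>0\}$. This is exactly the delicate regularity-theoretic input that Weiss handles in the proof of \cite[Theorem 5.1]{Weiss}; since the lemma statement explicitly says the proof is contained there, the cleanest route is to cite that argument for the passage from \eqref{eq:wsol-green} to the first-variation identity rather than reproducing it, and simply note that conditions (1) and (2) of Definition \ref{def:vsol} are contained in condition (1) of Definition \ref{def:wsol} plus elliptic regularity.
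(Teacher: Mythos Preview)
Your proposal is correct and in fact expands on the paper, which merely records the lemma as being contained in the proof of \cite[Theorem~5.1]{Weiss} without any further argument. Your sketch of the domain-variation computation and the reduction to a boundary integral over $\partial^*\{u>0\}$ via the divergence theorem, together with $|\nabla u|=1$ on the reduced boundary, is precisely Weiss's approach, and you correctly identify that the rigorous justification of the integration by parts is the technical content one defers to \cite{Weiss}.
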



\begin{lemma}\label{lem:bup=vsol}
Assume that $\Omega_\infty$ is the blow up domain and $u_\infty$ the blow-up Green's function constructed in Theorem \ref{thm:blowup}. If $B$ is a ball centered on $\d \{u_\infty>0\}=\d \Omega_\infty$, then the extension by zero of $u_\infty$ outside $ \{u_\infty>0\}$ is a weak solution of $\Delta u =\HH^{n}( \d \{u >0\} \cap \cdot)$ in $B$.
 \end{lemma}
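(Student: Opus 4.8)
To prove Lemma~\ref{lem:bup=vsol} I would verify the three conditions of Definition~\ref{def:wsol} for the zero-extension of $u_\infty$ on a ball $B$ centered on $\d\Omega_\infty$. Condition (1) is essentially already in hand: $u_\infty\in C(\overline\Omega_\infty)\subset C(B^+(u_\infty))$, $u_\infty\geq 0$, and $u_\infty$ is harmonic in $\{u_\infty>0\}=\Omega_\infty$ by Theorem~\ref{thm:blowup}; the local $W^{1,2}$ membership follows from the Lipschitz bound \eqref{eq:b-upLip} together with the fact that $\nabla u_\infty$ vanishes off $\Omega_\infty$, so $|\nabla u_\infty|\leq 1$ locally integrable. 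The substantive work is in conditions (2) and (3).

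\textbf{Non-degeneracy and regularity.} For the upper bound in \eqref{eq:wsol-avrg}, I would use that $\Omega_\infty$ is a chord-arc domain and $\omega_\infty$ is its harmonic measure with pole at infinity corresponding to $u_\infty$; by the relationship $u_\infty(x_B)\approx \omega_\infty(B)\,r(B)^{1-n}$ (a consequence of \eqref{e:wsimu}-type estimates, valid in any NTA domain) together with the AD-regularity giving $\omega_\infty(B)\lesssim\sigma_\infty(B)\approx r(B)^n$ — which is available because Corollary~\ref{cor:hinfty1} gives $\frac{d\omega_\infty}{d\sigma_\infty}=1$ a.e., hence at least an $A_\infty$-type upper regularity after using Corollary~\ref{corovmo0} in the limit — one bounds the average of $u_\infty$ over $\d B(x,r)$ by a dimensional constant. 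For the lower bound, the standard Alt--Caffarelli non-degeneracy argument applies: if the average were too small, a barrier/maximum-principle comparison against the harmonic replacement would force $u_\infty\equiv 0$ near $x$, contradicting $x\in\d\{u_\infty>0\}$; alternatively one transfers non-degeneracy from the corkscrew condition of $\Omega_\infty$, since a corkscrew point $y\in B(x,r)\cap\Omega_\infty$ with $d_{\Omega_\infty}(y)\approx r$ gives $u_\infty(y)\gtrsim r$ by Harnack from a fixed scale, and then Harnack/interior estimates propagate this to the spherical average.

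\textbf{Finite perimeter and the Green-type identity.} The set $\{u_\infty>0\}=\Omega_\infty$ has locally finite perimeter because $\d\Omega_\infty$ is $n$-AD-regular (AD-regular boundaries bound sets of locally finite perimeter, with $\HH^n$ controlling the perimeter measure). For \eqref{eq:wsol-green}, integrate by parts: for $\zeta\in C_c^\infty(B)$, $-\int\nabla u_\infty\cdot\nabla\zeta\,dm = \int_{\Omega_\infty}(\Delta u_\infty)\zeta\,dm - \int_{\d^*\Omega_\infty}\zeta\,(\nabla u_\infty\cdot\nu)\,d\HH^n$ where $\nu$ is the measure-theoretic inner normal; the first term vanishes by harmonicity, and the boundary term is controlled by the non-tangential limit of $\nabla u_\infty$, which by Lemma~\ref{lemnontang} equals $-\vec n(y)\frac{d\omega_\infty}{d\sigma_\infty}(y) = -\vec n(y)$ using Corollary~\ref{cor:hinfty1}. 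Hence $\nabla u_\infty\cdot\nu = (-\vec n)\cdot(-\vec n) = 1$ $\HH^n$-a.e.\ on $\d^*\Omega_\infty$, giving exactly \eqref{eq:wsol-green}. One technical point to address carefully here is justifying the integration-by-parts formula up to the rough boundary — this is where having the non-tangential gradient limits from Proposition~\ref{proponontang}/Lemma~\ref{lemnontang} and the $L^p$-control of the Poisson kernel from Corollary~\ref{corovmo} (applied to $\Omega_\infty$, which is chord-arc with $\omega_\infty=\sigma_\infty$) is essential, since it lets one approximate by integrals over $\d B(x,r)\cap\Omega_\infty$ with $r$ shrinking and pass to the limit.

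\textbf{Main obstacle.} The delicate step is the Green-type identity \eqref{eq:wsol-green}: making rigorous the integration by parts across $\d\Omega_\infty$ requires knowing that $\nabla u_\infty$ has well-defined non-tangential boundary values $\HH^n$-a.e.\ \emph{and} that these are attained in a strong enough sense (e.g.\ with a non-tangential maximal function in $L^2_{loc}(\sigma_\infty)$) to justify the limiting argument; this is precisely supplied by Lemma~\ref{lemnontang} and Corollary~\ref{corovmo} applied to the blow-up domain $\Omega_\infty$, together with the identification $\frac{d\omega_\infty}{d\sigma_\infty}=1$ from Corollary~\ref{cor:hinfty1}. The non-degeneracy lower bound is the second most delicate point, but it is by now a routine maximum-principle argument once the corkscrew condition for $\Omega_\infty$ is invoked.
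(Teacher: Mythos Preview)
Your proposal is correct, and for conditions (1) and (2) it matches the paper's proof closely; in particular, the paper handles both sides of \eqref{eq:wsol-avrg} in one stroke via
\[
r^{-n-1}\int_{\d B(x,r)}u_\infty\,d\HH^n \;\approx\; r^{-1}u_\infty(x_r)\;\approx\;\frac{\omega_\infty(B(x,r))}{\sigma_\infty(B(x,r))}=1,
\]
using \eqref{e:wsimu} and Corollary~\ref{cor:hinfty1}, which is exactly the corkscrew route you sketch.

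For condition (3), however, the paper takes a genuinely simpler path than yours. You propose to integrate $-\int\nabla u_\infty\cdot\nabla\zeta$ by parts, pushing the derivative onto $u_\infty$ and picking up a boundary term involving the trace of $\nabla u_\infty$, which you then identify via Lemma~\ref{lemnontang} and Corollary~\ref{cor:hinfty1}. This works (since $\log\frac{d\omega_\infty}{d\sigma_\infty}=\log 1\in\mathrm{VMO}$, the lemma applies), but as you yourself flag, justifying the Gauss--Green formula for the vector field $\zeta\,\nabla u_\infty$ up to the rough boundary is the delicate point. The paper sidesteps this entirely by running the integration by parts in the opposite direction: starting from the \emph{defining} relation \eqref{eq:Green-hminfty} for harmonic measure with pole at infinity,
\[
\int_{\d\Omega_\infty}\zeta\,d\HH^n=\int_{\d\Omega_\infty}\zeta\,d\omega_\infty=\int_{\Omega_\infty}u_\infty\,\Delta\zeta\,dm,
\]
and then writing $u_\infty\,\Delta\zeta=\dv(u_\infty\nabla\zeta)-\nabla u_\infty\cdot\nabla\zeta$. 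Now the Gauss--Green term $\int_{\Omega_\infty}\dv(u_\infty\nabla\zeta)\,dm$ involves only the boundary trace of $u_\infty$ itself, which is zero, so the identity \eqref{eq:wsol-green} drops out immediately. In short: your route needs the non-tangential limit of $\nabla u_\infty$ and a careful divergence theorem for a merely bounded vector field; the paper's route needs only the continuous vanishing of $u_\infty$ on $\d\Omega_\infty$ together with the weak relation \eqref{eq:Green-hminfty}, which is already part of the construction in Theorem~\ref{thm:blowup}.
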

 
 \begin{proof}
 By construction, $\Omega_\infty=\{u_\infty >0\}$, $u_\infty> 0$ in $\Omega_\infty$, $u_\infty= 0$ in $\d \Omega_\infty$, $u_\infty$ is harmonic in $\Omega_\infty$, $u_\infty \in C(\overline \Omega_\infty)$ and $|\nabla u_\infty | \leq 1$ in $\Omega_\infty$. Therefore, it is trivial to see that its extension by zero in the complement of $\Omega_\infty$ satisfies the condition (1) in Definition \ref{def:wsol} for the ball $B$. Notice also that by Harnack's inequality at the boundary, if $x_r$ is a corkscrew point in $B(x,r) \cap \Omega_\infty$, it holds 
 $$\max_{z \in \d B(x,r) \cap \Omega_\infty} u_\infty(z)=\max_{z \in B(x,r) \cap \Omega_\infty} u_\infty(z) \approx u_\infty(x_r).$$
 Therefore, we have that by \eqn{wsimu} and Corollary \ref{cor:hinfty1}
 $$ r^{-n-1} \int_{\d B(x,r) } u_\infty \,d\HH^{n} \approx \frac{\HH^{n}(\d B(x,r))}{r^{n+1}} \,u_\infty(x_r) \approx \frac{ \omega_\infty(B(x,r))}{\sigma_\infty(B(x,r))}=1.$$
Since ${\d \Omega_\infty}$ is $n$-AD regular, we have that $\HH^{n}|_{\d \Omega_\infty}$ is locally finite and thus, $\Omega_\infty$ is of locally finite perimeter in $\R^{n+1}$. By the generalized Gauss-Green formula for sets of locally finite perimeter, we infer that
\begin{align*}
\int_{\d \Omega_\infty} \zeta \, d\HH^{n}=\int_{\d \Omega_\infty} \zeta \,d \omega_\infty&=\int_{\Omega_\infty}  u_\infty\,  \Delta \zeta \,dm\\
&= \int_{\Omega_\infty}  \dv (u_\infty  \nabla \zeta) \,dm -  \int_{\Omega_\infty}  \nabla u_\infty \cdot  \nabla \zeta \,dm\\
&= 0-  \int_{\Omega_\infty}  \nabla u_\infty \cdot  \nabla \zeta \,dm, 
\end{align*}
for any $\zeta \in C^\infty_c(\R^{n})$. Note that $\HH^{n}(\d \Omega_\infty \setminus \d^* \Omega_\infty)=0$ in any NTA domain and thus, condition (3) in Definition \ref{def:wsol} is satisfied.
 \end{proof}
 
We state without proof a lemma from \cite{Jerison-Kamburov} which allows us to conclude that any blow-down domain of $\Omega_\infty$ is in fact a cone.

\begin{lemma}{\cite[Lemma 5.2]{Jerison-Kamburov}}\label{lem:blow-down-cone}
Let u be a variational solution of \eqref{eq:var.sol} in $\R^{n+1}$ which is globally Lipschitz. Assume that $0 \in F(u)$ and let $v$ be any limit of a uniformly convergent on compact sets sequence of 
$$v_j(x)=R_j^{-1} u(R_j x),$$
as $R_j \to \infty$. Then $v$ is Lipschitz continuous and homogeneous of degree 1.
\end{lemma}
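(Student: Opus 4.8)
The plan is to run the Weiss monotonicity argument. Since $u$ is globally Lipschitz, say with constant $L$, and since $0\in F(u)$ together with $u\ge 0$ and the continuity of $u$ force $u(0)=0$, each rescaling $v_j(x)=R_j^{-1}u(R_jx)$ is Lipschitz with the same constant $L$ and satisfies $v_j(0)=0$ and $0\le v_j(x)\le L|x|$. Hence any locally uniform limit $v$ is Lipschitz with constant $L$, with $v(0)=0$; this already gives the asserted Lipschitz continuity, and by Arzel\`a--Ascoli such limits exist along subsequences.

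The next step is to show that $v$ is again a variational solution of \eqref{eq:var.sol} in $\R^{n+1}$. The functional \eqref{eq:var.functional} is invariant, up to the overall factor $R^{-n-1}$, under $w\mapsto R^{-1}w(R\,\cdot)$, so each $v_j$ is a variational solution in $\R^{n+1}$. Using the non-degeneracy and the uniform lower density bounds for $\{v_j>0\}$ at free boundary points (both contained in Weiss's analysis in \cite{Weiss}), one upgrades the local uniform convergence $v_j\to v$ to strong convergence in $W^{1,2}_{loc}(\R^{n+1})$ and to $\chi_{\{v_j>0\}}\to\chi_{\{v>0\}}$ in $L^1_{loc}(\R^{n+1})$. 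With these two convergences in hand one passes to the limit in the first variation identity \eqref{eq:1st-var} written for $v_j$ and concludes that $v$ is a variational solution.

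Then I would bring in the Weiss boundary-adjusted energy: for a variational solution $w$ and a point $x_0\in F(w)$,
\begin{equation*}
W(w,x_0,r)=\frac{1}{r^{n+1}}\int_{B(x_0,r)}\bigl(|\nabla w|^2+\chi_{\{w>0\}}\bigr)\,dm-\frac{1}{r^{n+2}}\int_{\partial B(x_0,r)}w^2\,d\HH^{n}.
\end{equation*}
Weiss's monotonicity formula asserts that $r\mapsto W(w,x_0,r)$ is non-decreasing, and that it is constant in $r$ if and only if $w$ is homogeneous of degree one about $x_0$. A change of variables gives the scaling identity $W(v_j,0,r)=W(u,0,R_jr)$. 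Because $u$ is $L$-Lipschitz with $u(0)=0$, one has $|\nabla u|\le L$ a.e.\ and $u(x)\le L|x|$, whence $W(u,0,r)$ is bounded above and below by constants depending only on $n$ and $L$; together with monotonicity this shows that $W_\infty:=\lim_{r\to\infty}W(u,0,r)$ exists and is finite. Consequently, for each fixed $r>0$ we obtain $W(v_j,0,r)=W(u,0,R_jr)\to W_\infty$, while the strong $W^{1,2}_{loc}$ and $L^1_{loc}$ convergence of $v_j$ to $v$ gives $W(v_j,0,r)\to W(v,0,r)$. Hence $W(v,0,\cdot)\equiv W_\infty$ is constant, and the rigidity part of Weiss's formula forces $v$ to be homogeneous of degree one about $0$.

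The main obstacle is the compactness step of the second paragraph, namely that the blow-down limit is still a variational solution: one needs the strong $W^{1,2}_{loc}$ convergence together with the $L^1_{loc}$ convergence of the positivity sets $\{v_j>0\}$, both in order to pass to the limit in the first variation and in order to make the Weiss energies converge along the sequence. This is precisely where the non-degeneracy of variational solutions of the one-phase problem is used.
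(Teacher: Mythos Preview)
The paper does not prove this lemma; it is quoted from \cite{Jerison-Kamburov} and stated explicitly without proof. So there is no in-paper argument to compare against. Your sketch is the standard Weiss monotonicity route, which is indeed how the result is obtained in \cite{Jerison-Kamburov} (building on \cite{Weiss}), and the outline you give is correct.

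One small simplification: you do not actually need to verify that the blow-down limit $v$ is again a variational solution in order to get homogeneity. From Weiss's identity for $u$ (hence for each rescaling $v_j$) one has
\[
W(v_j,0,\rho_2)-W(v_j,0,\rho_1)=\int_{B_{\rho_2}\setminus B_{\rho_1}}\frac{2}{|x|^{n+3}}\bigl(x\cdot\nabla v_j-v_j\bigr)^2\,dm,
\]
and the left side equals $W(u,0,R_j\rho_2)-W(u,0,R_j\rho_1)\to 0$ because $W(u,0,\cdot)$ is monotone and bounded. Since the $v_j$ share a Lipschitz bound, $\nabla v_j\rightharpoonup\nabla v$ weakly in $L^2_{loc}$ and $v_j\to v$ uniformly on compacta, so weak lower semicontinuity already gives $x\cdot\nabla v=v$ a.e., i.e.\ $v$ is $1$-homogeneous. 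This bypasses the $L^1_{loc}$ convergence of $\chi_{\{v_j>0\}}$ and the strong $W^{1,2}_{loc}$ convergence that you flagged as the main obstacle; those are needed if one wants the stronger conclusion that $v$ is itself a variational solution, but not for homogeneity alone.
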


\vv

\begin{lemma}\label{lemweiss}
Assume that $\Omega_\infty\subset\R^{n+1}$ is the blow-up domain and $u_\infty$ the blow-up Green's function constructed in Theorem \ref{thm:blowup}. If $ x \in\d \Omega_\infty$, then any blow-down domain of $\Omega_\infty$ at $x$ is a cone.
\end{lemma}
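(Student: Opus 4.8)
The plan is to combine the structural facts already assembled in the excerpt. First I would observe that by Lemma~\ref{lem:bup=vsol} the zero-extension of $u_\infty$ is a weak solution of $\Delta u=\HH^n(\d\{u>0\}\cap\cdot)$ in every ball centered on $\d\Omega_\infty$, and hence, by Lemma~\ref{lem:weakisvar} (Weiss), it is a variational solution of the one-phase problem \eqref{eq:var.sol} in every such ball. Since these balls can be taken arbitrarily large, $u_\infty$ is in fact a variational solution in all of $\R^{n+1}$. Moreover, by \eqref{eq:b-upLip} we have $\sup_{z}|\nabla u_\infty(z)|\leq 1$, and since $u_\infty$ vanishes on $\d\Omega_\infty$ and is harmonic inside $\Omega_\infty$ with gradient bounded by $1$ (so $|u_\infty(z)-u_\infty(z')|\leq|z-z'|$ on $\overline{\Omega_\infty}$ and $u_\infty\equiv 0$ outside), $u_\infty$ is globally Lipschitz on $\R^{n+1}$ with Lipschitz constant at most $1$.

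Next I would set up the blow-down at the chosen point $x\in\d\Omega_\infty$. By the construction in Subsection~\ref{sec:bdown}, a blow-down of $\Omega_\infty$ at $x$ is obtained as a Hausdorff limit of the dilates $\Omega_i=r_i^{-1}(\Omega_\infty-x)$ with $r_i\to\infty$, together with a limit function $\ublowd$ of the rescaled Green functions; after translating so that $x=0$, these rescalings are exactly of the form $v_j(y)=R_j^{-1}u_\infty(R_j y)$ up to the harmless normalizing constant (which is bounded above and below because $\omega_\infty=\HH^n|_{\d\Omega_\infty}$ is $n$-AD-regular, so $\omega_\infty(B(x,r))\approx r^n$). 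Since $0\in F(u_\infty)=\d\Omega_\infty$ and $u_\infty$ is globally Lipschitz, Lemma~\ref{lem:blow-down-cone} applies directly: the limit $\ublowd$ is Lipschitz and homogeneous of degree $1$. Homogeneity of degree $1$ of $\ublowd$ forces its positivity set $\blowd=\{\ublowd>0\}$ to be a cone (with vertex at the origin), which is precisely the assertion that the blow-down domain of $\Omega_\infty$ at $x$ is a cone.

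The only genuine point requiring care is the identification of the blow-down construction of Subsection~\ref{sec:bdown} with the rescalings $v_j(y)=R_j^{-1}u_\infty(R_jy)$ appearing in Lemma~\ref{lem:blow-down-cone}: one must check that the normalizing factor $\sigma_\infty(B(x,r_i))/\bigl(r_i\,\omega_\infty(B(x,r_i))\bigr)$ in the definition of $u_i$ is comparable to $R_i^{-1}$ for $R_i=r_i$, which follows from $\omega_\infty=\sigma_\infty$ (a consequence of Corollary~\ref{cor:hinfty1} together with Lemma~\ref{lem:sigma}, which give $d\omega_\infty/d\sigma_\infty=1$ and hence $\omega_\infty(B(x,r))\approx r^n$), and that passing to a further subsequence one may assume $v_j$ converges locally uniformly — this is guaranteed by the uniform Lipschitz bound via Arzel\`a--Ascoli. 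Everything else is a direct chaining of the cited lemmas, so I do not expect any substantial obstacle.
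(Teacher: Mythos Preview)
Your proposal is correct and follows exactly the route the paper takes: the paper's own proof is the single sentence ``It follows from Lemmas \ref{lem:weakisvar}, \ref{lem:bup=vsol} and \ref{lem:blow-down-cone} in view of Subsection \ref{sec:bdown},'' and you have simply unpacked that chain of citations and checked the hypotheses (global Lipschitz bound from \eqref{eq:b-upLip}, identification of the rescalings, Arzel\`a--Ascoli). One minor simplification: since Corollary~\ref{cor:hinfty1} gives $\omega_\infty=\sigma_\infty$ exactly, the normalizing factor $\sigma_\infty(B(x,r_i))/\omega_\infty(B(x,r_i))$ equals $1$, so the blow-down rescalings coincide with $v_j(y)=r_j^{-1}u_\infty(r_j y)$ on the nose rather than merely up to a comparable constant.
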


By a cone we mean a set $F\subset \R^{n+1}$ such that if $x\in F$, then $\lambda x\in F$
for all $\lambda>0$. A conical domain is a domain which is a cone.

\begin{proof}
It follows from Lemmas \ref{lem:weakisvar}, \ref{lem:bup=vsol} and \ref{lem:blow-down-cone} in view of Subsection \ref{sec:bdown}.
\end{proof}

\vv
\begin{lemma}\label{lemhalf}
If $\Omega_0\subset\R^3$ is a conical two-sided NTA domain in $\R^{3}$ with $2$-AD-regular boundary such that 
$\frac{d\omega_0}{d\sigma_0}=1$ $\sigma_0$-a.e. in $\partial\Omega_0$, then $\Omega_0$ is a half-space.
\end{lemma}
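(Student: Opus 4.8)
The plan is to exploit the conical structure together with the free boundary condition to reduce the problem to a statement about the link of the cone, i.e.\ the set $\Sigma := \partial\Omega_0 \cap \mathbb{S}^2$. Since $\Omega_0$ is a conical two-sided NTA domain, $\Sigma$ is a closed set that separates $\mathbb{S}^2$ into (at least) two relatively open connected components, one corresponding to $\Omega_0 \cap \mathbb{S}^2$ and one to $\Omega_{0,\ext}\cap \mathbb{S}^2$; the corkscrew condition forces $\Sigma$ to have empty interior and the Harnack chain condition forces each of these components to be connected. By Corollary \ref{cor:hinfty1} (applied to the pseudo-blow-up, noting $\Omega_0$ arises as a blow-down so $|\nabla u_0| = \frac{d\omega_0}{d\sigma_0} = 1$ $\sigma_0$-a.e.) the Green function $u_0$ with pole at infinity is globally Lipschitz with $|\nabla u_0|\le 1$, harmonic in $\Omega_0$, vanishing on $\partial\Omega_0$, and $|\nabla u_0|=1$ a.e.\ on the boundary. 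First I would record that $u_0$ is homogeneous of degree $1$: since $u_0$ is itself obtained as a blow-down limit, Lemma \ref{lem:blow-down-cone} (via Lemmas \ref{lem:weakisvar} and \ref{lem:bup=vsol}) gives that $u_0$ is $1$-homogeneous. Hence $u_0(x) = |x|\,\psi(x/|x|)$ for a function $\psi$ on $\mathbb{S}^2$ which is positive on $\Omega_0\cap\mathbb{S}^2$, zero on $\Sigma$, and satisfies the spherical eigenvalue-type equation $\Delta_{\mathbb{S}^2}\psi = -2\psi$ (the Laplace eigenvalue $\lambda = k(k+n-1)$ with $k=1$, $n=2$, giving $\lambda = 2$) on $\Omega_0\cap\mathbb{S}^2$.

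The second and main step is a topological argument in the spirit of Caffarelli--Jerison--Kenig \cite{CJK}, carried out on the sphere $\mathbb{S}^2$. One works simultaneously with $u_0$ on $\Omega_0$ and with the analogous homogeneous degree-one harmonic function $v_0$ on the exterior domain $\Omega_{0,\ext}$: by the two-sided hypothesis the exterior is also a conical chord-arc domain, and by the same blow-down/blow-up machinery (using that $|\nabla u_0|=1$ forces, via the jump relations of Section \ref{sec2} or directly via \eqref{eq:n.t.limit}, the exterior Poisson kernel to also be $1$ a.e.) one gets a $1$-homogeneous $v_0>0$ on $\Omega_{0,\ext}$, vanishing on $\partial\Omega_0$, with $|\nabla v_0|=1$ a.e.\ on the boundary. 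The function $w := u_0 - v_0$ is then harmonic across $\partial\Omega_0$ away from the origin — this is precisely the point where the free boundary condition $|\nabla u_0| = |\nabla v_0| = 1$ is used: the distributional Laplacians of the zero-extensions of $u_0$ and of $-v_0$ both equal $\pm\mathcal{H}^n\llcorner\partial\Omega_0$ with the same density (compare Lemma \ref{lem:bup=vsol} and \eqref{eq:wsol-green}), so they cancel and $w$ is harmonic in $\R^3\setminus\{0\}$, $1$-homogeneous, hence (by removability of the point singularity for a bounded-on-spheres homogeneous harmonic function, or by Liouville after noting $1$-homogeneity) $w$ is linear: $w(x) = a\cdot x$ for some $a\in\R^3$. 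Thus $u_0 = v_0 + a\cdot x$. Now on $\partial\Omega_0$ we have $u_0 = v_0 = 0$, so $a\cdot x = 0$ on $\partial\Omega_0$, i.e.\ $\partial\Omega_0$ is contained in the plane $\{a\cdot x = 0\}$ — unless $a=0$. If $a=0$ then $u_0 = v_0$, but $u_0 > 0$ on $\Omega_0$ and $v_0 = 0$ on $\Omega_0$ (since $v_0$ is supported in the exterior), a contradiction; hence $a\ne 0$ and $\partial\Omega_0 \subset \{a\cdot x=0\}$. Since $\partial\Omega_0$ is $2$-AD-regular and $\{a\cdot x = 0\}$ is a $2$-plane, $\partial\Omega_0$ must be \emph{all} of that plane (an AD-regular set cannot be a proper closed subset of a plane with positive codimension-zero complement, and the corkscrew condition rules out $\partial\Omega_0$ being a half-plane or less); so $\Omega_0$ is one of the two half-spaces, as desired.

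The step I expect to be the main obstacle is the removability/harmonicity-across-the-boundary argument for $w = u_0 - v_0$, or rather the justification that the exterior function $v_0$ can be produced with matching free boundary data $|\nabla v_0| = 1$ a.e.\ — this requires checking that the blow-down procedure of Subsection \ref{sec:bdown}, together with the non-tangential limit identity \eqref{eq:n.t.limit} applied in the exterior domain, really forces the exterior Poisson kernel density to be $1$, which uses $\omega_0 = \sigma_0$ on the interior side plus the two-sided NTA structure in an essential way. A secondary subtlety is handling the point singularity at $0$: one must argue that the $1$-homogeneous harmonic function $w$ on $\R^3\setminus\{0\}$ extends harmonically through $0$; this follows because $w$ is bounded on the unit sphere hence $|w(x)| \lesssim |x|$, which kills any negative-homogeneity spherical harmonic in its expansion, leaving only the degree-one part, i.e.\ $w$ is linear. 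An alternative route, avoiding the exterior function, would be to invoke directly a Liouville-type rigidity theorem for globally Lipschitz $1$-homogeneous variational solutions of the one-phase problem in $\R^3$ — essentially the classification of global stable cones in low dimensions — but carrying out the two-function comparison as above is the more self-contained path and is the one I would write up.
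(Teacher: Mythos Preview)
Your main approach has a genuine gap at precisely the step you flag as the main obstacle: the claim that the exterior Poisson kernel $h_{\ext} := \frac{d\omega_{0,\ext}}{d\sigma_0}$ equals $1$ a.e.\ cannot be deduced from the hypotheses. The jump relations of Section~\ref{sec2} and the non-tangential identity \eqref{eq:n.t.limit} relate the \emph{interior} Green function $u_0$ to the \emph{interior} harmonic measure $\omega_0$; they say nothing about $\omega_{0,\ext}$. Concretely, the zero-extensions satisfy $\Delta u_0 = h_0\,\sigma_0$ and $\Delta v_0 = h_{\ext}\,\sigma_0$ as distributions, so $\Delta(u_0 - v_0) = (h_0 - h_{\ext})\,\sigma_0$; the harmonicity of $w = u_0 - v_0$ across $\partial\Omega_0$ is therefore \emph{equivalent} to $h_0 = h_{\ext}$, which is exactly what you are trying to assume. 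If this deduction were available from the two-sided chord-arc structure plus the interior condition alone, it would yield the result in all dimensions --- but Section~\ref{sec:counter} exhibits a two-sided chord-arc cone in $\R^4$ with $h_0 \equiv 1$ that is not a half-space, so no dimension-free transfer of the free boundary condition to the exterior can exist. (What your argument effectively reconstructs is the \emph{two-phase} hypothesis of Theorem~B, which assumes both $\log h_0$ and $\log h_{\ext}$ lie in VMO.)

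The paper's proof takes a one-sided geometric route that is specific to $\R^3$: by results of Caffarelli--Jerison--Kenig \cite{CJK} (a mean-curvature analysis of the free boundary, resting only on the interior condition $|\nabla u_0| = 1$), the link $\partial\Omega_0 \cap \bS^2$ is a convex curve and $\Omega_0^c$ is a convex cone; then a Liouville theorem of Farina--Valdinoci \cite{Farina-Valdinoci} for overdetermined problems on complements of convex cones forces $\Omega_0$ to be a half-space. Your ``alternative route'' in the last sentence --- invoking the classification of $1$-homogeneous one-phase solutions in $\R^3$ --- is in fact essentially the paper's actual route, and is the one you should write up.
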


\begin{proof}
Since $\Omega_0$ is a conical two-sided NTA domain, the intersection of $\Omega_0$ with the sphere $S^2$ is an open
connected subset of $S^2$, and the interior of its complement should be another open connected set of $S^2$. Further, as shown in
\cite{CJK} (see Remark 2 and p.\ 92 in this reference) by studying the mean curvature of $\partial\Omega_0\cap S^2$, one deduces that $\partial\Omega_0\cap S^2$ is a convex curve and
$\Omega_0^c$ is a convex cone. One can check that a convex cone in $\R^3$ is a Lipschitz domain, and also its exterior domain.
Hence, by the results of Farina and Valdinoci \cite{Farina-Valdinoci} (or by arguments analogous to the ones in \cite[p.\ 92]{CJK}), $\Omega_0$ is a half-space.
\end{proof}

\vv


\begin{coro}\label{corokey1}
Suppose that $\Omega_0$ is a two-sided NTA domain in $\R^3$ with $2$-AD-regular boundary such that 
$\frac{d\omega_0}{d\sigma_0}=1$ $\sigma_0$-a.e.\ in $\partial\Omega_0$.
Then, for any $x\in\partial \Omega_0$,
$$\lim_{r\to\infty}\Theta_{\d\Omega_0}(x,r)=0.$$
\end{coro}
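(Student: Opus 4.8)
The plan is to argue by contradiction and reduce, via a blow-down at $x$, to the conical case already settled in Lemma~\ref{lemhalf}. After translating so that $x=0$, I would first observe that $\Omega_0$ is an unbounded chord-arc domain with $\log\frac{d\omega_0}{d\sigma_0}=0\in\textrm{VMO}(\sigma_0)$, so Lemma~\ref{lemgradgreen1} gives $|\nabla u_0(z)|\le\int_{\partial\Omega_0}\frac{d\omega_0}{d\sigma_0}\,d\omega_0^z=\omega_0^z(\partial\Omega_0)=1$ for all $z\in\Omega_0$. Combining this gradient bound with $u_0\equiv0$ on $\partial\Omega_0$ — integrating $\nabla u_0$ along the segment from an interior point to its nearest boundary point, which stays in $\Omega_0$ — shows that the extension of $u_0$ by zero is globally $1$-Lipschitz on $\R^{n+1}$. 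Then, repeating the proof of Lemma~\ref{lem:bup=vsol} with the only change that the identity $\omega_0(B(z,r))/\sigma_0(B(z,r))=1$ is now a hypothesis (so Corollary~\ref{cor:hinfty1} is not needed, while \eqn{wsimu} and \eqref{eq:Green-hminfty} play their usual roles and the Gauss--Green term on $\partial\Omega_0$ vanishes because $u_0=0$ there), this extension is a weak solution of $\Delta u=\HH^{n}(\partial\{u>0\}\cap\cdot)$ in every ball centered on $\partial\Omega_0$; by Lemma~\ref{lem:weakisvar} it is a variational solution, hence a globally Lipschitz variational solution of \eqref{eq:var.sol} in all of $\R^{n+1}$ with $0\in F(u_0)$.

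Next I would carry out the blow-down. Assuming $\Theta_{\partial\Omega_0}(0,r)\not\to0$, fix $\varepsilon_0>0$ and $r_i\to\infty$ with $\Theta_{\partial\Omega_0}(0,r_i)\ge\varepsilon_0$, and run the construction of Subsection~\ref{sec:bdown} with $x_i=0$, $p_i=\infty$: after passing to a subsequence, $\Omega_i=\frac1{r_i}\Omega_0$ and $\partial\Omega_i$ converge in the Hausdorff metric on compact sets to a chord-arc domain $\widetilde\Omega$ and $\partial\widetilde\Omega$, and $u_i(y)=r_i^{-1}u_0(r_iy)$ converges locally uniformly to $\tilde u$ with $\widetilde\Omega=\{\tilde u>0\}$. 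I would then extract three facts about $\widetilde\Omega$: (i) it is again two-sided NTA with $2$-AD-regular boundary, since the relevant constants are scale invariant and pass to Hausdorff and weak limits; (ii) $\frac{d\omega_{\widetilde\Omega}}{d\sigma_{\widetilde\Omega}}=1$, because $\omega_0=\sigma_0$ forces the rescaled Poisson kernels to be $h_i\equiv1$, so $\omega_i=\sigma_i$, and the weak convergences $\omega_i\rightharpoonup\omega_{\widetilde\Omega}$, $\sigma_i\rightharpoonup\sigma_{\widetilde\Omega}$ (the blow-down analogues of Theorem~\ref{thm:blowup} and Lemma~\ref{lem:sigma}) give $\omega_{\widetilde\Omega}=\sigma_{\widetilde\Omega}$; (iii) $\widetilde\Omega$ is a cone, since $u_i=r_i^{-1}u_0(r_i\,\cdot)$ is precisely the rescaling in Lemma~\ref{lem:blow-down-cone}, so its limit $\tilde u$ is homogeneous of degree $1$.

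Finally, Lemma~\ref{lemhalf} applied to the conical two-sided NTA domain $\widetilde\Omega\subset\R^3$ with $2$-AD-regular boundary and $\frac{d\omega_{\widetilde\Omega}}{d\sigma_{\widetilde\Omega}}=1$ shows $\widetilde\Omega$ is a half-space, whence $\Theta_{\partial\widetilde\Omega}(0,1)=0$. Since $\partial\Omega_i\to\partial\widetilde\Omega$ in the Hausdorff metric on $B(0,2)$, testing the flatness number against the limiting plane gives $\Theta_{\partial\Omega_i}(0,1)\to0$; but $\Theta_{\partial\Omega_i}(0,1)=\Theta_{\partial\Omega_0}(0,r_i)\ge\varepsilon_0$ by the scaling invariance $\Theta_{r^{-1}(E-x)}(0,1)=\Theta_E(x,r)$, a contradiction, so $\lim_{r\to\infty}\Theta_{\partial\Omega_0}(x,r)=0$. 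I expect the main obstacle to be step~(ii): justifying that the rescaled harmonic measures $\omega_i$ converge weakly to the harmonic measure of $\widetilde\Omega$ and $\sigma_i$ to $\sigma_{\widetilde\Omega}$, i.e.\ that the compactness in Theorem~\ref{thm:blowup} and Lemma~\ref{lem:sigma} carries over to the blow-down regime; the remaining ingredients are either direct citations or routine scaling and limit arguments.
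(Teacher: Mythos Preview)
Your proposal is correct and follows essentially the same route as the paper, which records the proof in a single sentence: ``This is an immediate consequence of Lemmas~\ref{lemweiss} and~\ref{lemhalf}.'' You have simply unpacked what that sentence entails --- verifying that $u_0$ is a globally Lipschitz variational solution (the content of Lemmas~\ref{lem:weakisvar} and~\ref{lem:bup=vsol}, rederived for $\Omega_0$ rather than for a pseudo-blow-up $\Omega_\infty$), applying Lemma~\ref{lem:blow-down-cone} to see the blow-down is conical, and then invoking Lemma~\ref{lemhalf}. Your flagged step~(ii) is indeed the one place where a little care is needed, but your sketch for it is right: since $\omega_0=\sigma_0$, the normalizing factor in the blow-down construction of Subsection~\ref{sec:bdown} is $1$, so $\omega_i=\sigma_i$ identically, and the weak limits coincide; this is precisely the ``similar (but easier) arguments'' the paper alludes to there.
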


\begin{proof}
This is an immediate consequence of Lemmas \ref{lemweiss} and \ref{lemhalf}.
\end{proof}

\vvv


\section{The Alt-Caffarelli theorem}

The objective of this section is to explain how to prove the following lemma.

\begin{lemma}\label{lemac23}
Let $\Omega_0$ be an NTA domain in $\R^{n+1}$ with $n$-AD-regular boundary with constant $C_0$.  Suppose $0\in \d\Omega_{0}$ and 
\begin{equation}\label{e:w=1}
\frac{d\omega_0}{d\sigma_0}\equiv1\;\;\; \sigma_0\mbox{-a.e. in }\;\; \partial\Omega_0.
\end{equation}
There exists $\delta_{0}>0$ small enough depending on  $n$, the NTA character of $\Omega_0$, and $C_0$ such that if $\bB=B(0,1)$ satisfies
\begin{equation}\label{e:bt<d}
\Theta_{\d\Omega_0}(\lambda \bB)\leq \delta_0
\quad \mbox{for all $\lambda>1$,}
\end{equation}
then $\Omega_{0}$ is a half-space. 
\end{lemma}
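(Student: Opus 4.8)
The statement to prove is Lemma \ref{lemac23}: an NTA domain $\Omega_0$ with AD-regular boundary, passing through $0$, with $\frac{d\omega_0}{d\sigma_0}\equiv 1$, and whose boundary is $\delta_0$-flat at every scale $\lambda\geq 1$ around the unit ball, must be a half-space. My plan is to run the Alt--Caffarelli regularity theory for the one-phase Bernoulli problem, but adapted to the (unbounded, blown-up) setting. First I would record that, by Lemma \ref{lem:weakisvar} and Lemma \ref{lem:bup=vsol} (whose proof is local and applies verbatim here since the only inputs are the properties $u_0>0$, $u_0$ harmonic in $\Omega_0$, $u_0=0$ on $\partial\Omega_0$, $|\nabla u_0|\le 1$, AD-regularity, and $\omega_0=\sigma_0$), the Green function $u_0$ with pole at infinity, extended by zero, is a weak — hence variational — solution of $\Delta u=\HH^n(\partial\{u>0\}\cap\cdot)$ in every ball. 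In particular the nondegeneracy estimate \eqref{eq:wsol-avrg} holds with uniform constants on all of $\R^{n+1}$, and $|\nabla u_0|\le 1$ gives the global Lipschitz bound. So $u_0$ is a globally Lipschitz variational solution of \eqref{eq:var.sol} in $\R^{n+1}$ with $0\in F(u_0)$.

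**Main steps.** The hypothesis \eqref{e:bt<d} says precisely that $\partial\Omega_0$ is $\delta_0$-Reifenberg flat at \emph{large} scales (all $\lambda>1$) near the origin; combined with the corkscrew/two-sided structure this forces, for $\delta_0$ small, that $\partial\Omega_0$ in the exterior of the unit ball is trapped in a thin slab around a fixed hyperplane $P$, with $\Omega_0$ on one side. The strategy is then: (i) by the $\varepsilon$-regularity (``flatness implies $C^{1,\alpha}$'') theorem of Alt--Caffarelli — in the form valid for weak/variational solutions, see \cite{Alt-Caffarelli} and its refinements — if $\delta_0$ is small enough, $F(u_0)\cap (\lambda\bB)^c$ is a $C^{1,\alpha}$ graph over $P$ with small norm, at every large scale simultaneously, so by scaling the graph is in fact \emph{affine at infinity}: more precisely the rescalings $u_0(R x)/R$ converge, as $R\to\infty$, to a global solution whose free boundary is the hyperplane $P$; (ii) a global Lipschitz variational solution of the one-phase problem in $\R^{n+1}$ with a hyperplane as free boundary is the linear function $(x\cdot\nu)^+$ — this is the rigidity/Liouville statement underlying \cite{Alt-Caffarelli}. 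So the blow-\emph{down} of $u_0$ at $0$ is the half-space solution. (iii) Now I would upgrade from ``one blow-down is flat'' to ``$u_0$ itself is flat.'' Here I invoke the Weiss monotonicity formula \cite{Weiss}: the Weiss density $W(u_0,x,r)=r^{-n-1}\int_{B(x,r)}(|\nabla u_0|^2+\chi_{\{u_0>0\}})\,dm - r^{-n-2}\int_{\partial B(x,r)}u_0^2\,d\HH^n$ is monotone nondecreasing in $r$ for the variational solution $u_0$, its limit as $r\to\infty$ equals the density of the (flat) blow-down, and its limit as $r\to 0^+$ equals the density of the blow-up at $0$; since $0\in F(u_0)$ and every blow-down is flat with the same universal density $W_{\mathrm{flat}}$, monotonicity forces $W(u_0,0,r)\equiv W_{\mathrm{flat}}$ for all $r$. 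Constancy of Weiss' functional (its scaling derivative vanishes) forces $u_0$ to be homogeneous of degree $1$ about $0$; being also globally Lipschitz and a variational solution with flat blow-down, $u_0$ must be the half-space solution $(x\cdot\nu)^+$ itself. Hence $\Omega_0=\{x\cdot\nu>0\}$ is a half-space.

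**Where the difficulty lies.** The genuinely delicate point is step (i): applying the Alt--Caffarelli flatness-implies-regularity theorem \emph{uniformly across all large scales} and for the \emph{weak} (not a priori minimizing) solution $u_0$. One must check that $u_0$ satisfies the hypotheses of the version of $\varepsilon$-regularity one wants to cite — nondegeneracy from both sides (which is where the two-sided NTA hypothesis, giving exterior corkscrews, and $\omega_0=\sigma_0$ enter, controlling the upper bound in \eqref{eq:wsol-avrg}), the Lipschitz bound, and genuine Reifenberg flatness of $F(u_0)$ at the relevant scales, which \eqref{e:bt<d} supplies. A clean way to organize this, and the one I would adopt, is: rather than quoting a ``large-scale'' regularity statement, apply Lemma \ref{lemweiss}-style reasoning in reverse — use Lemma \ref{lem:blow-down-cone} to say the blow-down is a Lipschitz $1$-homogeneous variational solution (a cone), then note that \eqref{e:bt<d} forces that cone's free boundary to be within $\delta_0$ of a hyperplane, and finally invoke the rigidity statement that a flat cone solution \emph{is} the half-space (this is exactly the $n=2$ input of Lemma \ref{lemhalf}, but in higher dimensions the flatness $\delta_0\ll 1$ is what replaces convexity). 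Then feed this back through Weiss monotonicity as in step (iii). The second subtle point is making sure Weiss' formula \cite{Weiss} applies to $u_0$ on the unbounded domain and that the $r\to\infty$ limit is finite and equals the blow-down density — this is routine given the uniform nondegeneracy and Lipschitz bounds but must be stated. Routine calculations (verifying the first-variation identity, the explicit value $W_{\mathrm{flat}}$, the scaling) I would not reproduce in detail.
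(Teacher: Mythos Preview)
Your approach is genuinely different from the paper's. The paper proves the lemma by a direct improvement-of-flatness iteration in the Alt--Caffarelli style (in the Kenig--Toro formulation via the classes $F(\sigma_+,\sigma)$): from the hypothesis \eqref{e:bt<d} one gets $u_0\in F(\delta_0,1)$ in every large ball, and alternating two lemmas (``$F(\sigma,1)\Rightarrow F(2\sigma,C\sigma)$ in half the ball'' and ``$F(\sigma,\sigma)\Rightarrow F(\theta\sigma,1)$ in a fixed fraction of the ball'') yields a geometric decay of flatness as the scale decreases. Starting the iteration at arbitrarily large radius and sending that radius to infinity gives $\Theta_{\partial\Omega_0}(0,s)=0$ for every $s$, hence $\partial\Omega_0$ is a hyperplane. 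No blow-down and no Weiss formula enter the paper's argument for this lemma.

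Your route through blow-downs and Weiss monotonicity could be made to work, but as written step~(iii) has a real gap. You assert that ``monotonicity forces $W(u_0,0,r)\equiv W_{\mathrm{flat}}$ for all $r$.'' Weiss monotonicity only says $r\mapsto W(u_0,0,r)$ is nondecreasing, so $W(u_0,0,\infty)=W_{\mathrm{flat}}$ gives merely the \emph{upper} bound $W(u_0,0,r)\le W_{\mathrm{flat}}$. To conclude constancy you must supply the matching lower bound $W(u_0,0,0^+)\ge W_{\mathrm{flat}}$, i.e., that the half-plane minimizes the Weiss density among all degree-$1$ homogeneous solutions with $0$ on the free boundary. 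This is true --- for a cone solution the Weiss density reduces to $\frac{1}{n+1}\,\HH^n(\{u>0\}\cap S^n)$, and a spherical Faber--Krahn argument shows that any domain on $S^n$ with first Dirichlet eigenvalue equal to $n$ has measure at least $\HH^n(S^n)/2$ --- but you neither state nor prove it, and without it the chain breaks. Note also that your step ``a $\delta_0$-flat cone solution is the half-space'' is itself the Alt--Caffarelli $\varepsilon$-regularity theorem specialized to cones, so you are not bypassing the improvement-of-flatness machinery: you invoke it on the blow-down and then need two additional ingredients (Weiss monotonicity and density minimality) to transfer back to $u_0$. The paper's direct iteration uses the same core mechanism once and needs neither extra ingredient.
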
 
\vv

Before turning to the proof of this lemma, notice that an
 immediate consequence of this and Corollary \ref{corokey1} is the following.

\begin{coro}\label{corokey**}
Suppose that $\Omega_0$ is a two-sided chord-arc in $\R^3$ such that 
$\frac{d\omega_0}{d\sigma_0}=1$ $\sigma_0$-a.e.\ in $\partial\Omega_0$.
Then, $\Omega_0$ is a half space.
\end{coro}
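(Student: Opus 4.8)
Proof proposal for Corollary \ref{corokey}.**

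The plan is to deduce Corollary \ref{corokey**} directly by combining Corollary \ref{corokey1} with Lemma \ref{lemac23}, so essentially no new work is needed beyond checking that the hypotheses of Lemma \ref{lemac23} are met at every boundary point. First I would fix an arbitrary point $x_0\in\partial\Omega_0$ and, after translating, assume $x_0=0$. Since $\Omega_0$ is a two-sided chord-arc domain in $\R^3$, it is in particular an NTA domain with $2$-AD-regular boundary, so the standing hypotheses of Lemma \ref{lemac23} hold with $n=2$, and the hypothesis \eqref{e:w=1} is exactly our assumption $\frac{d\omega_0}{d\sigma_0}\equiv 1$.

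Next I would invoke Corollary \ref{corokey1}: because $\Omega_0$ is a two-sided NTA domain in $\R^3$ with $2$-AD-regular boundary and $\frac{d\omega_0}{d\sigma_0}=1$ $\sigma_0$-a.e., we have $\lim_{r\to\infty}\Theta_{\partial\Omega_0}(0,r)=0$. In particular, given the threshold $\delta_0>0$ furnished by Lemma \ref{lemac23} (which depends only on $n=2$, the NTA character of $\Omega_0$, and the AD-regularity constant $C_0$), there exists $R_0>0$ such that $\Theta_{\partial\Omega_0}(0,r)\le\delta_0$ for all $r\ge R_0$. Rescaling by $R_0$, i.e. replacing $\Omega_0$ by $\frac{1}{R_0}\Omega_0$ (which leaves the NTA and AD-regularity constants, and the condition $\frac{d\omega_0}{d\sigma_0}\equiv1$, unchanged, since all of these are scale-invariant), we obtain that the unit ball $\bB=B(0,1)$ satisfies $\Theta_{\partial\Omega_0}(\lambda\bB)\le\delta_0$ for all $\lambda>1$, which is precisely condition \eqref{e:bt<d}.

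Applying Lemma \ref{lemac23} to this rescaled domain then yields that it is a half-space, and undoing the rescaling shows $\Omega_0$ itself is a half-space. The only point requiring a word of care is that $\delta_0$ in Lemma \ref{lemac23} must be chosen before we know how large $R_0$ has to be, but there is no circularity: $\delta_0$ depends only on the NTA and AD-regularity constants of $\Omega_0$, which are fixed once and for all and unaffected by the dilation, so we may first extract $\delta_0$ from Lemma \ref{lemac23}, then use Corollary \ref{corokey1} to find the corresponding $R_0$. I do not anticipate a genuine obstacle here; the substance of the argument is entirely contained in Corollary \ref{corokey1} (the blow-down analysis via the Weiss monotonicity formula and the convex-cone rigidity of Lemma \ref{lemhalf}) and in Lemma \ref{lemac23} (the quantitative Alt--Caffarelli flatness statement), and this corollary is merely the clean packaging of those two results.
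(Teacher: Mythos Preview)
Your proposal is correct and follows essentially the same approach as the paper, which simply states that the corollary is an immediate consequence of Corollary \ref{corokey1} and Lemma \ref{lemac23}. Your added details about rescaling by $R_0$ and the noncircular order of choosing $\delta_0$ before $R_0$ are a fine way to make the ``immediate consequence'' explicit.
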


Lemma \ref{lemac23} is essentially proven in \cite{Kenig-Toro3}, which assumes that the domain is Reifenberg flat. This is a variant of some of the results by Alt and Caffarelli in \cite{Alt-Caffarelli}. In \cite{Kenig-Toro3}
the authors also assume in the statement of their theorem that
\begin{equation}\label{e:u<1}
|\nabla u_{0}|\leq \chi_{\Omega}
\end{equation}
where $u_{0}$ is its Green function with pole at infinity. However, this estimate is an immediate consequence of the assumptions of Lemma \ref{lemac23} (specially, \eqn{w=1}) and Lemma \ref{lemgradgreen1}. Thus, we will only explain how to read and adjust the proof in \cite{Kenig-Toro3} in order to obtain the lemma, adding details where necessary.

\begin{lemma}\label{lem8.3}
Let $\Omega\subset \R^{n+1}$ be a two-sided $C$-corkscrew domain so that $\Omega_{\ext}$ is also connected. Then whenever $\xi\in \d\Omega$, $r>0$, and $\beta_{\d\Omega}(\xi,r,P)<\frac{1}{2C}$ for some $n$-plane $P$, 
\begin{equation}\label{e:t<b}
\Theta_{\d\Omega}(\xi,r/2,P)\leq 2 \,\beta_{\d\Omega}(\xi,r,P)
\end{equation}
and there are half spaces $H^{\pm}$ such that 
\[
H^{+}\cup H^{-}=\bigl\{y:\dist(y,P)> \beta_{\d\Omega}(\xi,r,P) \bigr\},\] 
\[
H^{+}\cap B(\xi,r)\subset \Omega, \mbox{ and }  H^{-}\cap B(\xi,r)\subset \Omega_{\ext}.\] 
In particular, if $\pi_{P}$ is the projection onto $P$, then $\pi_{P}(\d\Omega\cap B(\xi,r))\supseteq \pi_P(B(\xi,r/2))$.
\end{lemma}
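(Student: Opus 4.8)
Write $\beta := \beta_{\d\Omega}(\xi,r,P)$, fix a unit normal $\nu$ to $P$, and let $H^{+}$ and $H^{-}$ denote the two open half-spaces forming $\R^{n+1}\setminus\{y:\dist(y,P)\le\beta r\}$, i.e.\ the two connected components of the complement of the closed slab of half-width $\beta r$ around $P$, so that $H^{+}\cup H^{-}=\{y:\dist(y,P)>\beta r\}$. By the definition of $\beta$, every point of $\d\Omega\cap B(\xi,r)$ lies in that slab; hence the convex sets $H^{+}\cap B(\xi,r)$ and $H^{-}\cap B(\xi,r)$ (nonempty, since $\beta<\tfrac12$) are connected and disjoint from $\d\Omega$. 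Because $\R^{n+1}$ is the disjoint union of the two \emph{open} sets $\Omega$ and $\Omega_{\ext}=(\overline\Omega)^{c}$ together with $\d\Omega$, each of $H^{+}\cap B(\xi,r)$ and $H^{-}\cap B(\xi,r)$ is contained entirely in $\Omega$ or entirely in $\Omega_{\ext}$.

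The first step is to separate the two caps using the two-sided corkscrew condition. There is a ball $B(x_{\Omega},r/C)\subset B(\xi,r)\cap\Omega$; since $\beta<\tfrac1{2C}$ its radius $r/C$ exceeds the half-width $\beta r$ of the slab, so this ball is not contained in the slab and meets $H^{+}$ or $H^{-}$. After relabelling, assume it meets $H^{+}$; then $H^{+}\cap B(\xi,r)$ meets $\Omega$, hence $H^{+}\cap B(\xi,r)\subset\Omega$. Likewise there is a ball $B(x_{\ext},r/C)\subset B(\xi,r)\setminus\Omega$, which again meets $H^{+}$ or $H^{-}$; it cannot meet $H^{+}$ (that set lies in $\Omega$), so it meets $H^{-}$, and therefore $H^{-}\cap B(\xi,r)\subset\Omega_{\ext}$. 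This proves the middle assertion.

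The first and third assertions both follow from one ``transversal chord meets $\d\Omega$'' argument. Given $y\in B(\xi,r/2)$, put $w:=\pi_{P}(y)\in P$ and let $\ell_{w}:=B(\xi,r)\cap\{w+t\nu:t\in\R\}$. In the parameter $t$ this chord is the open interval $(t^{*}-L,\,t^{*}+L)$ with $|t^{*}|=|\langle\xi-w,\nu\rangle|=\dist(\xi,P)\le\beta r$ (as $w\in P$) and $L=\sqrt{r^{2}-|\pi_{P}(\xi)-w|^{2}}\ge\sqrt{r^{2}-(r/2)^{2}}=\tfrac{\sqrt3}{2}\,r$, since $|\pi_{P}(\xi)-w|=|\pi_{P}(\xi)-\pi_{P}(y)|\le|\xi-y|<r/2$. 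As $\beta<\tfrac1{2C}\le\tfrac14<\tfrac{\sqrt3}{4}$, we get $t^{*}+L>\beta r$ and $t^{*}-L<-\beta r$, so $\ell_{w}$ contains points with $t>\beta r$ (which lie in $H^{+}\cap B(\xi,r)\subset\Omega$) and points with $t<-\beta r$ (which lie in $H^{-}\cap B(\xi,r)\subset\Omega_{\ext}$). Being connected and meeting both $\Omega$ and $\Omega_{\ext}$, $\ell_{w}$ meets $\d\Omega$ at some $z=w+t_{z}\nu$; and any point of $\d\Omega$ on $\ell_{w}$ must have $|t_{z}|\le\beta r$, since points of $\ell_{w}$ with $|t|>\beta r$ lie in $\Omega\cup\Omega_{\ext}$. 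Thus $z\in\d\Omega\cap B(\xi,r)$, $\pi_{P}(z)=w=\pi_{P}(y)$ and $|z-w|\le\beta r$. Letting $y$ vary over $B(\xi,r/2)$ yields $\pi_{P}(B(\xi,r/2))\subseteq\pi_{P}(\d\Omega\cap B(\xi,r))$, the third assertion. Specializing to $y\in P\cap B(\xi,r/2)$ (so $w=y$) gives $\dist(y,\d\Omega)\le\beta r$; combining this with the trivial bound $\sup_{\d\Omega\cap B(\xi,r/2)}\dist(\cdot,P)\le\sup_{\d\Omega\cap B(\xi,r)}\dist(\cdot,P)=\beta r$ and dividing by $r/2$ gives $\Theta_{\d\Omega}(\xi,r/2,P)\le2\beta$, the first assertion.

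Most of this is bookkeeping with convexity and connectedness; the only quantitative point, and the step where I expect to have to be careful, is checking that both the corkscrew balls and the chord $\ell_{w}$ genuinely protrude past the slab on \emph{both} sides. This is precisely where the smallness hypothesis $\beta<\tfrac1{2C}$ enters, together with the two-sided corkscrew property and the partition $\R^{n+1}=\Omega\sqcup\d\Omega\sqcup\Omega_{\ext}$.
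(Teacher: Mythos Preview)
Your proof is correct and follows essentially the same strategy as the paper: both use the corkscrew balls to force the two caps $H^{\pm}\cap B(\xi,r)$ into $\Omega$ and $\Omega_{\ext}$ respectively, and both invoke a transversal-line argument for the projection statement. The one noteworthy difference is in how you obtain $\Theta_{\d\Omega}(\xi,r/2,P)\le 2\beta$. You reuse the chord argument---once $H^{+}\cap B(\xi,r)\subset\Omega$ and $H^{-}\cap B(\xi,r)\subset\Omega_{\ext}$ are in hand, the vertical chord through any $y\in P\cap B(\xi,r/2)$ must hit $\d\Omega$ within distance $\beta r$ of $y$---whereas the paper runs a separate contradiction argument, assuming some ball $B(x,2\ve)$ misses $\d\Omega$ and arguing that a certain set $U$ linking the two caps is connected. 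Your unified route is tidier and actually delivers the sharp constant: it gives $\sup_{P\cap B(\xi,r/2)}\dist(\cdot,\d\Omega)\le\beta r$, hence $\Theta\le 2\beta$, while the paper's argument as written only yields $\dist(x,\d\Omega)\le 2\ve$, which would give $\Theta\le 4\beta$.
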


\begin{proof}
Without loss of generality, we assume $\xi=0$, $r=1$, so $B(\xi,r)=\bB=B(0,1)$. Let $\ve=\beta_{\d\Omega}(\xi,r,P)$. If $(H^{+}\cup H^{-})\cap \bB\subset \Omega$, then 
\[\Omega_{\ext}\cap \bB\subset \{y: \dist(y,P)\leq \ve\},\] 
but since $\Omega$ has exterior corkscrews, there must be 
\[
B(y,1/C)\subset \bB \cap \Omega_{\ext}\subset \{y: \dist(y,P)\leq \ve\},\]
which is a contradiction for $\ve<\frac{1}{2C}$. We also get a contradiction if $(H^{+}\cup H^{-})\cap \bB\subset \Omega_{\ext}$, and so $H^{\pm}\cap \bB$ must be in two different components. Assume $H^{+}\cap \bB \subset \Omega$ and $H^{-}\subset \Omega_{\ext}$. The last part of the lemma now follows from this, since for any $y\in \pi_{P}(B(\xi,r))$, the line $\pi_{P}^{-1}(y)$ must pass through both $H^{\pm}$, and thus it must intersect $\d\Omega$. 

To prove \eqn{t<b} it suffices to show that if $x\in \frac{1}{2}\bB\cap P$, then $\dist(x,\d\Omega)\leq 2\ve$. Suppose there is { $x\in \frac{1}{2}\bB\cap P$} so that $B(x,2\ve)\subset (\d\Omega)^{c}$. Then the set
\[
U=\bB\cap B(x,2\ve )^{c} \cap \{y:\dist(y,P)> \ve\}\]
is a connected open subset of $(\d\Omega)^{c}$, and hence $U\subset \Omega$ or $U\subset \Omega_{\ext}$. Without loss of generality, we can assume the former case. Then 
\[\Omega_{\ext}\cap \bB\subset \{y:\dist(y,P)\leq \ve\}\cup B(x,2\ve).\] 
But by the exterior corkscrew condition, there is $B(y,1/C)\subset \bB\cap \Omega_{\ext}$, which is impossible if $\ve<\frac{1}{2C}$. 
\end{proof}


The following definition comes from \cite{Kenig-Toro3}, and it is a variant of one that appears in \cite{Alt-Caffarelli}. 
\begin{definition}
Let $\Omega\subset \R^{n+1}$ be an NTA domain. Let $x_{0}\in \d\Omega$, $\rho>0$, $\sigma_{+},\sigma\in (0,1)$, $\nu\in \bS^{n}$, and $v$ be the Green function with pole at infinity. We say $v\in F(\sigma_{+},\sigma)$ in $B(x_0,\rho)$ in the direction $\nu\in \bS^{n}$ if, for all $x\in B(x_{0},\rho)$,
\begin{equation}\label{e:fss1}
v(x)=0 \quad \mbox{ if }\;(x-x_{0})\cdot \nu \geq \sigma_{+}\rho
\end{equation}
and
\begin{equation}\label{e:fss2}
v(x)\geq -(x-x_{0})\cdot \nu -\sigma \rho\quad \mbox{ if }\;(x-x_{0})\cdot \nu \leq -\sigma \rho.
\end{equation}
\end{definition}

Observe that $v\equiv 0$ exactly on $\Omega^{c}$ and $v>0$ exactly on $\Omega$, and so 
\begin{equation}\label{e:sigmabeta}
\mbox{$v\in F(\sigma,\sigma)$ in direction $\nu$ in $B(x_{0},\rho)$ implies $\beta_{\d\Omega}(x_{0},\rho)\leq \sigma$.}
\end{equation} Indeed, assume $x_{0}=0$, $\rho=1$, and note that  by \eqn{fss1}, since $v=0$ only when $\Omega^{c}$, we have that for $x\in B(x_{0},\rho)$ that 
\[
\{x\in \bB: x\cdot \nu \geq \sigma \}\subseteq \Omega^{c}.\]
By \eqn{fss2}, if $x\cdot \nu<-\sigma \rho$, then
\[
v(x)\geq -x\cdot \nu-\sigma >0\]
and
 since $v(x)>0$ only when $x\in \Omega$
\[
\{x\in \bB: x\cdot \nu < -\sigma \}\subseteq \Omega.\]
Since $v$ is continuous, we thus have 
\[
\beta_{\d\Omega}(0, 1)<\sigma.\]

\begin{lemma}\label{l:ktlemma1}
Let $\Omega$ be a two-sided NTA domain and $v$ the Green function with pole at infinity. 
Let $x_{0}\in \partial\Omega$, $\rho,\sigma>0$, and $\nu\in \mathbb{S}^{n}$. If $v\in F(\sigma,1)$ in $B(x_{0},\rho)$ in the direction $\nu$, then $v\in F(2\sigma,C\sigma)$ in $B(x_0 ,\rho/2)$ in the same direction, where $C=C(n)$.
\end{lemma}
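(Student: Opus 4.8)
The plan is to carry out the flatness--improvement argument of Alt and Caffarelli, \cite[Lemma 4.2]{Alt-Caffarelli}; in the present generality (two-sided NTA rather than Reifenberg flat) the only change is that Reifenberg flatness is replaced by the corkscrew conditions and AD-regularity, which is harmless because the argument only uses the corkscrew conditions, Harnack's inequality, non-degeneracy, and the bound $|\nabla v|\le1$. Recall that under the running hypotheses the extension of $v$ by zero outside $\Omega$ is a weak solution of $\Delta v=\HH^{n}(\d\{v>0\}\cap\cdot)$ in the sense of Definition \ref{def:wsol} (this is Lemma \ref{lem:bup=vsol}, which uses $|\nabla v|\le1$ from Lemma \ref{lemgradgreen1}): thus $v$ is harmonic and positive in $\Omega$, vanishes on $\d\Omega$, satisfies $|\nabla v|\le1$, and is non-degenerate, $r^{-n-1}\int_{\d B(\zeta,r)}v\,d\HH^{n}\approx1$ for $\zeta\in\d\Omega$ and small $r$. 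After a translation and dilation we may take $x_{0}=0$, $\rho=1$, $\nu=e_{n+1}$, and write $x=(x',x_{n+1})$; we may also assume $\sigma$ is at most a dimensional constant, since for larger $\sigma$ a suitably large $C$ makes \eqref{e:fss2} in $B(0,\tfrac12)$ vacuous.

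The inclusion \eqref{e:fss1} for $B(0,\tfrac12)$ is immediate: if $x\in B(0,\tfrac12)$ and $x_{n+1}\ge 2\sigma\cdot\tfrac12=\sigma$ then $x\in\bB$ with $x_{n+1}\ge\sigma$, so $v(x)=0$ because $v\in F(\sigma,1)$ in $\bB$. Hence it suffices to find a dimensional constant $C$ such that
\[
v(x)\ \ge\ -x_{n+1}-C\sigma\qquad\text{whenever}\ x\in B(0,\tfrac12)\ \text{and}\ x_{n+1}\le -C\sigma ,
\]
since this, together with the vanishing of $v$ on $\{x_{n+1}\ge\sigma\}\cap B(0,\tfrac12)$ just noted, is exactly the statement $v\in F(2\sigma,2C\sigma)$ in $B(0,\tfrac12)$, and $2C$ is again dimensional. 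Note first that $v=0$ on $\{x_{n+1}\ge\sigma\}\cap\bB$ forces $\Omega\cap\bB\subset\{x_{n+1}<\sigma\}$, and integrating $|\nabla v|\le1$ along the vertical segment from $x$ up to the hyperplane $\{x_{n+1}=\sigma\}$ gives the complementary upper bound $v(x)\le(\sigma-x_{n+1})^{+}$ on $B(0,\tfrac34)$. The upper bound is routine; the lower bound is the substance of the lemma.

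For the lower bound I would first use the non-degeneracy of $v$ at $0\in\d\Omega$ and $|\nabla v|\le1$ to produce a point $a\in\Omega\cap B(0,\tfrac12)$ with $v(a)\approx1$ and $d_{\Omega}(a)\gtrsim1$; by the upper bound, $a_{n+1}\le -c_{1}$ for a dimensional $c_{1}>0$. The core of the matter is then to trap the free boundary, $\d\Omega\cap B(0,\tfrac12)\subset\{-C\sigma<x_{n+1}<\sigma\}$, and to establish the linear lower bound $v(x)\ge-x_{n+1}-C\sigma$ below this slab. For the trapping one uses, besides $\Omega\cap\bB\subset\{x_{n+1}<\sigma\}$, the lower density bound for $\{v>0\}$ at free boundary points (from non-degeneracy and $|\nabla v|\le1$) and the lower density bound for $\{v=0\}=\Omega^{c}$ at free boundary points --- which here is exactly the exterior corkscrew condition of the two-sided NTA domain, and is what stands in for Reifenberg flatness in \cite{Kenig-Toro3}; with $C$ large in terms of the NTA and AD-regularity constants, a free boundary point at height below $-C\sigma$ is ruled out by these density bounds together with $\{x_{n+1}>\sigma\}\cap\bB\subset\Omega_{\ext}$ and $a\in\Omega$. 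Given the trapping, $\Omega\cap B(0,\tfrac12)$ is $O(\sigma)$-close to the half-ball $\{x_{n+1}<0\}\cap B(0,\tfrac12)$, and the unit density $\Delta v=\HH^{n}|_{\d\Omega}$ forces $v$ to be $O(\sigma)$-close there to the flat solution $(-x_{n+1})^{+}$, which has slope exactly $1$ --- this is why the slope in \eqref{e:fss2} is $1$ and not a smaller constant; the non-degeneracy value $v(a)\approx1$ is propagated by Harnack to fix the relevant harmonic part.

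I expect the quantitative linear lower bound --- obtaining coefficient exactly $1$, uniformly up to the edge of the region --- to be the main obstacle; it is the heart of \cite[Lemma 4.2]{Alt-Caffarelli}, which we follow. Combining the trapping and the linear lower bound gives $v(x)\ge -x_{n+1}-C\sigma$ for $x\in B(0,\tfrac12)$ with $x_{n+1}\le -C\sigma$, which as noted is $v\in F(2\sigma,2C\sigma)$ in $B(0,\tfrac12)$, and this completes the proof with the dimensional constant $2C$ in place of $C$.
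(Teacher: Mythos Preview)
The paper's own proof is a bare citation: it says the argument is identical to that of Lemma~0.4 in \cite{Kenig-Toro3} (together with Lemma~0.3 there), and the sole observation is that \emph{those proofs make no use of the Reifenberg flat hypothesis}.  Nothing has to be modified or substituted; the proofs carry over verbatim, which is also why the constant is $C=C(n)$ rather than depending on the NTA character.

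Your sketch is broadly in the right spirit---you correctly identify the upper bound $v\le(\sigma-x_{n+1})^{+}$ from $|\nabla v|\le 1$, and you correctly point to Alt--Caffarelli for the linear lower bound---but the narrative you add around it is misleading in two respects.  First, you assert that the exterior corkscrew condition ``stands in for Reifenberg flatness in \cite{Kenig-Toro3}'' at this step.  That is not so: Reifenberg flatness is simply absent from Lemmas~0.3--0.4 of \cite{Kenig-Toro3}; it enters only in Lemma~0.5 (the analogue of Lemma~\ref{l:ktlemma2} here), where it is used to guarantee that the projection of $\partial\Omega$ covers a disk.  So there is nothing to replace in the present lemma, and your constant should not pick up NTA/AD-regularity dependence.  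Second, your proposed route---first ``trap'' $\partial\Omega\cap B(0,\tfrac12)$ in the slab $\{-C\sigma<x_{n+1}<\sigma\}$ via density bounds, then deduce the linear lower bound---has the logic reversed, and the trapping argument you give does not work: local density of $\Omega$ and $\Omega^c$ at a putative boundary point with $\xi_{n+1}<-C\sigma$ is in no way incompatible with $\{x_{n+1}>\sigma\}\cap\bB\subset\Omega^c$ and the existence of the deep interior point $a$.  In the Alt--Caffarelli/Kenig--Toro argument the linear lower bound $v(x)\ge -x_{n+1}-C\sigma$ is obtained directly (from the upper bound $v\le\sigma-x_{n+1}$ together with the free boundary relation $\Delta v=\HH^{n}|_{\partial\Omega}$, via a test-function/mean-value computation), and the trapping is then a consequence, not an input.
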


\begin{proof}
The proof is exactly the same as in Lemma 0.4 in \cite{Kenig-Toro3}. Its proof and that of Lemma 0.3 in the same paper upon which it depends do not require the Reifenberg flat assumption and the proofs are identical.
\end{proof}

\begin{lemma}\label{l:ktlemma2}
Let $\Omega$ be a two-sided NTA domain and $v$ the Green function with pole at infinity. 
There is some $\ve_0$ small enough so that the following holds.
Let $x_0 \in \partial\Omega$, $\rho>0$ and $\nu\in \mathbb{S}^{n}$. Given $\theta\in (0,1)$, there is $\sigma_{\theta}>0$ and $\eta\in (0,1)$ so that if $0<\sigma<\sigma_{\theta}$ and $v\in F(\sigma,\sigma)$ in $B(x_0 ,\rho)$ in the direction $\nu$ and $\beta_{\d\Omega}(x_0 ,2\rho)<\ve_0$, then $v\in F(\theta\sigma,1)$ in $B(x_0 ,\eta \rho)$ in some direction $\nu'$ such that $|\nu-\nu'|<C\sigma$. 
\end{lemma}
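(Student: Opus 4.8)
The plan is to argue by contradiction and compactness, in the spirit of the flatness-improvement lemma of Alt and Caffarelli \cite{Alt-Caffarelli} and its variant in \cite{Kenig-Toro3}; the only structural change is that the Reifenberg flatness used there is replaced by the two-sided corkscrew geometry of Lemma \ref{lem8.3}. Suppose the conclusion fails for some $\theta\in(0,1)$: then there are $\sigma_k\downarrow0$, two-sided NTA domains $\Omega_k$ (with $\omega_k=\sigma_k$, as in the setting of Lemma \ref{lemac23}) with Green functions $v_k$ with pole at infinity, points $x_0^k\in\partial\Omega_k$, radii $\rho_k$, and directions $\nu_k$, such that $v_k\in F(\sigma_k,\sigma_k)$ in $B(x_0^k,\rho_k)$ in the direction $\nu_k$ and $\beta_{\partial\Omega_k}(x_0^k,2\rho_k)<\ve_0$, but $v_k\notin F(\theta\sigma_k,1)$ in $B(x_0^k,\eta\rho_k)$ in any direction $\nu'$ with $|\nu'-\nu_k|<C\sigma_k$, where $\eta=\eta(\theta,n)\in(0,1)$ will be chosen at the very end. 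Rescaling, I take $x_0^k=0$, $\rho_k=1$, $\nu_k=e_{n+1}$. By \eqn{sigmabeta}, $\beta_{\partial\Omega_k}(0,1)\le\sigma_k$, and since $\sigma_k<\ve_0<\tfrac1{2C}$, Lemma \ref{lem8.3} provides half-spaces $H_k^{\pm}$ with $H_k^{+}\cap\bB\subset\Omega_k$ and $H_k^{-}\cap\bB\subset(\Omega_k)_{\ext}$, while $\partial\Omega_k\cap\bB$ lies in the slab $\{|x_{n+1}|\le\sigma_k\}$ and projects onto $\pi(\tfrac12\bB)$. This two-sided slab structure is precisely the substitute for Reifenberg flatness that makes the Alt--Caffarelli argument go through.

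Next I would linearize. Let $d(x)=\max(-x_{n+1},0)$, which is the Green function with pole at infinity of $\{x_{n+1}<0\}$, and set $\tilde v_k=(v_k-d)/\sigma_k$ on $\Omega_k\cap\tfrac12\bB$. Since $v_k$ is $1$-Lipschitz by \eqn{u<1} and vanishes on $\{x_{n+1}\ge\sigma_k\}$, one has $v_k(x)\le\dist(x,\{x_{n+1}\ge\sigma_k\})\le d(x)+\sigma_k$, so $\tilde v_k\le1$; and \eqn{fss2} together with $v_k\ge0$ gives $\tilde v_k\ge-1$. As $\tilde v_k$ is harmonic on $\Omega_k\cap\{x_{n+1}<0\}\cap\tfrac12\bB$, a set that exhausts $\{x_{n+1}<0\}\cap\tfrac12\bB$ as $k\to\infty$ (because $\partial\Omega_k\to\{x_{n+1}=0\}$), interior estimates for bounded harmonic functions give, along a subsequence, $\tilde v_k\to w$ locally uniformly with $w$ harmonic, $|w|\le1$, and $w(0)=0$. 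The free boundary relation $|\nabla v_k|=1$ $\sigma$-a.e.\ on $\partial\Omega_k$ (Lemma \ref{lemnontang} and \eqn{w=1}), written as $|-e_{n+1}+\sigma_k\nabla\tilde v_k|=1$ on $\partial\Omega_k$, formally forces $\partial_{n+1}\tilde v_k\to0$ along $\partial\Omega_k$; passing to the limit, $w$ satisfies the homogeneous Neumann condition $\partial_{n+1}w=0$ on $\{x_{n+1}=0\}\cap\tfrac12\bB$. Hence the even reflection of $w$ is harmonic, $w$ is real-analytic near $0$, and $w(x)=a\cdot x'+O(|x|^2)$ with $|a|+\sup_{\tfrac14\bB}|\nabla^2w|\le C(n)$.

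Finally, I would transfer this back to the free boundaries. A point $x=(x',x_{n+1})\in\partial\Omega_k\cap\tfrac14\bB$ satisfies $v_k(x)=0$, hence $x_{n+1}=\sigma_k\tilde v_k(x)+o(\sigma_k)=\sigma_k\,w(x',0)+o(\sigma_k)$ uniformly --- here one uses the non-degeneracy and boundary Harnack estimates for $v_k$ down to $\partial\Omega_k$ together with the slab geometry from Lemma \ref{lem8.3}. Consequently $\partial\Omega_k\cap B(0,\eta)$ lies within $C(n)\sigma_k\eta^2+o(\sigma_k)$ of the hyperplane $\{x_{n+1}=\sigma_k\,a\cdot x'\}$, whose unit normal $\nu'$ satisfies $|\nu'-e_{n+1}|\le C(n)\sigma_k$. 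Choosing $\eta$ so that $C(n)\eta<\tfrac12\theta$, and then $\sigma_\theta$ so small that the $o(\sigma_k)$ term is below $\tfrac12\theta\sigma_k\eta$ whenever $\sigma_k<\sigma_\theta$, we obtain $v_k\equiv0$ on $B(0,\eta)\cap\{x\cdot\nu'\ge\theta\sigma_k\eta\}$; since the second defining inequality of $F(\theta\sigma_k,1)$ in $B(0,\eta)$ is vacuous, this says $v_k\in F(\theta\sigma_k,1)$ in $B(0,\eta)$ in the direction $\nu'$, contradicting the choice of the sequence. I expect the main obstacle to be the rigorous justification of the two limiting statements --- the Neumann condition for $w$ and the convergence of the $\sigma_k^{-1}$-rescaled free boundaries to the graph of $w(\cdot,0)$ --- since $v_k$ cannot simply be differentiated on the moving set $\partial\Omega_k$; as in \cite{Alt-Caffarelli} these are obtained via comparison with explicit harmonic barriers in half-balls, with the two-sided corkscrew condition (Lemma \ref{lem8.3}) doing the work that Reifenberg flatness does there.
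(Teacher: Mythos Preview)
Your approach is correct and is essentially the same as the paper's: the paper does not reprove the lemma but simply cites Lemma~0.5 of \cite{Kenig-Toro3} (the Alt--Caffarelli non-homogeneous blow-up you sketch), observing that the only place Reifenberg flatness enters in \cite{Kenig-Toro3} is to guarantee that the projection of $\partial\Omega$ in a cylinder fills the base disk (right below their (0.69)), and that this is supplied here by the hypothesis $\beta_{\partial\Omega}(x_0,2\rho)<\ve_0$ together with Lemma~\ref{lem8.3}. Your use of Lemma~\ref{lem8.3} in the first paragraph is exactly this substitution, and the barrier arguments you anticipate needing for the Neumann condition and the convergence of rescaled free boundaries are indeed the ones carried out in \cite{Alt-Caffarelli} and \cite{Kenig-Toro3}.
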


\begin{proof}
Again, the proof is exactly the same as that of Lemma 0.5 in \cite{Kenig-Toro3}. The only time Kenig and Toro use the Reifenberg flatness assumption is to show that the intersection of a cylinder C with the boundary (with axis passing through $Q_0$) has projection in the direction of the cylinder equal to the base of the cylinder (i.e. a ball), see right below equation (0.69) in \cite{Kenig-Toro3}. However, we can just replace this with the assumption that $\beta_{\d\Omega}(x_0 ,2\rho)<\ve_0$ is small and then apply Lemma \ref{lem8.3}. 
\end{proof}
 
 \begin{proof}[Proof of Lemma \ref{lemac23}]
Let $\theta'\in (0,1/2)$, $\delta_{0}\in (0,\sigma_{n,\theta'}/(8+2C))$. Note that \eqn{bt<d} implies that for $r>1$, there is a plane $P_{r}$ so that 
\begin{equation}\label{e:assumetheta}
\beta_{\d\Omega}(0,r,P_{r})\leq \Theta_{\d\Omega}(0,r,P_{r})\leq \delta_{0}.
\end{equation} Let $L_{r}=P_{2r}-\pi_{P_{2r}}(0)$ and let $\nu_{r}\in \bS^{n}$ be a unit vector orthogonal to $L_{r}$ so that $r\nu_{r}/2 \in \Omega^{c}$. Then 
\[
\{x\in B(0,r): x\cdot \nu_{r}> \delta_{0} r\}
\subseteq \{x\in B(0,r): \dist(x,L_{r})>\delta_{0}r\} \subseteq (\d\Omega)^{c}.\]
Since $\{x\in B(0,r): x\cdot \nu_{r}> \delta_{0} r\}$ and $\Omega^{c}$ are connected and $r\nu_{r}/2 $ is in their intersection, we actually have
\[
\{x\in B(0,r): x\cdot \nu_{r}> \delta_{0} r\}\subseteq \Omega^{c}.\]
Hence, $v(x)=0$ for $x\in B(0,r)$ such that $x\cdot \nu>\delta_{0} r$. Furthermore, we trivially have
\[
\{x\in B(0,r): x\cdot \nu_{r}>r\}=\varnothing\]
and thus $v\in F(\delta_{0},1)$. \Lemma{ktlemma1} implies $v\in F(2\delta_{0},C\delta_{0})$ in $\frac{r}{2}\bB$ in the same direction, and so $v\in F(\delta,\delta)$  in $\frac{r}{2}\bB$ where $\delta=\max\{2,C\}$. Let $\theta'\in (0,1)$. By \Lemma{ktlemma2} and \eqn{assumetheta}, there is $\eta'\in (0,1)$ (depending only on $\theta'$) so that $v\in F(\theta' \delta,1)$ in $\frac{\eta' r}{2}\bB$. Again, by \Lemma{ktlemma1}, we have $v\in F(2\theta'\delta,C\theta'\delta)$ in $\frac{\eta' r}{4}\bB$, and hence $v\in F(\theta \delta,\theta\delta)$ in $\eta r\bB$ where $\theta=\max\{2\theta',C\theta'\}$ and $\eta=\frac{\eta'}{4}$ in the direction of some vector $\nu\in\bS^{n}$. By \eqn{sigmabeta}, we have
\[
\beta_{\d\Omega}(0,\eta r)<\theta \delta.\]

 
Iterating, we get that for all $m\in \mathbb{N}$ that

\begin{equation}\label{e:iterate1}
v\in F(\theta^{m}\delta,\theta^{m}\delta)\quad\, \mbox{in }\;\eta^{m}r\bB.
\end{equation}
and
\[
\Theta_{\d\Omega}(0,\eta^{m}r/2)\leq 2\beta_{\d\Omega}(0,\eta^{m}r) \leq \theta^{m} \delta.\]
Let $1<s\ll r$ and pick $m$ so that $\eta^{m+1}r\leq s<\eta^{m}r$. Then this implies
\[\Theta_{\d\Omega}(0,s/2)
\leq 2\Theta_{\d\Omega}(0,\eta^{m}r/2) 
\leq 2\theta^{m} \delta
=2\eta^{\frac{\log \theta}{\log \eta} m} \delta 
\leq 2(\eta^{-1}sr^{-1})^{\frac{\log \theta}{\log \eta} }\delta.\]
Thus, by sending $r\rightarrow \infty$, we get $\Theta_{\d\Omega}(0,s/2)=0$. Since this holds for every $s>1$, we have that $\d\Omega$ is equal to an $n$-plane, and since $\Omega$ is connected, it must be a halfspace. 
 \end{proof}

\vvv


\section{The proof of Theorem \ref{teo}}

Our arguments are very similar to the ones in \cite{Kenig-Toro2}. The only difference is that in our pseudo-blow-ups we allow the
points $x_i$ to escape to $\infty$. In this way, we are able to show that the outer unit normal $\vec n$ belongs to
$\textrm{VMO}(\sigma)$, not only to $\VMO_{loc}(\sigma)$. For the reader's convenience, we replicate the arguments of  \cite{Kenig-Toro2}
here.

Let 
\[
\ell=\lim_{r\rightarrow 0} \sup_{x\in \d\Omega} \|\vec{n}\|_{*}(B(x,r)).\]
We will show $\ell=0$. Let $x_{i}\in \d\Omega$ and $r_{i}\downarrow 0$ be such that 
\[
\lim_{i\rightarrow \infty}\left(\avint_{B(x_{i},r_{i})} |\vec{n}-\vec{n}_{B(x_{i},r_{i})}|^{2}\,d\sigma\right)^{\frac{1}{2}}=\ell.\]
Let $\Omega_{i}=\frac{1}{r_{i}}(\Omega-x_{i})$ and $u_{i}^{p_{i}},\omega_{i}^{p_{i}}$ be as in Theorem \ref{thm:blowup}. By this theorem, we can pass to a subsequence so that all these quantities converge to some $\Omega_{\infty}$, $u_{\infty}$, and $\omega_{\infty}$. By Lemma \ref{lem:sigma}, $\sigma_{i}$ also converges to $\sigma_{\infty}=\HH^{n}|_{\d\Omega_{\infty}}$. By Lemma \ref{lemhalf}, $\Omega_{\infty}$ is a half space (suppose it is $\mathbb{R}^{n+1}_{+}$) and $\omega_{\infty}=\HH^{n}|_{\mathbb{R}^{n}}$. For $\phi$ a smooth, nonnegative, and compactly supported function with $\phi\geq \chi_\bB$, and $\vec{n}_{i}$ the outer unit normal to $\d\Omega_{i}$, we thus have
\begin{align*}
\lim_{i\rightarrow\infty}   \int_{\d\Omega_{i}\cap \bB}|\vec{n}_{i}  + e_{n+1}|^{2}\,d\sigma_{i}
&\leq \lim_{i\rightarrow\infty} \int_{\d\Omega_{i}} \phi \,|\vec{n}_{i}+e_{n+1}|^{2}\,d\sigma_{i}\\
& =\lim_{i\rightarrow\infty}\left( { 2}  \int_{\d\Omega_{i}} \phi\, d\sigma_{i}+ 2\int_{\d\Omega_{i}} \phi\, \vec{n}_{i}\cdot e_{n+1}\,d\sigma_{i}\right)\\
& 
 = { 2} \int_{\mathbb{R}^{n}} \phi\, d\sigma_\infty + 2\lim_{i\rightarrow\infty}\int_{\Omega_{i}} \textrm{div}(\phi\, e_{n+1})\,dm \\
& = { 2}  \int_{\mathbb{R}^{n}} \phi\, d\sigma_\infty+ 2\int_{\mathbb{R}^{n+1}_{+}} \textrm{div}(\phi \,e_{n+1})\, dm\\
& ={ 2}  \int_{\mathbb{R}^{n}} \phi \,d\sigma_\infty- { 2}  \int_{\mathbb{R}^{n}} \phi\,  e_{n+1}\cdot e_{n+1}\, d\sigma_\infty
=0
\end{align*}
and hence
\[
\ell 
=\lim_{i\rightarrow \infty}\left(\avint_{B(x_{i},r_{i})} |\vec{n}-\vec{n}_{B(x_{i},r_{i})}|^{2}d\sigma\right)^{\frac{1}{2}}
\leq 2 \lim_{i\rightarrow \infty}\left(\avint_{B(x_{i},r_{i})} |\vec{n}+e_{n+1}|^{2}d\sigma\right)^{\frac{1}{2}}=0.\]
\fiproof

\vv

\begin{rem}\label{rem***}
The same arguments as above  show that Theorem A by Kenig and Toro is valid as stated in the Introduction.
That is, under the assumptions of Theorem A, one deduces that $\vec n\in \textrm{VMO}(\sigma)$, instead of the weaker statement $\vec n\in {\rm VMO}_{loc}(\sigma)$ proven in \cite{Kenig-Toro2}.
\end{rem}

\vvv

\section{Counterexample for $\R^d$, $d \geq 4$}\label{sec:counter}

In this section we show that, for all $d\geq4$, there exists a two-sided chord-arc unbounded domain $\Omega \subset \R^d$ for which the Poisson kernel with pole at infinity is constant and such that the outer unit normal is not in $\textrm{VMO}(\sigma)$.
Indeed, in \cite[Example 1]{Hong}, Hong constructed $u \in C(\R^4)$ such that $u \geq 0$;  $u(rx)=r u(x), r>0$; $\Delta u=0$ in $\Gamma = \{u >0\}$; $\d \Gamma \setminus \{0\}$ is smooth; $\frac{\d u}{\d \vec n} = -1$, where $\vec n$ is the outward unit normal on $\Gamma$ and
$u$ is singular, i.e., $u \not = x_1^+$ (modulo rotations). We describe his example in some detail below.
 
Since $u$ is homogeneous of degree one, it is determined by its values on the unit sphere $\mathbb S^3 \subset \R^4$. Further,
$u$ solves the following overdetermined first eigenvalue problem on $\mathbb S^{d-1}$, for $d=4$:
\begin{equation}\label{eq:overdet}
\begin{cases} 
\Delta_{\mathbb S^{d-1}} u +(d-1) u =0 \quad \textup{and} \quad u>0, & \textup{in}\,\, \Omega:= \Gamma \cap \mathbb{S}^{d-1}; \\ 
\dfrac{\d u}{\d \vec n}=-1 \quad  \textup{and} \quad  u=0, & \textup{in}\,\, \d \Omega:= \d \Gamma \cap \mathbb{S}^{d-1};\\
u \equiv 0, &\textup{in}\,\, \Omega^c.
\end{cases}
\end{equation}

To be more precise, let us consider in $\mathbb S^3 \subset \R^4$ the coordinates
\begin{equation}\label{eq:coord}
\begin{cases} 
x_1=\cos \theta \cos \phi, &x_2= \cos \theta \sin \phi \\ 
x_3=\sin \theta \cos \psi, &x_4= \sin \theta \sin \psi,
\end{cases}
\end{equation}
where $\theta \in [0, \pi/2]$ and $\phi, \psi \in [0, 2\pi]$. Let $u(\theta, \phi, \psi)= \tau f(\theta)$, where $\tau>0$ and  $f$ a sufficiently nice function. To find $u$ that satisfies \eqref{eq:overdet} it is enough to solve the following ODE:
\[
\begin{cases} 
(\sin \theta \cos \theta f')'+ \sin \theta \cos \theta f =0, \quad \theta \in (0, \pi/2) \\ 
f(0)=1, \quad f'(0)=0.
\end{cases}
\] 
Then it is shown in \cite{Hong} that there exists $\theta_0 \in (0, \pi/2)$ such that $f(\theta_0)=0$,  $f'(\theta_0)<0$ and $f'(\theta)>0$ for all $\theta \in (0, \theta_0)$. If $u$ is defined on $\mathbb S^3$ by $u(\theta, \phi, \psi) = \frac{-1}{f'(\theta_0)} f(\theta)$, for all $\theta \in [0, \theta_0)$ and $u \equiv 0$ in $[\theta_0, \pi/2]$, then $v(x)= v(r \xi)= r u(\xi)$, for $r>0$ and $\xi \in \mathbb S^3$, is the solution to the one-phase free boundary problem we are after.

The above mentioned construction provides us with a domain for which  Theorem \ref{teo} does not hold. Indeed, let 
\begin{align*}
\Omega:&=\{ x \in \R^4: x = r \xi, \,\, \textup{for some}\,\, \xi \in \mathbb S^3 \,\,\textup{satisfying}\,\, \eqref{eq:coord} \,\, \textup{for}\,\, \theta \in [0, \theta_0)\}\\
&= \{v>0\},
\end{align*} 
whose boundary is given by all points $x \in \R^4$ so that $x = r\, \xi$ for some $r>0$ and $\xi \in \mathbb{S}^3$ that satisfies \eqref{eq:coord} for $\theta=\theta_0$. Remark here that as $v$ is homogeneous of degree one function and $\not \equiv x_1^+$ (under rotation), $\Omega$ is a cone in $\R^4$ but not a half-space. Thus, $\Omega$ is not a Reifenberg flat domain with vanishing constant, which infers that the outward unit normal $\vec n \not \in \textrm{VMO}(\d \Omega)$. Moreover, as the Poisson kernel $h=-\frac{\d u}{\d \vec n}=1$, it is clear that $\log h \in \textrm{VMO}$. Therefore, it is enough to show that $\Omega$ is a 2-sided chord-arc domain.

To this end, notice that every $x \in \d \Omega$ satisfies the equation $x_1^2+x_2^2 = \cos^2\theta_0$
$x_3^2+x_4^2 = \sin^2\theta_0$ while, for $x\in\Omega$,
$$
x_1^2+x_2^2 = \cos^2\theta > \cos^2 \theta_0 \quad \mbox{ and } \quad x_3^2+x_4^2 = \sin^2\theta < \sin^2 \theta_0. 
$$
So $\Omega$ coincides with the set of those points $x\in\R^4$ such that
\begin{equation}\label{eq:cntrxOM}
x_1^2+x_2^2 >\bigl(x_3^2+x_4^2\bigr) \cot^2 \theta_0.
\end{equation}
Therefore, $\Omega$ is bi-lipschitz equivalent to the domain $\{x \in \R^4: x_1^2+x_2^2 >x_3^2+x_4^2\}$, which is a well-known 2-sided chord-arc domain. The AD-regularity is easier to see as the boundary is locally a Lipschitz graph away from the origin by the implicit function theorem, so it is locally 
AD-regular, and the fact that it is a cone easily gives that it is globally Ahlfors regular. Hence, $\Omega$ is also a two-sided chord-arc domain, which finishes our proof in $\R^4$. 
\vv

If we set $D:= \Omega \otimes  \R^{d-4} \subset \R^{d}$, where $\Omega \subset \R^4$ is the domain just constructed, then $D$ is a 2-sided chord-arc domain in $\R^{d}$ for which the Poisson kernel is constant and such that the outer unit normal is not in $\textrm{VMO}(\sigma)$.

\end{document}